\newcommand{\ncom}{\newcommand}
\ncom{\ov}{\overline}
\ncom{\N}{\mathbb N}
\ncom{\Q}{\mathbb Q}
\ncom{\Z}{\mathbb Z}
\ncom{\F}{\mathbb{F}} 
\ncom{\C}{\mathbb{C}} 
\ncom{\p}{\mathfrak{p}} 
\ncom{\q}{\mathfrak{q}} 
\ncom{\m}{\mathfrak{m}}
\ncom{\n}{\mathfrak{n}}
\ncom{\kk}{\mathbf{k}}
\ncom{\mL}{\mathcal{L}} 
\ncom{\mN}{\mathcal{N}} 
\ncom{\f}{\mathcal{F}}
\ncom{\g}{\mathcal{T}}
\ncom{\I}{{\mathcal{I}}}
\ncom{\J}{\mathcal{J}} 
\ncom{\R}{\mathcal{R}}
\ncom{\h}{{\bf h}}
\DeclareMathOperator{\depth}{depth}
\DeclareMathOperator{\chr}{char}
\DeclareMathOperator{\ini}{in}
\DeclareMathOperator{\gr}{gr}
\DeclareMathOperator{\HH}{H}
\def\thebibliography#1{\section*{References}
	\list{[\arabic{enumi}]}{\settowidth \labelwidth{[#1]} \leftmargin
		\labelwidth \advance \leftmargin \labelsep \usecounter{enumi}}
	\def\newblock{\hskip .11em plus .33em minus .07em} \sloppy
	\clubpenalty 4000 \widowpenalty 4000 \sfcode`\.=1000 \relax}
\newtheorem{Theorem}{Theorem}[section]
\newtheorem{Lemma}[Theorem]{Lemma}
\newtheorem{Corollary}[Theorem]{Corollary}
\newtheorem{Proposition}[Theorem]{Proposition}
\newtheorem{Remark}[Theorem]{Remark}
\newtheorem{Example}[Theorem]{Example}
\newtheorem{Definition}[Theorem]{Definition}
\numberwithin{equation}{section}
\begin{document}
	
\pagenumbering{arabic}  
\title[]{Tight Closure of powers of ideals and \\ [2mm] Tight Hilbert Polynomials}
\thanks{The first author is supported by a UGC fellowship, Govt. of India}
\thanks{{\it Key words and phrases}: Tight closure of powers of ideals, Hilbert polynomial, tight Hilbert polynomial, diagonal hypersurface ring, Stanley-Reisner ring, the $HI_p$ condition}
\thanks{{\it 2010 AMS Mathematics Subject Classification:} Primary: 13A35, 13D40, Secondary: 13F55, 13H10.}

\author[]{Kriti Goel}
\author[]{Vivek Mukundan}
\author[]{J. K. Verma}
\address{Indian Institute of Technology Bombay, Mumbai, INDIA 400076}
\address{University of Virginia, Charlottesville, VA 22904, USA}
\email{kriti@math.iitb.ac.in}
\email{vm6y@eservices.virginia.edu}
\email{jkv@math.iitb.ac.in}

\maketitle

\thispagestyle{empty}

\begin{abstract}
Let $(R,\m)$ be an analytically unramified local ring of positive prime characteristic $p.$ For an ideal $I$, let $I^*$ denote its tight closure. We introduce  the {\em tight Hilbert function }$H^*_I(n)=\ell(R/(I^n)^*)$ and the corresponding {\em tight Hilbert polynomial } $P_I^*(n)$, where $I$ is an $\m$-primary ideal. It is proved that $F$-rationality can be detected by the vanishing of the first coefficient of $P_I^*(n).$ We find the tight Hilbert polynomial of certain parameter ideals in hypersurface rings and Stanley-Reisner rings of simplicial complexes.
\end{abstract}

\section{Introduction}

Let $(R,\m)$ be a $d$-dimensional Noetherian local ring and $I$ be an $\m$-primary ideal. Let $\ov{I}$ be the integral closure of $I.$ The Rees algebra of $I$ is denoted by $\mathcal R(I)=\oplus_{n\in \Z}I^nt^n$ where $t$ is an indeterminate. The integral closure  of $\mathcal R(I)$ in $R[t,t^{-1}]$ is $\ov{\mathcal R}(I)=\oplus_{n\in \Z} \overline{I^n}t^n.$ We use  $\ell(M)$ to denote the  length  of an $R$-module $M.$ David Rees \cite{reesAUrings} showed that if $R$ is analytically unramified then $\ov{\mathcal R}(I)$ is a finite module over $\mathcal R(I).$ This implies that for all large $n,$ the {\em normal Hilbert function of $I,$} $\ov{H}_I(n)=\ell(R/\ov{I^n})$ is a polynomial of degree $d.$  This is called the {\em normal Hilbert polynomial of $I$} and it  is denoted by $\ov{P}_I(n).$  We write
\[ \ov{P}_I(n)=\ov{e}_0(I)\binom{n+d-1}{d}-\ov{e}_1(I)\binom{n+d-2}{d-1} +\dots+(-1)^d\ov{e}_d(I).\]
Here $\ov{e}_0(I)=e(I),$  the multiplicity of $I$ and the coefficients $\ov{e}_i(I)$ for $i=0,1,\dots, d$ are called the {\em normal Hilbert coefficients of $I.$}
 The  normal Hilbert coefficients  play an important role in the study of singularities of algebraic varieties. Rees \cite{rees1981hilbert} proved that if $(R,\m)$ is a $2$-dimensional analytically unramified normal  local ring then it is pseudo-rational if and only if $\ov{e}_2(I)=0$ for all $\m$-primary ideals $I$ of $R.$ 

Shiroh Itoh \cite{itoh92} proved that if $R$ is Cohen-Macaulay and analytically unramified, then $\ov{e}_3(I)\geq 0.$ Moreover, if $R$ is Gorenstein and $I$ is generated by a regular sequence so that $\ov{I}=\m$, then $\ov{e}_3(I)=0$ if and only if $\ov{r}(I)\leq 2.$ Here $\ov{r}(I)=\min\{n\mid I \ov{I^n}=\ov{I^{n+1}}\}.$ The integer $\ov{r}(I)$ is called the {\em normal reduction number of $I.$ }
Itoh \cite{itoh92} proved that if $\ov{r}(I)\leq 2$, then $\ov{\mathcal R}(I)$ is Cohen-Macaulay. He conjectured that if $\ov{e}_3(I)=0$, then $\ov{r}(I)\leq 2.$  This is known in some cases \cite{cpr}, \cite{kuma}, however, it remains open in the general case.

Now let $R$ have prime characteristic $p> 0.$ Let $I^*$ denote the tight closure of $I.$ Then $I\subset I^*\subset \ov{I}.$ If $(R,\m)$ is analytically unramified, then the filtration $\{(I^n)^*\}$ is also $I$-admissible, i.e., the Rees algebra $\mathcal R^*(I)=\oplus_{n\in \Z}(I^n)^* t^n$ is a finite $\mathcal R(I)$-module. Therefore, the 
{\em tight Hilbert function of $I,$} namely, $H^*_I(n)=\ell(R/(I^n)^*)$ is a polynomial of degree $d$ for large $n.$ We call  this polynomial  as the {\em tight Hilbert polynomial of $I$ } and denote it by $P^*_I(n).$ We write the tight Hilbert polynomial as

\[ P^*_I(n)=e(I)\binom{n+d-1}{d}-e^*_1(I)\binom{n+d-2}{d-1}+\dots+(-1)^de^*_d(I).\]

One of the main objectives of this paper is to
initiate a study of the tight Hilbert polynomial since preliminary results obtained in this paper indicate its close connection  with $F$-singularities. Recall that $R$ is called an $F$-rational local ring if ideals of principal class in $R$ are tightly closed.  We show that
if $R$ is a Cohen-Macaulay analytically unramified local ring, then $R$ is $F$-rational if and only if $e_1^*(I)=0$ for some parameter ideal $I$ of $R.$ It is, therefore, reasonable to expect that the other coefficients of the tight Hilbert polynomial of $I$ may have a close connection with the properties of $R.$ We, therefore, calculate the tight Hilbert polynomial of parameter ideals in certain hypersurface rings and Stanley-Reisner rings of simplicial complexes.

Here is a summary of the contents of the paper. In section 2, we set the notation and recall certain definitions and results needed in later sections.  In section 3, we introduce the condition $HI_p$.

\begin{Definition}{\rm
	Let $R$ be a $d$-dimensional  Noetherian local ring. Let $I$ be an ideal generated by an $R$-regular sequence. An $I$-admissible filtration $\mathcal{F}=\{I_n\}$ is said to satisfy \emph{the condition $HI_{p}$} if for all $n \geq p$, 
	\begin{align*}
	I_{n+1}\cap I^{n-p}=I_{p+1}I^{n-p}.
	\end{align*}
}\end{Definition}
Let $\R(\f)$ denote the Rees algebra $\oplus_{n\in \Z}I_nt^n$ of the filtration $\f=\{I_n\}$. Suppose $I=(x_1,\dots,x_d)$, where $x_1,\dots,x_d$ is an $R$-regular sequence and let $J=(t^{-1},x_1t,\dots,x_dt)$ be an $\R(I)$-ideal. Itoh proved that for the  $I$-admissible filtration $\f=\{\ov{I^n}\}$, vanishing of $\HH^2_J(\R(\f))_0$ implies the intersection theorem $\ov{I^{n+1}}\cap I^n = I^n\ov{I}$, for all $n\ge 1$. Let $r(\f)$ denote the reduction number of the filtration $\f$. We find a formula of the Hilbert-Samuel polynomial of an $I$-admissible filtration $\f$ which satisfies the condition $HI_p$, for all $p\le r(\f)-2$.
\begin{Theorem}
	Let $R$ be a $d$-dimensional Cohen-Macaulay Noetherian local ring and $I$ be an ideal generated by an $R$-regular sequence. Let $\f=\{I_n\}$ be an $I$-admissible filtration. If $\f$ satisfies the condition $HI_p$, for $p\leq r=r(\mathcal{F})-2$, then for all $i=1,\dots, d$,
	\begin{align*}
	e_i(\f)=\sum_{k=i-1}^{\infty}\binom{k}{i-1}\ell\left(\frac{I_{k+1}}{II_k}\right).
	\end{align*}
\end{Theorem}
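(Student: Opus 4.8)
The plan is to reduce the statement to a length computation for $I_{n+1}/I^{n+1}$ with $n\gg 0$, to carry it out through a single chain of ideals squeezed between $I^{n+1}$ and $I_{n+1}$, and finally to extract the coefficients $e_i(\f)$ from a binomial identity. Write $\rho=r(\f)$. Since $R$ is Cohen--Macaulay and $I=(x_1,\dots,x_d)$ is generated by a regular sequence, $\gr_I(R)\cong(R/I)[X_1,\dots,X_d]$ is a polynomial ring; hence $\ell(R/I^{n+1})=\ell(R/I)\binom{n+d}{d}$ for all $n$, and $I^m/I^{m+1}$ is a free $R/I$-module of rank $\binom{m+d-1}{d-1}$. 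Combining $\ell(R/I_n)=\ell(R/I^n)-\ell(I_n/I^n)$ with $e_0(\f)=e(I)=\ell(R/I)$ gives, for $n\gg 0$,
\[
P_\f(n)=\ell(R/I)\binom{n+d-1}{d}-\ell(I_n/I^n),
\]
so it suffices to prove that for $n\gg 0$
\[
\ell\!\left(\frac{I_{n+1}}{I^{n+1}}\right)=\sum_{k=0}^{\rho-1}\binom{n+d-1-k}{d-1}\,\ell\!\left(\frac{I_{k+1}}{II_k}\right),
\]
a finite sum since $I_{k+1}=II_k$ for $k\ge\rho$.

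Fix $n\ge\rho$. Because $I_{n+1}=I^{\,n+1-\rho}I_\rho\subseteq I^{\,n+1-\rho}$, the chain
\[
I^{n+1}=I_{n+1}\cap I^{n+1}\subseteq I_{n+1}\cap I^{n}\subseteq\cdots\subseteq I_{n+1}\cap I^{\,n+1-\rho}=I_{n+1}
\]
has exactly $\rho$ steps, and its $j$-th successive quotient embeds into $I^{\,n+1-j}/I^{\,n+2-j}$. For $1\le j\le\rho-1$ I would invoke $HI_{j-1}$ at level $n$ to get $I_{n+1}\cap I^{\,n+1-j}=I_jI^{\,n+1-j}$, and $HI_{j-2}$ at level $n$ (vacuous when $j=1$) to get $I_{n+1}\cap I^{\,n+2-j}=I_{j-1}I^{\,n+2-j}$; intersecting the first equality with $I^{\,n+2-j}$ then yields $I_jI^{\,n+1-j}\cap I^{\,n+2-j}=I_{j-1}I^{\,n+2-j}$, which identifies the $j$-th quotient with its image $\overline{I_j}\cdot\bigl(I^{\,n+1-j}/I^{\,n+2-j}\bigr)$, where $\overline{I_j}=(I_j+I)/I\subseteq R/I$. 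As $I^{\,n+1-j}/I^{\,n+2-j}$ is $R/I$-free of rank $\binom{n+d-j}{d-1}$, this image has length $\binom{n+d-j}{d-1}\ell(\overline{I_j})$; and $HI_{j-2}$ applied at level $j-1$ gives $I_j\cap I=II_{j-1}$, so $\ell(\overline{I_j})=\ell\bigl(I_j/(I_j\cap I)\bigr)=\ell(I_j/II_{j-1})$. Hence the $j$-th step contributes $\binom{n+d-j}{d-1}\ell(I_j/II_{j-1})$, which is the $k=j-1$ term of the target sum.

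The top step $j=\rho$ is the delicate one, and I expect it to be the main obstacle: there $I_\rho\neq II_{\rho-1}$, so the filtration has not ``stabilised'' and the clean identifications above are unavailable. Here the plan is to use $HI_{\rho-2}$ at level $n$ to write $I_{n+1}\cap I^{\,n+2-\rho}=I_{\rho-1}I^{\,n+2-\rho}=I^{\,n+1-\rho}(II_{\rho-1})$, combine it with $I_{n+1}=I^{\,n+1-\rho}I_\rho$ and $I_{n+1}\cap I^{\,n+1-\rho}=I_{n+1}$ to deduce $I^{\,n+1-\rho}I_\rho\cap I^{\,n+2-\rho}=I^{\,n+1-\rho}(II_{\rho-1})$, and thereby identify the last quotient with $\overline{I_\rho}\cdot\bigl(I^{\,n+1-\rho}/I^{\,n+2-\rho}\bigr)$, of length $\binom{n+d-\rho}{d-1}\ell(\overline{I_\rho})$; then $HI_{\rho-2}$ at level $\rho-1$ gives $I_\rho\cap I=II_{\rho-1}$, so $\ell(\overline{I_\rho})=\ell(I_\rho/II_{\rho-1})$. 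Adding the $\rho$ contributions establishes the displayed formula for $\ell(I_{n+1}/I^{n+1})$. (When $\rho\le1$ there are no nontrivial constraints and the computation is automatic.)

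It remains to substitute the formula (with $n$ replaced by $n-1$) into the expression for $P_\f(n)$, obtaining
\[
P_\f(n)=\ell(R/I)\binom{n+d-1}{d}-\sum_{k\ge0}\binom{n+d-2-k}{d-1}\ell(I_{k+1}/II_k),
\]
to expand each summand using the identity
\[
\binom{n+d-2-k}{d-1}=\sum_{i=1}^{d}(-1)^{i-1}\binom{k}{i-1}\binom{n+d-1-i}{d-i}
\]
(which follows by a short induction on $k$ from Pascal's rule), and finally to compare coefficients with $P_\f(n)=\sum_{i=0}^{d}(-1)^{i}e_i(\f)\binom{n+d-1-i}{d-i}$. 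This yields $e_i(\f)=\sum_{k\ge i-1}\binom{k}{i-1}\ell(I_{k+1}/II_k)$ for $i=1,\dots,d$, as claimed.
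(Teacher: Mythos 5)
Your proof is correct and follows essentially the same route as the paper: under the $HI_p$ hypotheses the intersection chain $I_{n+1}\cap I^{n+1-j}$ is precisely the product chain $I_jI^{n+1-j}$ that the paper telescopes, your per-step length computation reproduces the paper's isomorphism lemma ($I^{n-k}I_{k+1}/I^{n-k+1}I_k\cong I^{n-k}/I^{n-k+1}\otimes_R I_{k+1}/II_k$), and your concluding binomial expansion is exactly the paper's combinatorial lemma. One small remark: the top step $j=r(\mathcal{F})$ is not actually delicate, since $HI_{r(\mathcal{F})-1}$ holds automatically once $r(\mathcal{F})$ is the reduction number (this is recorded as a remark in the paper), though the workaround you give is also valid.
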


Let $(R,\m)$ be a $d$-dimensional analytically unramified local ring and let $I$ be an $\m$-primary ideal. In section 4, we prove that for large $n$, $H_I^*(n) = \ell(R/(I^n)^*)$ is a polynomial, $P_I^*(n)$, of degree $d$ and with coefficients in $\Q$.  We shall prove:
\begin{Theorem}
	Let $(R,\m)$ be a $d$-dimensional analytically unramified Cohen-Macaulay local ring with positive prime characteristic $p$. Then $R$ is $F$-rational if and only if $e_1^*(I)=0,$ for some parameter ideal $I$.
\end{Theorem}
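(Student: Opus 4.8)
The plan is to study the filtration $\f^{*}=\{(I^{n})^{*}\}$, where $I=(x_{1},\dots,x_{d})$ is a system of parameters, hence an $R$-regular sequence. First I would record the basic features of $\f^{*}$: it is multiplicative, since $(I^{m})^{*}(I^{n})^{*}\subseteq (I^{m+n})^{*}$ for tight closure, and it is $I$-admissible because $R$ is analytically unramified (so $\R^{*}(I)$ is module-finite over $\R(I)$); thus its Hilbert polynomial is $P^{*}_{I}(n)$ and $e_{i}(\f^{*})=e^{*}_{i}(I)$. Moreover $e_{0}(\f^{*})=e(I)=\ell(R/I)$: the containments $I^{n}\subseteq(I^{n})^{*}\subseteq\ov{I^{n}}$ force $\f^{*}$ to have the same multiplicity as the $I$-adic filtration, and $e(I)=\ell(R/I)$ for a parameter ideal in a Cohen--Macaulay ring. (In particular $P^{*}_{I}(n)\le\ell(R/I^{n})=e(I)\binom{n+d-1}{d}$, so $e^{*}_{1}(I)\ge 0$; the theorem asserts that its vanishing characterizes $F$-rationality.)

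For ``$e^{*}_{1}(I)=0\Rightarrow R$ is $F$-rational'' I would invoke the version of Northcott's inequality valid for an admissible filtration over a Cohen--Macaulay local ring, $e_{1}(\f)\ge e_{0}(\f)-\ell(R/I_{1})$. Applied to $\f^{*}$, whose first term is $I^{*}$, this reads
\[
0=e^{*}_{1}(I)=e_{1}(\f^{*})\ \ge\ e_{0}(\f^{*})-\ell(R/I^{*})=\ell(R/I)-\ell(R/I^{*})=\ell(I^{*}/I)\ \ge\ 0,
\]
so $I^{*}=I$. Hence $R$ possesses a tightly closed parameter ideal, and I would finish this direction by the theorem of Hochster--Huneke that a Cohen--Macaulay (excellent, or here analytically unramified via passage to the completion, which is excellent, equidimensional and reduced) local ring with a tightly closed parameter ideal is $F$-rational.

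For the converse, assume $R$ is $F$-rational. By definition every ideal of principal class is tightly closed; in particular $(x_{1}^{a_{1}},\dots,x_{d}^{a_{d}})^{*}=(x_{1}^{a_{1}},\dots,x_{d}^{a_{d}})$ for all $a_{1},\dots,a_{d}\ge 1$, this being an $\m$-primary ideal generated by a system of parameters. Using the classical identity for a regular sequence in a Cohen--Macaulay ring,
\[
I^{n}=\bigcap_{\substack{a_{i}\ge 1\\ a_{1}+\cdots+a_{d}=n+d-1}}(x_{1}^{a_{1}},\dots,x_{d}^{a_{d}}),
\]
which lifts the corresponding monomial-ideal identity via $\gr_{I}(R)=(R/I)[T_{1},\dots,T_{d}]$ (Valabrega--Valla), together with the fact that a finite intersection of tightly closed ideals is tightly closed, I would conclude $(I^{n})^{*}=I^{n}$ for every $n$. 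Therefore $H^{*}_{I}(n)=\ell(R/I^{n})=e(I)\binom{n+d-1}{d}$ for all $n$, so $P^{*}_{I}(n)=e(I)\binom{n+d-1}{d}$ and $e^{*}_{1}(I)=0$.

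I expect the main obstacle to be the ``if'' direction: both the Northcott-type lower bound $e_{1}(\f^{*})\ge e_{0}(\f^{*})-\ell(R/I^{*})$ for the tight-closure filtration and the reduction of the Hochster--Huneke criterion to the present hypotheses (ensuring the chosen parameter ideal remains tightly closed after completing) require care, whereas the intersection identity and the stability of tight closure under finite intersections are routine. An alternative route for the ``if'' direction, using the preceding theorem: if one can verify that $\f^{*}$ satisfies the condition $HI_{p}$ for $p\le r(\f^{*})-2$, then $e^{*}_{1}(I)=\sum_{k\ge 0}\ell\bigl((I^{k+1})^{*}/I(I^{k})^{*}\bigr)$ is a sum of nonnegative integers, whose vanishing forces $(I^{n})^{*}=I^{n}$ for all $n$ by induction starting from $(I^{0})^{*}=R$, and in particular $I^{*}=I$.
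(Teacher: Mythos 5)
Your reverse direction ($e_1^*(I)=0 \Rightarrow F$-rational) coincides with the paper's: both apply the Huckaba--Marley Northcott-type bound $e_1^*(I)\ge e_0^*(I)-\ell(R/I^*)$ to conclude $\ell(I^*/I)=0$, hence $I=I^*$, and then invoke the standard fact that a Cohen--Macaulay (excellent) local ring with one tightly closed parameter ideal is $F$-rational. Where you genuinely differ is the forward direction. The paper deduces $(I^n)^*=I^n$ for all $n$ from Corollary \ref{Watanabe}, which it has already established via Proposition \ref{intersectTight}, the tight-closure analogue of the Huneke--Itoh intersection theorem $I^n\cap(I^{n+1})^*=I^nI^*$. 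You instead use the irreducible decomposition $I^{n}=\bigcap_{a_i\ge 1,\;|a|=n+d-1}(x_1^{a_1},\dots,x_d^{a_d})$, noting that each member of the intersection is a tightly closed parameter ideal when $R$ is $F$-rational and that tight closure is preserved under finite intersections. Both routes reach $(I^n)^*=I^n$; the paper's is the more economical within its own framework since it already possesses the required intersection theorem, whereas yours leans on a separate decomposition that, while correct, needs a careful Valabrega--Valla style lift from the polynomial ring to a general regular sequence --- a point you rightly flag as the delicate spot. Your alternate sketch for the reverse implication via the $HI_p$ condition and Theorem \ref{HSpoly} would also work once one verified $HI_p$ for the tight-closure filtration, but the Northcott-type bound is the shorter path and is what the paper uses.
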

We shall prove that in positive prime characteristic $p$, the Huneke-Itoh intersection theorem is indeed true for tight closure filtration of a complete intersection. 
\begin{Proposition}
	Let $R$ be a Noetherian ring and I be an ideal generated by a regular sequence. Then for all $n \ge 1$, 
	\[ I^n \cap {(I^{n+1})^*}=I^n {I^*}. \]
\end{Proposition}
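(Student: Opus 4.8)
The inclusion $I^{n}I^{*}\subseteq I^{n}\cap(I^{n+1})^{*}$ is the easy half and I would dispatch it first: clearly $I^{n}I^{*}\subseteq I^{n}$, and if $b\in I^{*}$, say $cb^{q}\in I^{[q]}$ for all $q=p^{e}\gg0$ with $c$ in no minimal prime of $R$, then for a degree-$n$ monomial $x^{\alpha}$ in the generators of $I$ one has $c(x^{\alpha}b)^{q}=x^{q\alpha}(cb^{q})\in(I^{[q]})^{n}I^{[q]}=(I^{n+1})^{[q]}$, so $x^{\alpha}b\in(I^{n+1})^{*}$; as $(I^{n+1})^{*}$ is an ideal and $I^{n}I^{*}$ is generated by such products, the inclusion follows. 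For the reverse inclusion $I^{n}\cap(I^{n+1})^{*}\subseteq I^{n}I^{*}$ the plan is to push the membership into the associated graded ring of the complete intersection $I^{[q]}$, where it becomes linear algebra over $R/I^{[q]}$.

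Fix a regular sequence $x_{1},\dots,x_{d}$ generating $I$ and take $u\in I^{n}\cap(I^{n+1})^{*}$, so there is $c\in R$ in no minimal prime with $cu^{q}\in(I^{n+1})^{[q]}$ for all $q=p^{e}\gg0$. Two standard facts enter: first, Frobenius being additive in characteristic $p$, $(I^{m})^{[q]}=(I^{[q]})^{m}=(x_{1}^{q},\dots,x_{d}^{q})^{m}$ for every $m\ge1$; second, $x_{1}^{q},\dots,x_{d}^{q}$ is again an $R$-regular sequence, so writing $J:=I^{[q]}$ one has $\gr_{J}(R)\cong(R/J)[Y_{1},\dots,Y_{d}]$, and in particular the classes $\overline{x^{q\alpha}}$ ($|\alpha|=n$) of the degree-$n$ monomials in $x_{1}^{q},\dots,x_{d}^{q}$ form an $R/J$-basis of $J^{n}/J^{n+1}$.

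I would then write $u=\sum_{|\alpha|=n}a_{\alpha}x^{\alpha}$ with $\alpha\in\N^{d}$; applying Frobenius gives $u^{q}=\sum_{|\alpha|=n}a_{\alpha}^{q}x^{q\alpha}\in J^{n}$, so the image of $u^{q}$ in $J^{n}/J^{n+1}$ is $\sum_{|\alpha|=n}\overline{a_{\alpha}^{q}}\,\overline{x^{q\alpha}}$. The hypothesis $cu^{q}\in J^{n+1}$ then forces $\bar c\sum_{|\alpha|=n}\overline{a_{\alpha}^{q}}\,\overline{x^{q\alpha}}=0$ in $J^{n}/J^{n+1}$, and comparing coefficients against the basis above gives $\bar c\,\overline{a_{\alpha}^{q}}=0$, that is $ca_{\alpha}^{q}\in J=I^{[q]}$, for every $\alpha$. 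Since one fixed $c$ works for all $q\gg0$, each $a_{\alpha}$ lies in $I^{*}$ by definition, whence $u=\sum_{\alpha}a_{\alpha}x^{\alpha}\in I^{*}I^{n}$, as required.

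The one delicate point — and the only place I expect the argument really has to do something — is that $\bar c$ need not be a nonzerodivisor on $R/I^{[q]}$, so one cannot simply cancel $c$ after passing to the associated graded ring. It is precisely the freeness of $(I^{[q]})^{n}/(I^{[q]})^{n+1}$ over $R/I^{[q]}$, equivalently the polynomial-ring structure of $\gr_{I^{[q]}}(R)$ — which is where the complete-intersection hypothesis is used — that allows the coordinatewise conclusion $ca_{\alpha}^{q}\in I^{[q]}$. The remaining ingredients, namely $(I^{m})^{[q]}=(I^{[q]})^{m}$ and the stability of regular sequences under taking powers, are routine.
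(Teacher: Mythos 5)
Your proof is correct and follows essentially the same route as the paper: write $u=\sum_{|\alpha|=n}a_\alpha x^\alpha$, apply Frobenius, and use the regular-sequence property of $x_1^q,\dots,x_d^q$ to extract $ca_\alpha^q\in I^{[q]}$ from $cu^q\in (I^{[q]})^{n+1}$, hence $a_\alpha\in I^*$. You merely make explicit (via the freeness of $(I^{[q]})^n/(I^{[q]})^{n+1}$ over $R/I^{[q]}$) the step that the paper compresses into ``using the fact that $x_1^q,\dots,x_d^q$ is a regular sequence.''
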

The above proposition  for tight closures of powers of $I$ will enable us to find the tight Hilbert polynomial of $I$ when the tight reduction number is at most two. 

In section 5,  we calculate the tight Hilbert polynomial in diagonal hypersurface rings. Let $R=\F[[X,Y,Z]]/(X^N+Y^N+Z^N)$, where $N \ge 2$ and $\F$ is an infinite field of characteristic $p$ and $p \nmid N$. Let $x,y,z$ denote the images of $X,Y,Z$ respectively in $R$. Let $I=(y,z)$ and  $\m=(x,y,z)$. We show that if $N=2$, then $R$ is $F$-rational and if $N \ge 3$, then for all $k \ge 1$,
\[ (I^k)^*=\m^{k+1}+I^k \]
and the tight reduction number of $I$, $r^*(I)=N-2$.	

In section 6, we calculate tight closure of power product of ideals generated by linear systems of parameters in Stanley-Reisner rings of simplicial complexes. Let $\Delta$ be a $(d-1)$-dimensional simplicial complex on $n$ vertices and $\kk$ be a field of char $p>0$. Then $R= \kk[\Delta]=\kk[X_1,\dots ,X_n]/I_\Delta$ is a $d$-dimensional ring. Let $x_1,\dots,x_n$ denote the images of $X_1,\dots,X_n$ respectively in $R$, $\m=(x_1,\dots ,x_n)$ be the unique maximal homogeneous ideal of $\kk[\Delta]$ and $I_1,\dots,I_g$ be ideals generated by linear systems of parameters. Let $s_1,s_2,\dots, s_g\in \mathbb{N}$. We prove that 
\[ \ov{I_1^{s_1}I_2^{s_2}\cdots I_g^{s_g}} = (I_1^{s_1}I_2^{s_2}\cdots I_g^{s_g})^* = \m^{s_1+s_2+\cdots+s_g}. \]

Let $h(\Delta)=(h_0, h_1,\dots ,h_d)$ denote the $h$-vector of $\kk[\Delta]$. The Hilbert series of $R$ is of the form
\[ H(\kk[\Delta],\lambda) = \frac{h_0+h_1\lambda+\cdots +h_d\lambda^d}{(1-\lambda)^d}. \]
Set $\h(\lambda)=h_0+h_1\lambda+\cdots +h_d\lambda^d$. Let $\h^{(i)}(\lambda)$ denote the $i$-th derivative of $\h(\lambda)$ with respect to $\lambda$. Let $s_1,s_2,\dots, s_g\in \mathbb{N}$ with $s_1+s_2+\cdots+s_g = n+1$. Then 
\[ \ell\left(\frac{R}{(I_1^{s_1}I_2^{s_2}\cdots I_g^{s_g})^*}\right) = \ell\left(\frac{R}{\m^{n+1}}\right) = \sum_{i=0}^{d}(-1)^i\frac{\h^{(i)}(1)}{i!} {n+d-i\choose d-i}. \]
In particular, $e_d^*(I_1^{s_1}I_2^{s_2}\cdots I_g^{s_g}) = h_d$, for any $s_1,s_2,\dots,s_g\in \mathbb{N}$. 

We generalize a theorem of Rees  \cite{rees1981hilbert} for Stanley-Reisner rings. Recall that for a simplicial complex $\Delta$ with $f$-vector $f(\Delta) = (f_{-1},f_0,\dots,f_{d-1})$, the \emph{Euler characteristic} of $\Delta$ is denoted by 
\[ \chi(\Delta)=f_0-f_1+\cdots+(-1)^{d-1}f_{d-1}. \]
The simplicial complex $\Delta$ is said to be \emph{Eulerian} if $\chi(\Delta)=1$. Set $S=(s_1,s_2,\dots ,s_d)$, $S_i=(s_1,\dots ,s_{i-1},s_i-1,s_{i+1},\dots ,s_d)$ and $\underline{I}^{S_i}=(\prod_{j\not= i}I_j^{s_j})I_i^{s_i-1}$, for all $i=1,2,\dots ,d$.

\begin{Theorem}
	Let $\kk[\Delta]$ be a $d$-dimensional Cohen-Macaulay Stanley-Reisner ring of a  simplicial complex $\Delta$. Let $\m$ be its unique maximal homogeneous ideal and $I_1,I_2,\dots ,I_d$ be ideals generated by linear system of parameters. Then for any joint reduction $(a_1,a_2,\dots ,a_d)$ of the $d$-tuple $(I_1,I_2,\dots,I_d)$, the following are equivalent:\medskip \\
	{\rm(1)} $\Delta$ is Eulerian.\medskip\\
	{\rm(2)} $r(\m)\le d-1.$\medskip\\
	{\rm(3)} $a_1 (\underline{I}^{S_1})^* + a_2(\underline{I}^{S_2})^* + \cdots + a_d(\underline{I}^{S_d})^* = (\underline{I}^S)^*$, for all $s_1,s_2,\dots ,s_d\ge 1$.\medskip\\
	{\rm(4)} $\displaystyle e^*_d(I_1\cdots I_d) -\sum_{i=1}^{d}\left(e^*_d \big(\prod_{j\not=i}I_j \big)\right)+\sum_{1\le p<q\le d}\left(e^*_d \big(\prod_{j\not=p,q}I_j \big)\right)-\cdots+(-1)^{d-1} \sum_{i=1}^{d}\left(e^*_d(I_i)\right)=0.$
\end{Theorem}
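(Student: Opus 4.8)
\emph{Proof proposal.} Write $\sigma=s_1+\cdots+s_d$ and $\delta=\deg\h(\lambda)$, so that $h_d=0$ precisely when $\delta\le d-1$. My plan is to show that each of (1)--(4) is equivalent to the single numerical condition $h_d=0$, the common engine being the identity $\ov{I_1^{t_1}\cdots I_g^{t_g}}=(I_1^{t_1}\cdots I_g^{t_g})^*=\m^{t_1+\cdots+t_g}$ established in Section~6 for ideals generated by linear systems of parameters. From it I get $(\underline{I}^{S})^*=\m^{\sigma}$ and $(\underline{I}^{S_i})^*=\m^{\sigma-1}$ for every $i$ (here $\underline{I}^{S_i}$ is a product, of total exponent $\sigma-1$, of $d$ of the ideals $I_j$ when $s_i\ge2$ and of $d-1$ of them when $s_i=1$), the equality $e_d^*\big(\prod_{j\in T}I_j\big)=h_d$ for every nonempty $T\subseteq\{1,\dots,d\}$, and, taking $g=1$, $\ov{I_j}=(I_j)^*=\m$, so each $I_j$ is a minimal reduction of $\m$.

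For (1)$\Leftrightarrow h_d=0$, I would expand $\h(\lambda)=\sum_{j=0}^{d}f_{j-1}\lambda^{j}(1-\lambda)^{d-j}$ and read off the coefficient of $\lambda^{d}$, obtaining $h_d=\sum_{j=0}^{d}(-1)^{d-j}f_{j-1}=(-1)^{d}\big(1-\chi(\Delta)\big)$, which vanishes exactly when $\chi(\Delta)=1$. For (2)$\Leftrightarrow h_d=0$, I would use that a Cohen--Macaulay standard graded ring is free over the polynomial subalgebra $\kk[\q]$ generated by any linear minimal reduction $\q$ of $\m$, so $\m^{n+1}/\q\m^{n}\cong(R/\q R)_{n+1}$ has length $h_{n+1}$ and hence $r(\m)=r_{\q}(\m)=\delta$; thus $r(\m)\le d-1\Leftrightarrow h_d=0$.

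For (3)$\Leftrightarrow h_d=0$, substituting the values of $(\underline{I}^{S_i})^*$ and $(\underline{I}^{S})^*$ turns (3) into the statement $(a_1,\dots,a_d)\m^{\sigma-1}=\m^{\sigma}$ for all $\sigma\ge d$, which (after multiplying by $\m$) is equivalent to its case $\sigma=d$, i.e.\ to $r_{(a_1,\dots,a_d)}(\m)\le d-1$. Because $(a_1,\dots,a_d)$ is a joint reduction of $(I_1,\dots,I_d)$ and each $I_j$ reduces $\m$, the mixed multiplicities satisfy $e(a_1,\dots,a_d)=e(I_1,\dots,I_d)=e(\m)$; since $R$ is Cohen--Macaulay, hence formally equidimensional, Rees' multiplicity theorem then gives $\ov{(a_1,\dots,a_d)}=\m$, so $(a_1,\dots,a_d)$ is a minimal reduction of $\m$ generated by linear forms and, as in the previous paragraph, $r_{(a_1,\dots,a_d)}(\m)=\delta$; hence (3)$\Leftrightarrow\delta\le d-1\Leftrightarrow h_d=0$. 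Finally, for (4)$\Leftrightarrow h_d=0$, I would replace every $e_d^*$-term by $h_d$ and count the $\binom{d}{k}$ products over the $(d-k)$-element subsets with sign $(-1)^{k}$; the left-hand side of (4) then becomes $h_d\sum_{k=0}^{d-1}(-1)^{k}\binom{d}{k}=(-1)^{d+1}h_d$, which vanishes iff $h_d=0$.

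The step needing the most care will be (3): I must check that an arbitrary joint reduction $(a_1,\dots,a_d)$ of $(I_1,\dots,I_d)$ really is a minimal reduction of $\m$ with reduction number $\deg\h(\lambda)$, which pulls in Rees' theory of joint reductions and mixed multiplicities together with the Cohen--Macaulay identity $r_{\q}(\m)=\deg\h(\lambda)$ and the observation that the joint reduction may be taken to consist of linear forms. Once the Section~6 identity $(I_1^{t_1}\cdots I_g^{t_g})^*=\m^{t_1+\cdots+t_g}$ is available, the other three equivalences are short.
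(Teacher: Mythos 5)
Your proof is correct and follows essentially the same route as the paper: reduce each of (1)--(4) to the single condition $h_d=0$ using the identity $(\underline I^S)^*=\m^{|S|}$, then compute. The differences are of packaging. Where you expand $\h(\lambda)=\sum_j f_{j-1}\lambda^j(1-\lambda)^{d-j}$ to get $h_d=(-1)^d(1-\chi(\Delta))$ and then argue $r(\m)=\deg\h$ by freeness over a linear Noether normalization, the paper shortcuts both steps at once by quoting Marley's theorem that in a Cohen--Macaulay local ring the postulation number of $\m$ is $r(\m)-d$, so ``Hilbert function $=$ Hilbert polynomial for all $n\geq 0$'' (the Eulerian condition) is directly equivalent to $r(\m)\le d-1$. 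Your treatment of (3) is actually more careful than the paper's in one respect: you explicitly justify, via equality of mixed multiplicities and Rees' multiplicity theorem in the formally equidimensional case, that a joint reduction $(a_1,\dots,a_d)$ of $(I_1,\dots,I_d)$ is in fact a minimal reduction of $\m$; the paper passes over this step silently. The computation for (4) is identical in both.

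One small over-assertion to watch: you write that $(a_1,\dots,a_d)$ is a minimal reduction of $\m$ ``generated by linear forms.'' That does not follow from $a_i\in I_i$; an element of an ideal generated by linear forms need not itself be linear, and the theorem is stated for \emph{any} joint reduction. What you actually need to close the argument is that in a Cohen--Macaulay standard graded ring over an infinite field the reduction number $r_J(\m)$ is the same for every minimal reduction $J$ of $\m$ and equals $\deg\h(\lambda)$; with that fact in hand, linearity of the $a_i$ is irrelevant. The paper's own phrasing has the same implicit appeal, so this is a place to add a citation rather than a flaw in your approach.
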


\noindent
{\bf Acknowledgements:} We thank K.-i. Watanabe  and Anurag Singh for numerous discussions.

\section{Preliminaries}
In this section, we recall a few definitions, results, and set up notation. 

\begin{Definition}{\rm
	Let $(R, \m)$ be a local ring and $I$ be an $\m$-primary ideal. Then a sequence of ideals $\f = \{I_n\}_{n\in\Z}$, where $I_n = R$ for all $n\le 0$, is called an \emph{$I$-filtration} if for all $n,m \in \Z$, we have
	{\rm(1)} $I_{n+1} \subseteq I_n$
	{\rm(2)} $I_m \cdot I_n \subseteq I_{m+n}$ and 
	{\rm(3)} $I^n \subseteq I_n.$\\
	An $I$-filtration $\f$ is called \emph{admissible} if in addition
	to the above properties, there exists a $k\in\N$ such that $I_n \subseteq I^{n-k}$, for all $n$.
}\end{Definition}

Let $t$ be an indeterminate over $R$. The \emph{Rees ring of $I$} is the graded ring $\oplus_{n\in \Z} I^n t^n$, denoted by $\R(I)$. For a filtration $\f=\{ I_n \}$, the \emph{Rees ring of $\f$} is $\R(\f)=\oplus_{n\in \Z}I_nt^n$. If $\f$ is an $I$-admissible filtration, then by definition, $\R(\f)$ is a finite $\R(I)$-module. Another blow-up algebra which we often refer to in this article is the associated graded ring of $I$, $\gr_I(R)=\bigoplus_{n\ge 0}I^n/I^{n+1}$ and the associated graded ring of $\f$, $\gr_{\f}(R) = \bigoplus_{n\ge 0}I_n/I_{n+1}$.

\begin{Proposition} \label{poly}
	Let $(R,\m)$ be a $d$-dimensional local ring and $I$ be an $\m$-primary ideal. Let  $\f = \{I_n\}$ be an $I$-admissible filtration. Then\\
	{\rm(1)} $\gr_{\f}(R)$ is a finite $\gr_I(R)$-module.\\
	{\rm(2)} $\dim \gr_{\f}(R) = d$.\\
	{\rm(3)} $H_{\f}(n) = \ell(R/I_n)$ is a polynomial function given by $P_{\f}(x) \in \Q[x]$. Moreover, $P_{\f}(x)$ and $P_I(x)$ have the same leading coefficient.
\end{Proposition}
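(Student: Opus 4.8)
The plan is to reduce everything to the associated graded ring $\gr_\f(R)$ and then use the admissibility bounds on $\f$ to pin down the degree and leading coefficient of the resulting polynomial.

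For (1), I would first note that $\gr_\f(R)$ is naturally a quotient of the Rees ring: in $\R(\f)=\oplus_{n\in\Z}I_nt^n$ one has $t^{-1}\R(\f)=\oplus_{n\in\Z}I_{n+1}t^n$, so the graded quotient $\R(\f)/t^{-1}\R(\f)$ has degree-$n$ component $I_nt^n/I_{n+1}t^n\cong I_n/I_{n+1}$, which is exactly $\gr_\f(R)$ (the negative degrees vanish since $I_n=R$ for $n\le0$). The same identity gives $\gr_I(R)=\R(I)/t^{-1}\R(I)$. Because $\f$ is admissible, $\R(\f)$ is a finite $\R(I)$-module, hence so is its quotient $\gr_\f(R)$; since the $\R(I)$-action on $\gr_\f(R)$ is killed by $t^{-1}\R(I)$, it is in fact a finite $\gr_I(R)$-module. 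This proves (1), and it also yields $\dim\gr_\f(R)\le\dim\gr_I(R)=d$ (the last equality because $I$ is $\m$-primary).

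Next I would establish polynomiality. As $I$ is $\m$-primary, $\gr_I(R)$ is a Noetherian graded ring generated over the Artinian ring $R/I$ by its degree-one component $I/I^2$, and by (1) the module $\gr_\f(R)=\oplus_{n\ge0}I_n/I_{n+1}$ is finite over it. By the Hilbert--Serre theorem, $n\mapsto\ell(I_n/I_{n+1})$ coincides for $n\gg0$ with a polynomial of degree $\dim\gr_\f(R)-1$; summing and using $\ell(R/I_n)=\sum_{j=0}^{n-1}\ell(I_j/I_{j+1})$, the function $H_\f(n)=\ell(R/I_n)$ coincides for large $n$ with a polynomial $P_\f(x)$ of degree $\dim\gr_\f(R)$, which lies in $\Q[x]$ because it is integer-valued on $\Z$.

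It remains to show $\deg P_\f=d$ and to compare leading coefficients. Admissibility provides $k\in\N$ with $I^n\subseteq I_n\subseteq I^{n-k}$ for all $n$, so $\ell(R/I^{n-k})\le\ell(R/I_n)\le\ell(R/I^n)$; for $n\gg0$ the two outer terms equal $P_I(n-k)$ and $P_I(n)$, each a degree-$d$ polynomial in $n$ with leading coefficient $e(I)/d!>0$. Squeezing $P_\f$ between them forces $\deg P_\f=d$, hence $\dim\gr_\f(R)=d$ — proving (2) — and forces the leading coefficient of $P_\f$ to be $e(I)/d!$, i.e. equal to that of $P_I$, completing (3). Nothing here is deep; the points to be careful about are identifying $\gr_\f(R)$ with a quotient of $\R(\f)$ so that finiteness descends, and invoking Hilbert--Serre over the Artin ring $R/I$ rather than over a field — which is precisely where the hypothesis that $I$ is $\m$-primary is used.
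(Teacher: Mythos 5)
The paper states Proposition \ref{poly} in the Preliminaries as a known fact and gives no proof, so there is no argument to compare against; your proof is correct and is the standard one. The identification $\gr_{\f}(R)\cong\R(\f)/t^{-1}\R(\f)$ correctly transfers finiteness from the Rees algebras to the associated graded rings, and you are right that $t^{-1}\R(I)$ annihilates this quotient, so the finite $\R(I)$-module structure descends to a finite $\gr_I(R)$-module structure; Hilbert--Serre over the Artinian coefficient ring $R/I$ then gives eventual polynomiality of $\ell(I_n/I_{n+1})$ and hence of $H_{\f}(n)$ after summing; and the admissibility sandwich $I^n\subseteq I_n\subseteq I^{n-k}$ correctly squeezes both the degree (giving $\dim\gr_{\f}(R)=d$, which also settles (2)) and the leading coefficient to $e(I)/d!$, matching $P_I$.
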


Let $I$ be an $\m$-primary ideal. The \emph{Hilbert function} of the filtration $\f$ is defined by $H_{\f}(n) = \ell(R/I_n)$. If $\f$ is an $I$-admissible filtration, then for sufficiently large $n$, $H_{\f}(n)$ coincides with a polynomial 
\[ P_{\f}(n) = e_0(\f){n+d-1 \choose d} - e_1(\f){n+d-2 \choose d-1}+\cdots + (-1)^d e_d(\f) \]
of degree $d$, called the \emph{Hilbert polynomial} of $\f$. The coefficients $e_i(\f)$ for $i=1,\dots,d$ are integers, called the \emph{Hilbert coefficients} of $\f$.

\begin{Definition}{\rm
A \emph{reduction} of an $I$-admissible filtration $\f = \{I_n\}_{n\in\Z}$ is an ideal $J \subseteq I_1$ such that $JI_n = I_{n+1}$, for all large $n$. A minimal reduction of $\f$ is a reduction of $\f$ minimal with respect to containment. Let $J$ be a reduction of $\f$. The \emph{reduction number} of $\f$ with respect to $J$, denoted by $r_J(\f)$, is the smallest integer $n$ such that $JI_n = I_{n+1}$. The reduction number of $\f$ equals
\[ r(\f) = \min\{ r_J(\f) \mid J \text{ is a minimal reduction of } I \}. \]
}\end{Definition}

\begin{Definition}{\rm
Let $I_1,I_2,\dots,I_d$ be ideals in a ring $R$ and $x_i \in I_i$ for all $i =1,2,\dots,d$. The $d$-tuple $(x_1,x_2,\dots,x_d)$ is said to be a \emph{joint reduction} of the $d$-tuple $(I_1,I_2,\dots,I_d)$ if the ideal $\sum_{i=1}^{d} x_i I_1\cdots I_{i-1}I_{i+1}\cdots I_d$ is a reduction of $I_1 \cdots I_d$. In particular, for large $s_1,\dots,s_d$, we have
\[ I_1^{s_1}\cdots I_d^{s_d} = \sum_{i=1}^{d} x_i\underline{I}^{S_i}. \]
}\end{Definition}

In this paper, we focus on two closure operations, integral closure and tight closure.

\begin{Definition}{\rm
	Let $R$ be a $d$-dimensional Noetherian ring and let $I$ be an ideal of $R$. An element $x \in R$ is said to be \emph{integral over $I$} if there exists an equation of the form 
	\[ x^n+a_1x^{n-1}+\cdots + a_n=0 \]
	where $a_i\in I^i$, for all $1 \le i \le n$. The set $\ov{I}$ of all elements in $R$ which are integral over $I$ is an ideal, called the integral closure of $I$.
}\end{Definition}

\begin{Definition}{\rm
	A Noetherian local ring $(R,\m)$ is called \emph{analytically unramified} if its $\m$-adic completion is reduced.
}\end{Definition}
Rees characterized analytically unramified local rings in terms of integral closure of powers of ideals \cite{reesAUrings}.
\begin{Theorem}\label{reesau}
	Let $(R,\m)$ be a Noetherian local ring. Then the following are equivalent:\\
	{\rm(1)} $R$ is analytically unramified.\\
	{\rm(2)} For all ideals $I \subseteq R$, there is an integer $k$ such that $\ov{I^{n+k}} \subseteq I^n$, for all $n \ge 0$.\\
	{\rm(3)} There exist an $\m$-primary ideal $J$ and an integer $k$ such that for all $n \ge 0$, \[ \ov{J^{n+k}} \subseteq J^n. \]
	{\rm(4)} There exist an $\m$-primary ideal $J$ and a function $f : \N \rightarrow \N$ with
	$f(n)\rightarrow\infty$ as $n \rightarrow \infty$ such that $\ov{J^n} \subseteq J^{f(n)}$, for all $n \ge 0$.
\end{Theorem}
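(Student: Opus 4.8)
The plan is to prove the cycle $(1)\Rightarrow(2)\Rightarrow(3)\Rightarrow(4)\Rightarrow(1)$. Two links are formal: for $(2)\Rightarrow(3)$ I would apply $(2)$ to an $\m$-primary ideal $J$ (say $J=\m$) and keep the resulting exponent $k$; for $(3)\Rightarrow(4)$ I would keep the same $J$ and put $f(n)=\max\{n-k,0\}$, so that $f(n)\to\infty$ and, replacing $n$ by $n-k$ in $(3)$, one gets $\overline{J^{\,n}}\subseteq J^{\,n-k}=J^{\,f(n)}$ for $n\ge k$, while $\overline{J^{\,n}}\subseteq R=J^{0}$ for $n<k$. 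All the content therefore lies in $(1)\Rightarrow(2)$ and $(4)\Rightarrow(1)$.

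For $(1)\Rightarrow(2)$ the engine is the finiteness theorem of Rees recalled in the introduction: when $R$ is analytically unramified, $\overline{\mathcal R(I)}=\bigoplus_{n\ge 0}\overline{I^{\,n}}\,t^{\,n}$ (the integral closure of $\mathcal R(I)$ in $R[t]$) is a finite $\mathcal R(I)$-module. Granting this, I would choose finitely many homogeneous module generators, all of degree at most $k$. Comparing the degree-$m$ components for $m\ge k$ then yields
\[ \overline{I^{\,m}}=\sum_{j=0}^{k} I^{\,m-j}\,\overline{I^{\,j}} . \]
Putting $m=n+k$ and using $I^{\,n+k-j}\subseteq I^{\,n}$ for $0\le j\le k$ together with $\overline{I^{\,j}}\subseteq R$ gives $\overline{I^{\,n+k}}\subseteq I^{\,n}$ for all $n\ge 0$, which is $(2)$.

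For $(4)\Rightarrow(1)$ the goal is to show $\widehat R$ is reduced. I would first transport the hypothesis to $\widehat R$: since integral closure of ideals is compatible with completion of a Noetherian local ring, $\overline{(J\widehat R)^{\,n}}\subseteq\overline{J^{\,n}}\,\widehat R$, and hence, using $(4)$ and $J\subseteq\m$, one has $\overline{(J\widehat R)^{\,n}}\subseteq J^{\,f(n)}\widehat R\subseteq(\m\widehat R)^{\,f(n)}$ for all $n$. Now let $z\in\widehat R$ be nilpotent, say $z^{m}=0$. The monic relation $X^{m}=0$ has all lower coefficients equal to $0$, hence lying in every power of every ideal, so $z$ is integral over every ideal of $\widehat R$; in particular $z\in\overline{(J\widehat R)^{\,n}}\subseteq(\m\widehat R)^{\,f(n)}$ for every $n$. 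Since $f(n)\to\infty$ and $\widehat R$ is Noetherian local, Krull's intersection theorem forces $z\in\bigcap_{t\ge 1}(\m\widehat R)^{\,t}=0$. Thus $\widehat R$ has no nonzero nilpotents, i.e., $R$ is analytically unramified.

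The main obstacle is the finiteness theorem of Rees used in $(1)\Rightarrow(2)$ — the module-finiteness of the normalization of the Rees algebra — which one would prove by reducing to the case that $R$ is complete (hence reduced) and invoking that complete Noetherian local rings are Nagata, so that integral closures of finitely generated reduced algebras over them are module-finite. The only other technical input is the compatibility of integral closure of ideals with completion, used in $(4)\Rightarrow(1)$; everything else is routine bookkeeping.
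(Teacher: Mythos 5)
The paper itself does not prove this statement; it is quoted as Rees's theorem from \cite{reesAUrings}, so your attempt can only be measured against the standard argument. Your reductions $(2)\Rightarrow(3)\Rightarrow(4)$ are fine, and $(1)\Rightarrow(2)$ via degree-bounded module generators of $\overline{\mathcal R}(I)=\bigoplus_n\overline{I^n}t^n$, with module-finiteness obtained by passing to the (reduced, complete, hence Nagata) completion, is a legitimate route — essentially the standard one, modulo the usual care needed when $I$ consists of zerodivisors (one passes to $R$ modulo its minimal primes, or notes that $R[t]$ lies in the total quotient ring of $\mathcal R(I)$ only when $I$ contains a nonzerodivisor), and modulo descending the conclusion from $\widehat R$ to $R$ by faithful flatness.

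The genuine gap is in $(4)\Rightarrow(1)$: you quote as a known compatibility the inclusion $\overline{(J\widehat R)^n}\subseteq\overline{J^n}\,\widehat R$. The true general facts go the other way: persistence gives $\overline{J^n}\,\widehat R\subseteq\overline{(J\widehat R)^n}$, and one has the contraction equality $\overline{(J\widehat R)^n}\cap R=\overline{J^n}$; the inclusion you need is false for general Noetherian local rings, and its failure is exactly the analytically ramified phenomenon you are trying to rule out. Indeed, for an analytically ramified local domain already the zero ideal is a counterexample: $\overline{(0)\widehat R}$ is the nonzero nilradical of $\widehat R$, while $\overline{(0)}\,\widehat R=0$; and since every nilpotent of $\widehat R$ lies in $\overline{(J\widehat R)^n}$ for all $n$, your displayed chain is essentially the assertion to be proved, not a citable lemma. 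The standard repair stays inside $R$: given $z\in\widehat R$ with $z^m=0$, choose for each $c$ an element $w\in R$ with $z-w\in\m^c\widehat R$; then $w^m\in\m^c\widehat R\cap R=\m^c$, and since $J$ is $\m$-primary, $\m^s\subseteq J$ for some $s$, so $w^m\in J^{mk}$ with $k\approx c/(ms)$, hence $w\in\overline{J^k}$, and hypothesis $(4)$ gives $w\in J^{f(k)}\subseteq\m^{f(k)}$. Therefore $z=w+(z-w)\in\m^{\min(f(k),c)}\widehat R$, and letting $c\to\infty$, Krull's intersection theorem in $\widehat R$ forces $z=0$. With this substitution your cycle closes; as written, the step rests on an invalid compatibility of integral closure with completion.
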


If $(R,\m)$ is an analytically unramified local ring and $I$ is an $\m$-primary ideal, then the $I$-filtration $\f=\{\ov{I^n}\}$ is admissible and $R(\f)$ is a finite $R(I)$-module. 

\begin{Definition}
	For a ring $R$, the subset of $R$ consisting of all the elements which are not contained in any minimal prime ideal of $R$, is denoted by $R^\circ$. 
\end{Definition}

Let $\chr R = p >0$. Consider the Frobenius map $F:R\rightarrow R$ defined as $F(x)  = x^p$. Then $F$ is a ring homomorphism. The $e$-th iterate of $F$ is defined as $F^e(x) = x^{p^e}$. Note that R may be viewed as an R-module via the Frobenius homomorphism or any of its iterations. We write $F^e(R)$ to denote $R$ viewed as an $R$-module via the ring homomorphism $F^e$. Let $q=p^e$. If $I=(a_1,\dots,a_r)$ is an ideal of $R$ then $I^{[q]}=(a_1^q,\dots,a_r^q)$. For any positive integer $n$, $(I^n)^{[q]}=(I^{[q]})^n$. 
\begin{Definition}
	An element $x \in R$ is said to be in the tight closure $I^*$ of $I$ if there exists $c \in R^\circ$ such that $cx^q \in I^{[q]}$, for all sufficiently large $q$.
\end{Definition}

Huneke and Itoh independently proved the theorem below for  integral closure of powers of ideals generated by a regular sequence.  While Huneke proved it for Cohen-Macaulay local rings containing a field, Itoh proved it in general for any Noetherian ring.
\begin{Theorem}[Itoh, {\cite[Theorem 1]{itoh88}}]\label{itohthm}
	Let $R$ be a Noetherian ring and $I$ be an ideal generated by an $R$-regular sequence. Then for all $n \ge 1$,
	\[ \ov{I^{n+1}} \cap I^n = \ov{I} I^n. \]
\end{Theorem}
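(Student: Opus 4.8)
The inclusion $\ov{I}I^n\subseteq\ov{I^{n+1}}\cap I^n$ is automatic: $\ov{I}I^n\subseteq R\cdot I^n=I^n$, while $\ov{I}\,I^n\subseteq\ov{I}\,\ov{I^n}\subseteq\ov{I\cdot I^n}=\ov{I^{n+1}}$. So the entire content is the reverse inclusion $\ov{I^{n+1}}\cap I^n\subseteq\ov{I}I^n$, and since $I^{n+1}=I\cdot I^n\subseteq\ov{I}I^n$ we may argue modulo $I^{n+1}$. Writing $I=(x_1,\dots,x_d)$ with $x_1,\dots,x_d$ an $R$-regular sequence, so that $\gr_I(R)\cong(R/I)[X_1,\dots,X_d]$, and passing to leading forms, the whole family of identities ``$\ov{I^{n+1}}\cap I^n=\ov{I}I^n$ for all $n\ge1$'' is equivalent to the single statement that the kernel of the natural graded ring map
\[ \gr_I(R)\;\longrightarrow\;\bigoplus_{m\ge0}\ov{I^{\,m}}\big/\ov{I^{\,m+1}} \]
is generated as an ideal by its degree-zero part $\ov{I}/I\subseteq R/I=(\gr_I R)_0$. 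That is the form I would aim at.

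The plan is to run the Rees-algebra machinery alluded to in the introduction. Form the extended Rees algebra $B=R[It,t^{-1}]=\bigoplus_{m\in\Z}I^mt^m$, its integral closure $C=\bigoplus_{m\in\Z}\ov{I^{\,m}}t^m$ inside $R[t,t^{-1}]$, and the ideal $J=(t^{-1},x_1t,\dots,x_dt)$. The one decisive use of the hypothesis is that $t^{-1},x_1t,\dots,x_dt$ is a $B$-regular sequence: $t^{-1}$ is a unit of $R[t,t^{-1}]$, hence a nonzerodivisor, and modulo $t^{-1}$ the remaining elements become the variables $X_1,\dots,X_d$ of $B/t^{-1}B=\gr_I(R)=(R/I)[X_1,\dots,X_d]$, on which they form a regular sequence. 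Thus $\grade(J,B)=d+1$ and $\HH^i_J(B)=0$ for every $i\le d$. Since $t^{-1}$ is also a nonzerodivisor on $C$, the snake lemma applied to multiplication by $t^{-1}$ on $0\to B\to C\to C/B\to0$ identifies $0:_{C/B}t^{-1}$ with the kernel displayed above (up to one degree shift), and the long exact sequence in $\HH^\bullet_J(-)$, in which $\HH^i_J(B)=0$ for $i\le d$, reduces the computation of that torsion to a single graded strand of $\HH^\bullet_J(C)$. One then finishes, exactly as recalled in the introduction, via the lone vanishing $\HH^2_J(C)_0=0$, which already forces $\ov{I^{n+1}}\cap I^n=\ov{I}I^n$ for all $n\ge1$; this vanishing is read off the \v{C}ech complex of $C$ on $t^{-1},x_1t,\dots,x_dt$, using the $B$-regular sequence just exhibited.

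\textbf{The main obstacle.} Everything rests on $\HH^2_J(C)_0=0$, and behind it on controlling $C$, equivalently the normal filtration $\{\ov{I^{\,m}}\}$. When $R$ is analytically unramified, $C$ is module-finite over $B$ by Rees's theorem (see Theorem~\ref{reesau}) and the \v{C}ech/Koszul computation goes through cleanly. For an arbitrary Noetherian $R$ this finiteness may fail, so one first descends to a good ring: localizing at a prime containing $I$ keeps $x_1,\dots,x_d$ a regular sequence, and one then reduces to the reduced, hence analytically unramified, case --- a step requiring care because integral closure of powers of an ideal does not in general commute with completion. A more self-contained alternative is induction on $d$: replace one member of the regular sequence by a generic combination $\sum_i c_ix_i$, which has the same order as $I$ at every Rees valuation and so can be cut out compatibly with the normal filtration; the base case $d=1$ is elementary, since for a nonzerodivisor $x$ one has $\ov{(x^{\,m})}=x^{\,m}\widetilde R\cap R$, with $\widetilde R$ the integral closure of $R$ in its total ring of fractions, whence $\ov{(x^{\,n+1})}\cap(x^n)=x^{\,n+1}\widetilde R\cap x^nR=x^n\big(x\widetilde R\cap R\big)=\ov{(x)}(x^n)$.
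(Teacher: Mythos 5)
The paper itself contains no proof of this statement: it is quoted from Itoh \cite{itoh88}, who deduced it from the vanishing theorem recorded here as Theorem \ref{itohvanish}. Your sketch follows exactly that route, and its formal parts are correct: the easy inclusion $\ov{I}I^n\subseteq \ov{I^{n+1}}\cap I^n$, the translation into the statement that the kernel of $\gr_I(R)\to\oplus_{m\ge 0}\ov{I^m}/\ov{I^{m+1}}$ is generated in degree zero, the regularity of $t^{-1},x_1t,\dots,x_dt$ on $B=R[It,t^{-1}]$ (hence $\HH^i_J(B)=0$ for $i\le d$), the snake-lemma identification of $(0:_{C/B}t^{-1})$ with that kernel, and the $d=1$ base case via $\ov{(x^m)}=x^m\widetilde{R}\cap R$.

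The gap is the decisive step. You assert that $\HH^2_J(C)_0=0$ ``is read off the \v{C}ech complex of $C$ on $t^{-1},x_1t,\dots,x_dt$, using the $B$-regular sequence just exhibited.'' It is not: regularity of that sequence on $B$ controls $\HH^\bullet_J(B)$ only, says nothing about $\HH^\bullet_J(C)$ (the sequence need not be regular on $C$, and $\HH^2_J(C)$ does not vanish in all degrees), and the vanishing in degrees $\le 0$ is precisely the hard content of Itoh's Theorem 2 --- Theorem \ref{itohvanish} of this paper, later reproved by Hong--Ulrich --- whose proof is a substantial argument, not a grade computation. So as written you have reduced the theorem to the very result that carries all the weight and then invoked it without proof. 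The fallback reductions in your final paragraph are likewise only gestured at: passing to $R_{\mathrm{red}}$ does not preserve the hypothesis that $x_1,\dots,x_d$ is a regular sequence; and the induction on $d$ via a ``generic combination'' cut out compatibly with the normal filtration is exactly the kind of general-element statement (Itoh, Hong--Ulrich) that itself requires proof, and needs a local ring with infinite residue field even to formulate, so further reductions would be needed for a general Noetherian $R$. In short: a correct roadmap of Itoh's argument, but with the essential vanishing (or an equivalent substitute) assumed rather than proved.
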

Itoh proved this theorem as a consequence of the following vanishing theorem.
\begin{Theorem}\label{itohvanish}
	{\rm (\cite[Theorem 2]{itoh88},\cite[Theorem 1.2]{hongUlrich})} Let $R$ be a $d$-dimensional Noetherian ring and $I=(x_1,\dots,x_d)$ be an ideal generated by an $R$-regular sequence. Let  $\ov{\R}(I)=\oplus_{n\in \Z} \ov{I^n}t^n.$ Let $J=(t^{-1}, x_1t,\dots,x_rt)$, where $2 \le r \le d$. Then $\HH^2_J(\ov{\R}(I))_n=0$, for all $n\le 0$.
\end{Theorem}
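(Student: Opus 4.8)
The plan is to prove this --- Itoh's vanishing theorem --- in three stages: a formal reduction of the local cohomology to the extended Rees algebra, where a regular sequence becomes visible; a comparison of that algebra with $\ov{\R}(I)$ itself; and an induction on $d$ that carries the genuine content.

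First I would set the stage. Since $\dim R=d$ and $I$ is generated by a regular sequence of length $d$, I localize $R$ at a maximal ideal containing $I$ --- nothing is lost, because at a prime not containing $I$ the ideal $J$ generates the unit ideal of $R[t,t^{-1}]$ --- so $(R,\m)$ becomes local, $I$ is $\m$-primary, and $T:=\ov{\R}(I)$ is $\Z$-graded with Artinian degree-zero part. Let $A:=R[It,t^{-1}]=\bigoplus_{n\in\Z}I^nt^n$ be the extended Rees algebra; it is a subring of $T$ that already contains $t^{-1},x_1t,\dots,x_rt$, and since local cohomology with respect to a generated ideal is computed by the stable \v Cech complex of those generators, $\HH^2_J(T)\cong\HH^2_{\mathfrak a}(T)$ for $\mathfrak a=(t^{-1},x_1t,\dots,x_rt)A$. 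The one clean structural input is that $t^{-1},x_1t,\dots,x_rt$ is an $A$-regular sequence: $t^{-1}$ is a unit of $R[t,t^{-1}]\supseteq A$, hence a nonzerodivisor, and $A/t^{-1}A\cong\gr_I(R)\cong(R/I)[X_1,\dots,X_d]$ precisely because the $x_i$ form a regular sequence, with $x_1t,\dots,x_rt$ mapping to the first $r$ coordinate variables. Therefore $\grade(\mathfrak a,A)\ge r+1$, so $\HH^i_{\mathfrak a}(A)=0$ for $i\le r$, and in particular $\HH^2_{\mathfrak a}(A)=0$. Plugging this into the long exact sequence of $0\to A\to T\to C\to 0$, where $C:=T/A=\bigoplus_{n\ge1}(\ov{I^n}/I^n)t^n$ is concentrated in positive degrees, produces an injection $\HH^2_{\mathfrak a}(T)\hookrightarrow\HH^2_{\mathfrak a}(C)$; so it suffices to show $\HH^2_{\mathfrak a}(C)_n=0$ for all $n\le 0$.

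This last vanishing is not formal --- although $C$ lives in positive degrees, localizing it at the degree-$(+1)$ elements $x_it$ drags degrees downward --- so I would prove it, together with the full statement, by induction on $d$. After a faithfully flat base change making the residue field infinite, the inductive step passes to $R/yR$ for an element $y$ chosen general in a suitable sub-ideal of $I$, simultaneously superficial for the filtration $\{\ov{I^n}\}$ and compatible with the generators of $J$ (so that at least two of the $x_it$ survive in the quotient); then $R/yR$ has dimension $d-1$, the image of $I$ is again generated by a regular sequence, and --- the delicate point --- the filtration $\{(\ov{I^n}+yR)/yR\}$ agrees with $\{\ov{(I(R/yR))^n}\}$ outside finitely many degrees. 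Comparing $0\to T(1)\xrightarrow{\,yt\,}T\to T/ytT\to 0$ with the analogous object over $R/yR$, invoking the inductive hypothesis to kill $\HH^2$ (and $\HH^1$) of the $(d-1)$-dimensional object in nonpositive degrees, and carrying the single cohomological shift produced by multiplication by $yt$, should pin the vanishing for $T$ to exactly the range $n\le 0$. The base case $d=2$, which forces $r=2$, I would do by hand: there $\dim T=3$ and $t^{-1},x_1t,x_2t$ is a system of parameters for $T$, so $\HH^2_{\mathfrak a}(T)_n=0$ for $n\le 0$ reduces, via a direct \v Cech computation, to the intersection identities $\ov{I^{n+1}}\cap I^n=I^n\ov I$ for $n\ge 1$, which in dimension two follow from a short manipulation of an integral-dependence equation for an element of the intersection together with the single Koszul syzygy of the pair $x_1,x_2$.

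The step I expect to be the main obstacle is the reduction modulo $y$ in the induction. Regular-sequence ideals are essentially never integrally closed --- already $(x^2,y^2)$ in a polynomial ring has $xy$ in its integral closure --- so the defect $C=T/A$ is genuinely present, and it is exactly this defect that must be shown to contribute nothing in nonpositive degrees. Controlling how $\ov{I^n}$ sits between $I^n$ and $I^{n-k}$ under passage to $R/yR$, guaranteeing that no part of $\HH^2$ of the lower-dimensional object slips back past degree $0$, and arranging the superficial element to respect the generator structure of $J$, is the heart of the matter --- and is precisely what Itoh's original argument, and Hong--Ulrich's later refinement (which removes the genericity hypotheses via specialization and generic flatness), are set up to handle.
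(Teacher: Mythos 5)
The paper does not prove this theorem; it is cited verbatim from Itoh (Theorem~2) and Hong--Ulrich (Theorem~1.2), so there is no proof in the paper to compare against. On its own terms, your formal preliminaries are sound: localizing so that $I$ becomes an $\m$-primary parameter ideal; observing that $t^{-1},x_1t,\dots,x_rt$ is an $A$-regular sequence on the extended Rees algebra $A=R[It,t^{-1}]$ via $A/t^{-1}A\cong\gr_I(R)\cong(R/I)[X_1,\dots,X_d]$, hence $\HH^2_{\mathfrak a}(A)=0$; and the resulting injection $\HH^2_{\mathfrak a}(T)\hookrightarrow\HH^2_{\mathfrak a}(C)$ from $0\to A\to T\to C\to 0$. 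But the two steps that carry the actual content --- the base case and the specialization step --- are both genuine gaps.

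For the base case $d=r=2$, you claim the intersection identity $\ov{I^{n+1}}\cap I^n=I^n\ov I$ ``follows from a short manipulation of an integral-dependence equation together with the single Koszul syzygy.'' It does not. The coefficient-peeling argument you are modeling this on works for tight closure (as in Proposition~\ref{intersectTight} of this very paper, where the Frobenius power $q$ turns membership in $(I^{n+1})^{[q]}$ into a termwise condition on the $r_\alpha^q$, which the regular sequence then peels apart), but an integral dependence relation $a^m+c_1a^{m-1}+\cdots+c_m=0$ does not decompose along Koszul syzygies: from $a=\sum_{|\alpha|=n}r_\alpha x^\alpha\in\ov{I^{n+1}}$ there is no elementary way to see that the $r_\alpha$ can be chosen in $\ov I$. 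The dimension-two case of the Huneke--Itoh theorem has no known proof apart from Itoh's Rees-algebra cohomology route and Huneke's tight-closure argument (which needs equicharacteristic and Cohen--Macaulay hypotheses absent from the present statement); so the base case silently assumes the very result the induction is meant to produce. For the inductive step, the asserted compatibility $\{(\ov{I^n}+yR)/yR\}=\{\ov{I^n(R/yR)}\}$ in all but finitely many degrees is itself a hard theorem, false in general without hypotheses, and the statement you must cover assumes neither analytical unramification nor Cohen--Macaulayness nor an infinite residue field --- so even the existence of a suitable superficial $y$ and the Noetherianity of $\ov{\R}(I)$ are not free. You correctly name this the main obstacle, but leave it unaddressed; and arranging for $d-1$ of the images of $x_1,\dots,x_d$ to form a regular sequence in $R/yR$ while keeping $x_1t,\dots,x_rt$ among the generators of $J$ forces $r<d$, so the case $r=d$ needs a separate argument the sketch does not indicate.
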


As a consequence of the Huneke-Itoh intersection theorem, one can find the Hilbert-Samuel polynomial of the filtration $\f=\{\ov{I^n}\}$ if the normal reduction number of the filtration is at most 2. 
\begin{Proposition} \label{itoh}
	{\rm (\cite[Theorem 10]{itoh88})} Let $(R,\m)$ be  $d$-dimensional Cohen-Macaulay analytically unramified local ring and $I$ be an ideal generated by a system of parameters. Then $\ov{I^{n+1}} =\ov{I^2}I^{n-1}$ for all $n \geq 1$ if and only if for all $n\geq 1,$
	\[ \ell\left(\frac{R}{ \ov{I^{n+1}}}\right) = \ov{e}_0(I) \binom{n+d}{d} - \ov{e}_1(I) \binom{n+d-1}{d-1} + \ov{e}_2(I) \binom{n+d-2}{d-2} \]
	where 
	\[ \ov{e}_0(I)=\ell\left(\frac{R}{I}\right), \, \ov{e}_1(I)=\ell\left(\frac{\ov{I}}{I}\right) + \ell\left(\frac{\ov{I^2}}{I \ov{I}}\right), \, \ov{e}_2(I)=\ell\left(\frac{\ov{I^2}}{I\ov{I}}\right). \]
\end{Proposition}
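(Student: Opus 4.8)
The idea is that both implications reduce to a single \emph{unconditional} length formula for the fixed ideals $\ov{I^2}I^{n-1}$: for every $n\ge 1$,
\[ \ell\!\left(\frac{R}{\ov{I^2}I^{n-1}}\right)=\ov e_0\binom{n+d}{d}-\ov e_1\binom{n+d-1}{d-1}+\ov e_2\binom{n+d-2}{d-2}, \]
where $\ov e_0=\ell(R/I)$, $\ov e_1=\ell(\ov I/I)+\ell(\ov{I^2}/I\ov I)$ and $\ov e_2=\ell(\ov{I^2}/I\ov I)$. Granting this, the proposition follows at once: the containment $\ov{I^2}I^{n-1}\subseteq\ov{I^{n-1}}\,\ov{I^2}\subseteq\ov{I^{n+1}}$ always holds, so if $\ov{I^{n+1}}=\ov{I^2}I^{n-1}$ for all $n$ then $\ell(R/\ov{I^{n+1}})$ is the displayed polynomial; conversely, if $\ell(R/\ov{I^{n+1}})$ equals that polynomial then it equals $\ell(R/\ov{I^2}I^{n-1})$, and an inclusion of $\m$-primary ideals with equal colength is an equality.

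To establish the formula I would run $R$ through the chain $I^{n+1}\subseteq I^n\ov I\subseteq \ov{I^2}I^{n-1}$ and compute the two successive quotients. Because $R$ is Cohen--Macaulay and $I$ is a parameter ideal, $\gr_I(R)\cong (R/I)[X_1,\dots,X_d]$ is a polynomial ring, so $I^m/I^{m+1}$ is free over $R/I$ of rank $\binom{m+d-1}{d-1}$ and $\ell(R/I^{n+1})=\ell(R/I)\binom{n+d}{d}$. The quotient $I^n\ov I/I^{n+1}$ is the degree-$n$ component of the ideal of $\gr_I(R)$ generated by $\ov I/I\subseteq R/I$ (here one uses $I^{n+1}\subseteq I^n\ov I$), hence a free $(\ov I/I)$-module of rank $\binom{n+d-1}{d-1}$, of length $\binom{n+d-1}{d-1}\ell(\ov I/I)$. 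For the middle quotient I invoke the Huneke--Itoh intersection theorem (Theorem~\ref{itohthm}), which gives $\ov{I^2}\cap I=I\ov I$ and $\ov{I^{n+1}}\cap I^n=I^n\ov I$; since $I^n\ov I\subseteq \ov{I^2}I^{n-1}\subseteq\ov{I^{n+1}}$, this forces $\ov{I^2}I^{n-1}\cap I^n=I^n\ov I$, so
\[ \frac{\ov{I^2}I^{n-1}}{I^n\ov I}\;\cong\;\frac{\ov{I^2}I^{n-1}+I^n}{I^n}, \]
which is precisely the degree-$(n-1)$ component of the ideal of $\gr_I(R)$ generated by $(\ov{I^2}+I)/I\cong \ov{I^2}/(\ov{I^2}\cap I)=\ov{I^2}/I\ov I$, of length $\binom{n+d-2}{d-1}\ell(\ov{I^2}/I\ov I)$. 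Adding the two contributions to $\ell(R/I^{n+1})$ and using Pascal's identity $\binom{n+d-2}{d-1}=\binom{n+d-1}{d-1}-\binom{n+d-2}{d-2}$ together with $\ell(\ov I/I)=\ov e_1-\ov e_2$ rearranges the sum into the asserted polynomial.

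The heart of the argument is thus Theorem~\ref{itohthm}: it is used twice, to pin down the intersections $\ov{I^2}I^{n-1}\cap I^n$ and $\ov{I^2}\cap I$, after which Cohen--Macaulayness reduces the length count to counting free graded components of an ideal of a polynomial ring generated in degree $0$. I do not expect a serious obstacle; the only point to watch is the behaviour of $\binom{n+d-2}{d-2}$ and $\binom{n+d-2}{d-1}$ in low dimensions ($d=1,2$), where one checks directly, with the convention $\binom{m}{-1}=0$, that the computation and the stated formula still agree. Conceptually, the ideals $\ov{I^2}I^{n-1}$ \emph{always} realise the ``normal reduction number $\le 2$'' Hilbert function, and the hypothesis $\ov{I^{n+1}}=\ov{I^2}I^{n-1}$ is exactly the condition making $\ov{I^{n+1}}$ agree with them.
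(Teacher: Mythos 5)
Your proof is correct, and it is essentially the same computation that the paper carries out in the proof of Theorem~\ref{polyfilt}, which generalizes this proposition; the paper itself just cites the proposition from Itoh. The same ingredients appear in both: the chain $I^{n+1}\subseteq I^n\ov I\subseteq \ov{I^2}I^{n-1}\subseteq\ov{I^{n+1}}$, freeness of the graded pieces of $\gr_I(R)$ over $R/I$, and the Huneke--Itoh intersection (which is exactly the $HI_0$ condition underlying Lemma~\ref{iso}) to identify $\ov{I^2}I^{n-1}/I^n\ov I$ with $\ell(\ov{I^2}/I\ov I)$ copies of $I^{n-1}/I^n$, then Pascal's identity to repackage the binomials. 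The one genuine presentational difference, which is a small improvement in clarity, is that you isolate an \emph{unconditional} length formula for $\ell(R/\ov{I^2}I^{n-1})$ and then get both implications for free from the inclusion $\ov{I^2}I^{n-1}\subseteq\ov{I^{n+1}}$ (equality of colengths for nested $\m$-primary ideals forces equality). The paper instead carries a remainder term $-\ell\bigl(I_{n+1}/I^{n-1}I_2\bigr)$ through the computation and reads off the equivalence from its vanishing; the two bookkeepings are interchangeable, but yours makes the logical structure of the ``if and only if'' more transparent.
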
 

For all undefined terms, we refer to \cite{matsumuraCRT}.

\section{The condition $HI_p$}
Let $R$ be a $d$-dimensional  Noetherian local ring. Let $x_1,\dots,x_d$ be an $R$-regular sequence and $I=(x_1,\dots,x_d)$. Let $\f=\{I_n\}_{n \in \Z}$ be an $I$-admissible filtration, where $I_n=R$, for all $n \le 0$.

\begin{Definition}{\rm
	Let $R$ be a $d$-dimensional  Noetherian local ring. Let $I$ be an ideal generated by an $R$-regular sequence. An $I$-admissible filtration $\mathcal{F}=\{I_n\}$ is said to satisfy \emph{the condition $HI_{p}$} if for all $n \geq p$, 
	\begin{align}\label{HIpcond}
	I_{n+1}\cap I^{n-p}=I_{p+1}I^{n-p}.
	\end{align}
}\end{Definition}

\begin{Remark}\label{sec3:rmkOnReductionNumberImpliesHIp}{\rm
		For any $I$-admissible filtration $\f=\{I_n\}$ with reduction number $r$, the condition $HI_p$ is true when $p\ge r-1$. Observe that if $m \ge r$, then $I_{m+j} = I^jI_m$ for all $j \ge 0.$ Thus if $p \ge r-1$, then $n+1 = (p+1)+(n-p)$ and hence for all $n \ge p$,
		\[ I_{n+1} \cap I^{n-p} = I_{p+1}I^{n-p} \cap I^{n-p} = I_{p+1}I^{n-p}. \]
}\end{Remark}

A close examination of the proof of Proposition \ref{itoh} shows that the result is also true for any $I$-admissible filtration $\{I_n\}$ satisfying the condition $HI_0$. We prove a lemma first.

\begin{Lemma}\label{iso}
	Let $(R,\m)$ be  $d$-dimensional Cohen-Macaulay local ring and $I=(x_1,\dots,x_d)$ be an ideal generated by a system of parameters. Let $\f=\{I_n\}$ be an $I$-admissible filtration satisfying the condition $HI_{k-1}$, for some $k\ge 1$. Then $\text{for all } n\ge k$,
	\begin{align}\label{sec1:lem1:eq1}
	\frac{I^{n-k}I_{k+1}}{I^{n-k+1}I_k}\cong \frac{I^{n-k}}{I^{n-k+1}}\bigotimes_R\frac{I_{k+1}}{II_k}.
	\end{align}
	In particular for all $n \ge k$,
	\begin{align*}
	\ell\left(\frac{I^{n-k}I_{k+1}}{I^{n-(k-1)}I_k}\right)=\binom{n+d-k-1}{d-1}\ell\left(\frac{I_{k+1}}{II_k}\right).
	\end{align*}
\end{Lemma}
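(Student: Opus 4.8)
\emph{Proof proposal.} The plan is to realize the isomorphism \eqref{sec1:lem1:eq1} as the one induced by multiplication by the degree-$(n-k)$ monomials in the regular sequence $x_1,\dots,x_d$. Write $m=n-k\ge 0$ and let $\mu_\alpha=x_1^{\alpha_1}\cdots x_d^{\alpha_d}$ with $|\alpha|=m$ denote these monomials, of which there are $N=\binom{m+d-1}{d-1}$. The input I would use is the classical fact that a system of parameters in a Cohen--Macaulay ring is a regular sequence, so $\gr_I(R)\cong (R/I)[X_1,\dots,X_d]$; in particular $I^m/I^{m+1}$ is a free $R/I$-module on the images $\bar\mu_\alpha$. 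Since $I_{k+1}/II_k$ is an $R/I$-module (as $II_{k+1}\subseteq II_k$), this gives $\frac{I^m}{I^{m+1}}\otimes_R\frac{I_{k+1}}{II_k}\cong\bigl(\frac{I_{k+1}}{II_k}\bigr)^{\oplus N}$, so it suffices to identify $\frac{I^mI_{k+1}}{I^{m+1}I_k}$ with this direct sum.

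To that end I would consider the surjective $R$-linear map $\psi\colon\bigoplus_{|\alpha|=m}I_{k+1}\to I^mI_{k+1}$, $(b_\alpha)\mapsto\sum_\alpha\mu_\alpha b_\alpha$, which is onto because $\{\mu_\alpha\}$ generates $I^m$. Since $\mu_\alpha\cdot II_k\subseteq I^mII_k=I^{m+1}I_k$, it descends to a surjection $\bar\psi\colon\bigoplus_{|\alpha|=m}\frac{I_{k+1}}{II_k}\twoheadrightarrow\frac{I^mI_{k+1}}{I^{m+1}I_k}$, and everything reduces to proving $\bar\psi$ injective.

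For injectivity, suppose $\sum_\alpha\mu_\alpha b_\alpha\in I^{m+1}I_k$ with each $b_\alpha\in I_{k+1}$. Using $I^{m+1}I_k=I\cdot(I^mI_k)$ together with the fact that the $\mu_\alpha$ generate $I^m$, one rewrites this membership as $\sum_\alpha\mu_\alpha b_\alpha=\sum_\alpha\mu_\alpha\bigl(\sum_{i=1}^d x_ic_{i\alpha}\bigr)$ for suitable $c_{i\alpha}\in I_k$; hence $\sum_\alpha\mu_\alpha b_\alpha'=0$, where $b_\alpha':=b_\alpha-\sum_i x_ic_{i\alpha}$. Reducing this relation modulo $I^{m+1}$ and invoking the $R/I$-linear independence of $\{\bar\mu_\alpha\}$ forces $b_\alpha'\in I$ for every $\alpha$. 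On the other hand $b_\alpha'\in I_{k+1}$, since $b_\alpha\in I_{k+1}$ and $x_ic_{i\alpha}\in II_k\subseteq I_1I_k\subseteq I_{k+1}$. Now the hypothesis $HI_{k-1}$, specialized to $n=k$, reads $I_{k+1}\cap I^1=I_kI^1$, i.e. $I_{k+1}\cap I=II_k$; therefore $b_\alpha'\in II_k$, and so $b_\alpha=b_\alpha'+\sum_i x_ic_{i\alpha}\in II_k$. Thus $\ker\bar\psi=0$, which yields \eqref{sec1:lem1:eq1}.

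The ``in particular'' is then a length count. Since $II_k\supseteq I^{k+1}$ is $\m$-primary, $I_{k+1}/II_k$ has finite length, and the isomorphism gives $\ell\bigl(\frac{I^{n-k}I_{k+1}}{I^{n-(k-1)}I_k}\bigr)=N\,\ell\bigl(\frac{I_{k+1}}{II_k}\bigr)=\binom{n+d-k-1}{d-1}\ell\bigl(\frac{I_{k+1}}{II_k}\bigr)$, using $N=\binom{m+d-1}{d-1}=\binom{n-k+d-1}{d-1}$ and $n-(k-1)=m+1$. I expect the only genuinely delicate point to be the injectivity of $\bar\psi$, and within it the leverage comes from combining two ingredients of different flavours: the polynomiality of $\gr_I(R)$ pins down the $b_\alpha'$ only up to $I$, while the single instance $I_{k+1}\cap I=II_k$ of the $HI$-condition upgrades ``$b_\alpha'\in I$ and $b_\alpha'\in I_{k+1}$'' to ``$b_\alpha'\in II_k$''. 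Apart from that, the only care needed is in justifying the rewriting of a general element of $I^{m+1}I_k$ in the form $\sum_\alpha\mu_\alpha(\sum_i x_ic_{i\alpha})$, which is routine.
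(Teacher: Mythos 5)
Your proof is correct, and it takes a route that genuinely differs from the paper's. The paper factors the isomorphism through the intermediate module $(I^{n-k}I_{k+1}+I^{n-k+1})/I^{n-k+1}$ and proves two isomorphisms separately: the first amounts to showing $I^{n-k}I_{k+1}\cap I^{n-k+1}=I^{n-k+1}I_k$, which the paper deduces from the $HI_{k-1}$ intersection $I_{n+1}\cap I^{n-(k-1)}=I_kI^{n-(k-1)}$ at each level $n\ge k$; the second is the multiplication map out of the tensor product, and there the only ingredient is $I\cap I_{k+1}=II_k$. You instead realize the isomorphism in a single step as multiplication by the degree-$(n-k)$ monomials $\mu_\alpha$, and the only place the filtration hypothesis enters is the upgrade from $b_\alpha'\in I\cap I_{k+1}$ to $b_\alpha'\in II_k$, which uses only the $n=k$ instance $I_{k+1}\cap I=II_k$ of $HI_{k-1}$. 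In flavour this resembles the paper's second isomorphism argument (both rely on freeness of $I^{n-k}/I^{n-k+1}$ over $R/I$ and on $I\cap I_{k+1}=II_k$), but by skipping the first factorization you avoid any appeal to $HI_{k-1}$ at levels $n>k$, and so you actually establish the lemma under the strictly weaker hypothesis $I_{k+1}\cap I=II_k$ alone --- something the paper's two-step proof conceals. One small point worth making explicit when you call a step ``routine'': rewriting an element of $I^{m+1}I_k$ as $\sum_\alpha\mu_\alpha e_\alpha$ with $e_\alpha\in II_k$ requires peeling one $x_i$ off each degree-$(m+1)$ monomial and regrouping by the resulting degree-$m$ monomial; this is exactly what lets you subtract the two representations and land in $\ker$ of the degree-$m$ piece of $\gr_I(R)$.
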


\begin{proof}
	The isomorphism in \eqref{sec1:lem1:eq1} will be proved once we establish the following $R$-module isomorphisms
	\begin{align*}
	\frac{I^{n-k}I_{k+1}}{I^{n-k+1}I_k}\cong \frac{I^{n-k}I_{k+1}+I^{n-k+1}}{I^{n-k+1}}, && \frac{I^{n-k}I_{k+1}+I^{n-k+1}}{I^{n-k+1}}\cong \frac{I^{n-k}}{I^{n-k+1}}\bigotimes_R\frac{I_{k+1}}{II_k}.
	\end{align*}
	To prove the first isomorphism notice that,
	\begin{align*}
	\frac{I^{n-k}I_{k+1}+I^{n-k+1}}{I^{n-k+1}}\cong\frac{I^{n-k}I_{k+1}}{I^{n-k}I_{k+1}\cap I^{n-k+1}}.
	\end{align*}
	Thus it is enough to show that for all $n \ge k$, $I^{n-k}I_{k+1}\cap I^{n-k+1}=I^{n-k+1}I_k$. But 
	\begin{align*}
	I^{n-k+1}I_k\subseteq I^{n-k}I_{k+1}\cap I^{n-k+1}\subseteq I_{n+1}\cap I^{n-(k-1)}=I_kI^{n-k+1}
	\end{align*}
	where the last equality is due to the hypothesis $I_{n+1}\cap I^{n-(k-1)}=I_{k}I^{n-(k-1)}$. Thus it follows that $I^{n-k}I_{k+1}\cap I^{n-k+1}=I^{n-k+1}I_k$. 
	
	Now we prove the second isomorphism. Consider the natural multiplication map 
	\begin{align*}
	\frac{I^{n-k}}{I^{n-k+1}}\bigotimes_R\frac{I_{k+1}}{II_k}\longrightarrow\frac{I^{n-k}I_{k+1}+I^{n-k+1}}{I^{n-k+1}}.
	\end{align*}
	Let $\{M_j\}$ be a set of monomials in $x_1,\dots,x_d$ of degree $n-k$. Let $\sum a_jM_j\in I^{n-k+1}$ with $a_j\in I_{k+1}$. Suppose $\sum a_jM_j=\sum b_iM_i$ where $b_i\in I$. Since $x_1,\dots,x_d$ is a regular sequence, $a_j-b_j\in I$ and hence $a_j\in I$. The assumption implies that $I\cap I_{k+1}=I_kI$. This proves the isomorphism in \eqref{sec1:lem1:eq1}. 
	
	Finally, as $I^{n-k}/I^{n-k+1}$ is a free $R/I$-module for all $n\ge k$, \eqref{sec1:lem1:eq1} gives us 
	\[ \ell\left(\frac{I^{n-k}I_{k+1}}{I^{n-(k-1)}I_k}\right) = \binom{n+d-k-1}{d-1} \ell\left(\frac{I_{k+1}}{II_k}\right). \]
\end{proof}

The following theorem generalizes \cite[Theorem 10]{itoh88}.
\begin{Theorem}\label{polyfilt}
	Let $(R,\m)$ be  $d$-dimensional Cohen-Macaulay local ring and $I$ be an ideal generated by a system of parameters. Let $\f=\{I_n\}$ be an $I$-admissible filtration satisfying the condition $HI_0$. Then $I_{n+1} = I_2I^{n-1}$ for all $n \geq 1$ if and only if for all $n\geq 0,$
	\[ \ell\left(\frac{R}{I_{n+1}}\right)=e_0(\f)\binom{n+d}{d}- e_1(\f)\binom{n+d-1}{d-1} + e_2(\f)\binom{n+d-2}{d-2} \]
	where 
	\[ e_0(\f)=\ell\left(\frac{R}{I}\right), \, e_1(\f)= \left[ \ell\left(\frac{I_1}{I}\right) + \ell\left(\frac{I_2}{I I_1} \right) \right], \, e_2(\f)=\ell\left(\frac{I_2}{II_1}\right) \]
	and $e_i(\f)=0$, for all $3\le i \le d$.
\end{Theorem}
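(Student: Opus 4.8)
The plan is to deduce both implications from a single colength identity that holds whenever $\f$ satisfies $HI_0$, with no hypothesis on the reduction number: for all $n\ge 1$,
\[
\ell\left(\frac{R}{I_2I^{n-1}}\right)=\ell\left(\frac{R}{I}\right)\binom{n+d}{d}-\ell\left(\frac{I_1}{I}\right)\binom{n+d-1}{d-1}-\ell\left(\frac{I_2}{II_1}\right)\binom{n+d-2}{d-1},
\]
and, after the single use of Pascal's rule $\binom{n+d-1}{d-1}=\binom{n+d-2}{d-1}+\binom{n+d-2}{d-2}$ to split off the $\ell(I_2/II_1)$ term, the right-hand side equals $e_0(\f)\binom{n+d}{d}-e_1(\f)\binom{n+d-1}{d-1}+e_2(\f)\binom{n+d-2}{d-2}$ with $e_0(\f)=\ell(R/I)$, $e_1(\f)=\ell(I_1/I)+\ell(I_2/II_1)$ and $e_2(\f)=\ell(I_2/II_1)$.

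To prove the identity I would work with the chain $I^{n+1}=I\cdot I^n\subseteq I_1I^n\subseteq I_2I^{n-1}$ — valid since $I\subseteq I_1$ and $I_1I\subseteq I_1I_1\subseteq I_2$ — which gives
\[
\ell\left(\frac{R}{I_2I^{n-1}}\right)=\ell\left(\frac{R}{I^{n+1}}\right)-\ell\left(\frac{I_1I^n}{I^{n+1}}\right)-\ell\left(\frac{I_2I^{n-1}}{I_1I^n}\right).
\]
Because $R$ is Cohen--Macaulay and $I$ is a parameter ideal, $\gr_I(R)$ is a polynomial ring over $R/I$; hence $\ell(R/I^{n+1})=\ell(R/I)\binom{n+d}{d}$, and $I^n/I^{n+1}$ is $R/I$-free on the classes of the degree-$n$ monomials in the generators of $I$. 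For the middle term, the natural surjection $I^n\otimes_R(I_1/I)\to I_1I^n/I^{n+1}$ is injective precisely because of that freeness, so this length is $\binom{n+d-1}{d-1}\ell(I_1/I)$. The last term is exactly the length furnished by Lemma \ref{iso} with $k=1$, namely $\binom{n+d-2}{d-1}\ell(I_2/II_1)$ for $n\ge 1$; this is the only point in the whole argument where the hypothesis $HI_0$ enters. Summing the three contributions yields the displayed identity.

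Granting this, the theorem follows at once. If $I_{n+1}=I_2I^{n-1}$ for all $n\ge 1$, the identity gives the asserted Hilbert-function formula for $n\ge 1$, while for $n=0$ it is the tautology $\ell(R/I_1)=\ell(R/I)-\ell(I_1/I)=e_0(\f)-e_1(\f)+e_2(\f)$; since $\{\binom{n+d-i}{d-i}\}_{i=0}^d$ is a basis for polynomials of degree $\le d$, comparing with the normalized form of the Hilbert polynomial reads off the stated $e_0(\f),e_1(\f),e_2(\f)$ and forces $e_i(\f)=0$ for $3\le i\le d$. Conversely, if the Hilbert function of $\f$ equals that polynomial, then $\ell(R/I_{n+1})=\ell(R/I_2I^{n-1})$ for every $n\ge 1$; since $I_2I^{n-1}\subseteq I_{n+1}$ always (as $I^{n-1}\subseteq I_{n-1}$ and $\f$ is a filtration), equality of colengths forces $I_{n+1}=I_2I^{n-1}$. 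The only real work is the colength identity, in which the sole appeal to $HI_0$, through Lemma \ref{iso}, plays exactly the role of the Huneke--Itoh intersection theorem in Itoh's proof of Proposition \ref{itoh}, so the argument is a direct abstraction of his; the points needing care are the index ranges (Lemma \ref{iso} with $k=1$ is available only for $n\ge 1$, so $n=0$ must be checked by hand) and the one-line Pascal reorganization into the normalized form of the Hilbert polynomial.
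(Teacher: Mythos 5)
Your proof is correct and follows essentially the same route as the paper's: both peel off lengths along the chain $I^{n+1}\subseteq I_1I^n\subseteq I_2I^{n-1}\subseteq I_{n+1}$, invoke Lemma \ref{iso} with $k=1$ (the sole place $HI_0$ enters) to evaluate $\ell(I_2I^{n-1}/I_1I^n)$, and then read off the equivalence from the vanishing of the remaining term $\ell(I_{n+1}/I_2I^{n-1})$. Your presentation is a bit tidier in isolating the clean colength identity for $\ell(R/I_2I^{n-1})$ and in explicitly checking $n=0$, the containment $I_2I^{n-1}\subseteq I_{n+1}$, and both directions of the equivalence, but the underlying argument is the same.
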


\begin{proof}
	First, notice that $I^nI_1 \subseteq I_{n+1}$. So we have
	\begin{align*}
	\ell\left(\frac{R}{I_{n+1}}\right)
	&= \ell\left(\frac{R}{I^nI_1}\right) - \ell\left(\frac{I_{n+1}}{I^nI_1}\right)\\
	&= \ell\left(\frac{R}{I^n}\right) + \ell\left(\frac{I^n}{I^nI_1}\right) - \ell\left(\frac{I_{n+1}}{I^nI_1}\right)\\
	&= \ell\left(\frac{R}{I}\right){n+d-1\choose d} + \ell\left(\frac{I^n}{I^nI_1}\right) - \ell\left(\frac{I_{n+1}}{I^nI_1}\right).
	\end{align*}
	Since $I^n/I^{n+1}\otimes_R R/I_1 \simeq I^n/I^nI_1$ and $I^n/I^{n+1}$ is a free $R/I$-module, we have 
	\begin{align*}
	\ell\left(\frac{R}{I_{n+1}}\right)
	&= \ell\left(\frac{R}{I}\right){n+d-1\choose d} + {n+d-1\choose d-1}\ell\left(\frac{R}{I_1}\right) - \ell\left(\frac{I_{n+1}}{I^nI_1}\right)\\
	&= \ell\left(\frac{R}{I}\right){n+d-1\choose d} + {n+d-1\choose d-1}\left[ \ell\left(\frac{R}{I}\right) - \ell\left(\frac{I_1}{I}\right) \right] - \ell\left(\frac{I_{n+1}}{I^nI_1}\right)\\
	&= \ell\left(\frac{R}{I}\right){n+d\choose d} - {n+d-1\choose d-1}\ell\left(\frac{I_1}{I}\right) - \ell\left(\frac{I_{n+1}}{I^nI_1}\right)\\
	&= \ell\left(\frac{R}{I}\right){n+d\choose d} - {n+d-1\choose d-1}\ell\left(\frac{I_1}{I}\right) - \ell\left(\frac{I^{n-1}I_2}{I^nI_1}\right) - \ell\left(\frac{I_{n+1}}{I^{n-1}I_2}\right).
	\end{align*}
	As the filtration $\f$ satisfies the condition $HI_0$, using Lemma \ref{iso} it follows that for $n \ge 1$,
	\[ \frac{I^{n-1}I_{2}}{I^{n}I_1}\cong \frac{I^{n-1}}{I^{n}}\bigotimes_R\frac{I_{2}}{II_1}. \]
	As $I^{n-1}/I^n$ is a free $R/I$-module, we conclude that  \[ \ell\left(\frac{I^{n-1}I_2}{I^nI_1}\right) = \ell\left(\frac{I_2}{II_1}\right){n+d-2\choose d-1} = \ell\left(\frac{I_2}{II_1}\right) \left[{n+d-1\choose d-1}-{n+d-2\choose d-2}\right]. \]
	Thus for $n \ge 1$,
	\begin{align*}
	\ell\left(\frac{R}{I_{n+1}}\right)
	&= \ell\left(\frac{R}{I}\right)\binom{n+d}{d}-\left[ \ell\left(\frac{I_1}{I}\right) + \ell\left(\frac{I_2}{I I_1} \right) \right] \binom{n+d-1}{d-1}\\
	& + \ell\left(\frac{I_2}{II_1}\right)\binom{n+d-2}{d-2}- \ell\left(\frac{I_{n+1}}{I^{n-1}I_2}\right).
	\end{align*}
\end{proof} 
We now find a formula of the Hilbert-Samuel polynomial of an $I$-admissible filtration $\f$ which satisfies the condition $HI_p$, for all $p\le r(\f)-2$.
\begin{Lemma}\label{binomialEquality}
	The following equality holds for all $d,k \ge 1$, 
	\begin{align*}
	\binom{n+d-k-1}{d-1}=\sum_{j=1}^{k+1} (-1)^{j-1}\binom{k}{j-1}\binom{n+d-j}{d-j}.
	\end{align*}
\end{Lemma}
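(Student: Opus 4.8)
The plan is to read every binomial coefficient in the statement as the coefficient of a monomial in a geometric-type power series and thereby reduce the identity to one line of formal power series arithmetic over $\Z$. The tool is the expansion $\dfrac{1}{(1-x)^{m}}=\sum_{i\ge 0}\binom{m+i-1}{i}x^{i}$, valid as a formal power series for every integer $m\ge 1$, which says precisely that $\binom{n+d-j}{d-j}=[x^{d-j}]\,(1-x)^{-(n+1)}$ for every $j\ge 1$ (when $j>d$ both sides vanish, so enlarging the range of $j$ costs nothing).

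First I would substitute $i=j-1$ and factor $x^{d-1}$ out of the coefficient extraction to rewrite the right-hand side as
\[
\sum_{i=0}^{k}(-1)^{i}\binom{k}{i}\,[x^{d-1-i}]\,(1-x)^{-(n+1)}
=[x^{d-1}]\Big((1-x)^{-(n+1)}\sum_{i=0}^{k}(-1)^{i}\binom{k}{i}x^{i}\Big).
\]
The inner sum is the binomial expansion of $(1-x)^{k}$, so the bracket collapses to $(1-x)^{-(n+1)}(1-x)^{k}=(1-x)^{-(n+1-k)}$; reading off the coefficient of $x^{d-1}$ with the same expansion gives $\binom{(n+1-k)+(d-1)-1}{d-1}=\binom{n+d-k-1}{d-1}$, which is the left-hand side.

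I do not expect a genuine obstacle here; the only point needing a word of care is the hypothesis $m\ge 1$ in the power-series expansion, which in the computation above means $n+1-k\ge 1$, i.e. $n\ge k$. Since for fixed $d$ and $k$ both sides are polynomials in $n$ of degree at most $d-1$ (the left side visibly, the right side as a $\Z$-linear combination of the polynomials $\binom{n+d-j}{d-j}$), equality for the infinitely many values $n\ge k$ forces equality of polynomials, hence for all $n$. A generating-function-free alternative, in case a more elementary argument is preferred: after the substitution $i=j-1$ and the complementary symmetry $\binom{n+d-1-i}{d-1-i}=\binom{n+d-1-i}{n}$, the claim becomes the classical identity $\sum_{i=0}^{k}(-1)^{i}\binom{k}{i}\binom{N-i}{m}=\binom{N-k}{m-k}$ with $N=n+d-1$ and $m=n$, which one proves by induction on $k$ using Pascal's rule $\binom{k+1}{i}=\binom{k}{i}+\binom{k}{i-1}$ together with the inductive hypothesis applied once with $N$ and once with $N-1$, and then undoes the symmetry to reach $\binom{n+d-k-1}{d-1}$.
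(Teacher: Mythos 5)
Your generating-function argument is correct and takes a genuinely different route from the paper's. The paper proves the identity by induction on $k$: it applies Pascal's rule to write $\binom{n+d-k-2}{d-1}$ as a difference of two instances of the inductive hypothesis (one in dimension $d$, one in dimension $d-1$), then regroups the two sums and uses Pascal's rule a second time to recombine $\binom{k}{i-1}+\binom{k}{i-2}=\binom{k+1}{i-1}$. Your main argument instead reads each $\binom{n+d-j}{d-j}$ as $[x^{d-j}](1-x)^{-(n+1)}$, recognizes $\sum_i(-1)^i\binom{k}{i}x^i=(1-x)^k$, and collapses the whole right-hand side to $[x^{d-1}](1-x)^{-(n+1-k)}$; this is shorter and makes the cancellation conceptual rather than computational. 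You are also right to flag the $n\ge k$ restriction coming from the expansion of $(1-x)^{-m}$ and to close the gap by observing that both sides are polynomials in $n$ of degree $\le d-1$, so agreement on infinitely many $n$ suffices. (In the paper's application the identity is only invoked for $n\gg 0$, so this point is harmless either way, but it is good that you addressed it.) Your sketched alternative --- the substitution $i=j-1$, the symmetry $\binom{n+d-1-i}{d-1-i}=\binom{n+d-1-i}{n}$, and an induction on $k$ via Pascal's rule using the hypothesis at $N$ and at $N-1$ --- is essentially the paper's proof in slightly different clothing.
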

\begin{proof}
	We derive the formula by applying  induction on the difference $(d-1)-(d-k-1) =k$. The result is clearly true for $k=1$, establishing the base case of induction. Suppose the result is true for $k$. We prove the result for $k+1$. Observe that
	\begin{align*}	\binom{n+d-(k+1)-1}{d-1}&=\binom{n+d-k-1}{d-1}-\binom{n+d-(k+1)-1}{d-2}\\
	&=\binom{n+d-k-1}{d-1}-\binom{n+(d-1)-k-1}{(d-1)-1}.
	\end{align*}
	Using the induction hypothesis, we have
	\begin{align*}
	\binom{n+d-k-2}{d-1}
	&=\sum_{i=1}^{k+1} (-1)^{i-1}\binom{k}{i-1}\binom{n+d-i}{d-i}\\
	&\hskip5cm -\sum_{j=1}^{k+1} (-1)^{j-1}\binom{k}{j-1}\binom{n+(d-1)-j}{(d-1)-j}\\
	&=\sum_{i=1}^{k+1} (-1)^{i-1}\binom{k}{i-1}\binom{n+d-i}{d-i}-\sum_{j=2}^{k+2} (-1)^{j-2}\binom{k}{j-2}\binom{n+d-j}{d-j}.
	\end{align*}
	Rearranging the terms,
	\begin{align*}
	\binom{n+d-k-2}{d-1}
	&=\binom{n+d-1}{d-1}+\sum_{i=2}^{k+1} (-1)^{i-1}\left[\binom{k}{i-1}+\binom{k}{i-2}\right]\binom{n+d-i}{d-i}\\
	&\hskip5cm +(-1)^{k+1}\binom{n+d-2-k}{d-2-k}\\
	&=\binom{n+d-1}{d-1}+\sum_{i=2}^{k+1} (-1)^{i-1}\binom{k+1}{i-1}\binom{n+d-i}{d-i}\\
	&\hskip5cm +(-1)^{k+1}\binom{n+d-2-k}{d-2-k}\\
	&=\sum_{i=1}^{k+2} (-1)^{i-1}\binom{k+1}{i-1}\binom{n+d-i}{d-i}.
	\end{align*}
	Thus the result holds by induction. 
\end{proof}
\begin{Theorem}\label{HSpoly}
	Let $R$ be a $d$-dimensional Cohen-Macaulay Noetherian local ring and $I$ be an ideal generated by an $R$-regular sequence. Let $\f=\{I_n\}$ be an $I$-admissible filtration and $r(\f)$ denote the reduction number of $\f$. If $\f$ satisfies the condition $HI_p$, for $p\leq r=r(\mathcal{F})-2$, then for all $i=1,\dots, d$,
	\begin{align*}
	e_i(\f)=\sum_{k=i-1}^{\infty}\binom{k}{i-1}\ell\left(\frac{I_{k+1}}{II_k}\right).
	\end{align*}
\end{Theorem}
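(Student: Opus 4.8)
The plan is to evaluate $\ell(R/I_{n+1})$ for $n\gg 0$ by peeling it off along a telescoping chain of ideals, feed each successive quotient into Lemma~\ref{iso}, and then recognize the resulting expression inside the binomial–coefficient basis used to define $P_{\f}$. To set up: first I would observe that $\f$ satisfies $HI_p$ for \emph{every} $p\ge 0$ — the cases $p\le r(\f)-2$ are the hypothesis and the cases $p\ge r(\f)-1$ are Remark~\ref{sec3:rmkOnReductionNumberImpliesHIp} — so $\f$ satisfies $HI_{k-1}$ for every $k\ge 1$ and Lemma~\ref{iso} is available at every level $k$. I would also record from the observation in that remark that $I_{m+j}=I^jI_m$ whenever $m\ge r(\f)$; in particular $\ell(I_{k+1}/II_k)=0$ for all $k\ge r(\f)$ (so the sum in the statement is a finite one), and for $n\ge r(\f)-1$ one has the chain
\[
I^nI_1\ \subseteq\ I^{n-1}I_2\ \subseteq\ \cdots\ \subseteq\ I^{n-r(\f)+1}I_{r(\f)}\ =\ I_{n+1}.
\]

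Next I would get a closed formula for the Hilbert function. Writing $\ell(R/I_{n+1})=\ell(R/I^nI_1)-\ell(I_{n+1}/I^nI_1)$ and breaking the second length along the chain, for $n\ge r(\f)-1$ I obtain
\[
\ell\!\left(\frac{I_{n+1}}{I^nI_1}\right)=\sum_{k\ge 1}\ell\!\left(\frac{I^{n-k}I_{k+1}}{I^{n-k+1}I_k}\right)=\sum_{k\ge 1}\binom{n+d-k-1}{d-1}\,\ell\!\left(\frac{I_{k+1}}{II_k}\right),
\]
the last equality being Lemma~\ref{iso} applied term by term (legitimate since the nonzero summands have $k\le r(\f)-1\le n$). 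For $\ell(R/I^nI_1)$ I would use the standard facts for $I$ generated by a regular sequence in a Cohen--Macaulay ring, namely $\ell(R/I^n)=\ell(R/I)\binom{n+d-1}{d}$ and $\ell(I^n/I^nI_1)=\binom{n+d-1}{d-1}\ell(R/I_1)$, exactly as in the proof of Theorem~\ref{polyfilt}. Feeding in $\ell(R/I_1)=\ell(R/I)-\ell(I_1/I)$, the identity $\binom{n+d-1}{d}+\binom{n+d-1}{d-1}=\binom{n+d}{d}$, and absorbing $\ell(I_1/I)=\ell(I_1/II_0)$ as the $k=0$ term of the sum, I arrive at
\[
\ell\!\left(\frac{R}{I_{n+1}}\right)=\ell\!\left(\frac{R}{I}\right)\binom{n+d}{d}-\sum_{k\ge 0}\binom{n+d-k-1}{d-1}\,\ell\!\left(\frac{I_{k+1}}{II_k}\right)\qquad (n\ge r(\f)-1).
\]

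Finally I would rearrange and compare coefficients. Substituting Lemma~\ref{binomialEquality} (which also holds, trivially, for $k=0$) for each $\binom{n+d-k-1}{d-1}$, interchanging the two finite sums, and collecting the coefficient of $\binom{n+d-j}{d-j}$, with $c_j:=\sum_{k\ge j-1}\binom{k}{j-1}\ell(I_{k+1}/II_k)$ this becomes
\[
\ell\!\left(\frac{R}{I_{n+1}}\right)=\ell\!\left(\frac{R}{I}\right)\binom{n+d}{d}+\sum_{j=1}^{d}(-1)^{j}\,c_j\binom{n+d-j}{d-j}.
\]
On the other hand $\ell(R/I_{n+1})=P_{\f}(n+1)=e_0(\f)\binom{n+d}{d}+\sum_{j=1}^{d}(-1)^{j}e_j(\f)\binom{n+d-j}{d-j}$ for $n\gg 0$. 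Since $\big\{\binom{n+d-j}{d-j}\big\}_{j=0}^{d}$ is a $\Q$-basis of the polynomials in $n$ of degree $\le d$, comparing the coefficients of two polynomials that agree for all large $n$ yields $e_0(\f)=\ell(R/I)$ (consistent with $e_0(\f)=e(I)=\ell(R/I)$) and $e_i(\f)=c_i$ for $i=1,\dots,d$, which is the asserted formula.

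Where the work is: there is no single hard step, and the care goes into the index bookkeeping of the last two displays — checking that every sum in sight is finite (via $\ell(I_{k+1}/II_k)=0$ for $k\ge r(\f)$), that the telescoping chain genuinely terminates at $I_{n+1}$ once $n\ge r(\f)-1$, and, crucially, that Lemma~\ref{iso} is being applied at \emph{every} level $k$, which is precisely the point at which one needs $HI_p$ for all $p\le r(\f)-2$ rather than merely $HI_0$.
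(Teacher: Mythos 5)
Your proof is correct and follows essentially the same approach as the paper's: telescope $\ell(I_{n+1}/I^nI_1)$ along the chain $I^nI_1 \subseteq I^{n-1}I_2 \subseteq \cdots$, convert each link with Lemma~\ref{iso}, substitute Lemma~\ref{binomialEquality}, and compare coefficients of $\binom{n+d-j}{d-j}$ against the Hilbert polynomial. The only cosmetic differences --- absorbing $\ell(I_1/I)$ as a $k=0$ term of the sum and invoking Remark~\ref{sec3:rmkOnReductionNumberImpliesHIp} to extend $HI_p$ to all $p$, rather than simply truncating the sum at $k=r(\f)-1$ as the paper does --- do not change the substance of the argument.
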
	
\begin{proof}
	Recall that for large $n$,
	\begin{align*}
	\ell\left(\frac{R}{I_{n+1}}\right)=e_0(\f)\binom{n+d}{d}-e_1(\f)\binom{n+d-1}{d-1}+\cdots+(-1)^de_d(\f).
	\end{align*}
Let $r=r(\f)$. For sufficiently large $n$, as in the proof of Theorem \ref{polyfilt}, we can write
	
	\begin{align*}
	\ell\left(\frac{R}{I_{n+1}}\right)
	&=\ell\left(\frac{R}{I}\right)\binom{n+d}{d}-\ell\left(\frac{I_1}{I}\right)\binom{n+d-1}{d-1}-\ell\left(\frac{I_{n+1}}{I^nI_1}\right)\\
	&=\ell\left(\frac{R}{I}\right)\binom{n+d}{d}-\ell\left(\frac{I_1}{I}\right)\binom{n+d-1}{d-1}-\sum_{k=1}^{r-1}\ell\left(\frac{I^{n-k}I_{k+1}}{I^{n-k+1}I_k}\right)+\ell\left(\frac{I_{n+1}}{I^{n+1-r}I_r}\right).
	\end{align*}
	Since $\f$ satisfies $HI_p$ for $p\leq r-2$, we can use Lemma \ref{iso} to conclude that $\ell\left(\dfrac{I^{n-k}I_{k+1}}{I^{n-k+1}I_k}\right)=\binom{n+d-k-1}{d-1}\ell\left(\dfrac{I_{k+1}}{II_k}\right)$, for all $1 \le k \le r-1$. Also, since $r$ is the reduction number of $\f$, we have $\ell\left(\dfrac{I_{n+1}}{I^{n+1-r}I_r}\right)=0$. Thus
	\begin{align*}
	\ell\left(\frac{R}{I_{n+1}}\right)&=\ell\left(\frac{R}{I}\right)\binom{n+d}{d}-\ell\left(\frac{I_1}{I}\right)\binom{n+d-1}{d-1}-\sum_{k=1}^{r-1}\binom{n+d-k-1}{d-1}\ell\left(\frac{I_{k+1}}{II_k}\right)\\
	&=\ell\left(\frac{R}{I}\right)\binom{n+d}{d}-\ell\left(\frac{I_1}{I}\right)\binom{n+d-1}{d-1}\\
	&\qquad\qquad\qquad\qquad\qquad -\sum_{k=1}^{r-1}\left[\sum_{j=1}^{k+1} (-1)^{j-1}\binom{k}{j-1}\binom{n+d-j}{d-j}\right] \ell\left(\frac{I_{k+1}}{II_k}\right).
	\end{align*}
where the last equality is a consequence of Lemma \ref{binomialEquality}. 
	Now equate  the coefficient of $\binom{n+d-i}{d-i}$ on both sides to get  
	\[ e_i(\f)=\sum_{k=i-1}^{\infty}\binom{k}{i-1}\ell\left(\frac{I_{k+1}}{II_k}\right). \]

\end{proof}

\begin{Remark} {\rm
		In \cite[Corollary 2.11]{Huckaba1}, Huckaba derived the above formulas when $I$ is an $\m$-primary ideal and $\f$ is the $I$-adic filtration, provided that the condition $\depth \gr_I(R)\geq d-1$ is satisfied. So it is natural to question the strength of the statement $I_{n+1}\cap I^{n-p}= I_{p+1}I^{n-p}$ for all $0\leq p\leq r(\f)-2$, when compared to $\depth \gr_{\f}(R)\geq d-1$.
}\end{Remark}

\begin{Theorem}
	If $\f$ satisfies the condition $HI_p$ for all $p$, then the associated graded ring $\gr_{\f}(R)$ is Cohen-Macaulay.
\end{Theorem}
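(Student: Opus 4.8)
The plan is to prove that the elements $x_1^{*},\dots,x_d^{*}$, the images of $x_1,\dots,x_d$ in degree one of $G:=\gr_{\f}(R)$, form a $G$-regular sequence. Since $\gr_{\f}(R)$ is a finite $\gr_I(R)$-module of dimension $d$ by Proposition~\ref{poly}, and $\gr_I(R)\cong (R/I)[X_1,\dots,X_d]$ because $x_1,\dots,x_d$ is a regular sequence, a homogeneous regular sequence of length $d$ in the maximal graded ideal of $G$ forces $\depth G=\dim G=d$, and hence $G$ is Cohen--Macaulay. (Here $R$ is automatically Cohen--Macaulay and $I$ is $\m$-primary, since $x_1,\dots,x_d$ is a regular sequence of length $d=\dim R$.)

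The crucial step is to upgrade the hypothesis to the Valabrega--Valla-type equalities
\[
(x_1,\dots,x_i)\cap I_n=(x_1,\dots,x_i)\,I_{n-1}\qquad(1\le i\le d,\ n\ge 1),
\]
which I denote by $(\star_{i,n})$. First, $HI_p$ only constrains intersections of the $I_n$ with powers of the whole ideal $I$: taking $N=n-1$ in the relation defining $HI_{n-2}$ gives $I_n\cap I=I_{n-1}I$ for all $n\ge2$, which together with the trivial case $n=1$ is exactly $(\star_{d,n})$. To get $(\star_{i,n})$ for $i<d$ I would use a double induction --- descending on $i$ from the base case $i=d$, and for each fixed $i$ ascending on $n$ from the trivial cases $n\le1$. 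For the inductive step, let $z\in(x_1,\dots,x_i)\cap I_n$; by $(\star_{i+1,n})$ write $z=\sum_{l=1}^{i+1}x_lb_l$ with all $b_l\in I_{n-1}$. Then $x_{i+1}b_{i+1}=z-\sum_{l=1}^{i}x_lb_l\in(x_1,\dots,x_i)$, so $b_{i+1}\in(x_1,\dots,x_i)$ because $x_1,\dots,x_{i+1}$ is a regular sequence; hence $b_{i+1}\in(x_1,\dots,x_i)\cap I_{n-1}=(x_1,\dots,x_i)I_{n-2}$ by $(\star_{i,n-1})$, say $b_{i+1}=\sum_{l=1}^{i}x_le_l$ with $e_l\in I_{n-2}$. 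Then $z=\sum_{l=1}^{i}x_l\bigl(b_l+x_{i+1}e_l\bigr)$ with $b_l+x_{i+1}e_l\in I_{n-1}$, since $x_{i+1}I_{n-2}\subseteq II_{n-2}\subseteq I_{n-1}$; this establishes $(\star_{i,n})$.

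Granting $(\star_{i,n})$ for all $i$ and $n$, I would conclude by induction on $d$ (essentially the Valabrega--Valla criterion in its version for filtrations). By $(\star_{1,n})$ we have $(x_1)\cap I_n=x_1I_{n-1}$ for all $n$, and as $x_1$ is a nonzerodivisor this shows $x_1^{*}$ is a nonzerodivisor on $G$; the same equalities identify $G/x_1^{*}G$ with $\gr_{\ov{\f}}(\ov{R})$, where $\ov{R}=R/(x_1)$, $\ov{I}=I/(x_1)$ is generated by the regular sequence $\ov{x}_2,\dots,\ov{x}_d$, and $\ov{\f}=\{(I_n+(x_1))/(x_1)\}$. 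A short computation with $(\star_{2,\cdot}),\dots,(\star_{d,\cdot})$ shows that $\ov{\f}$ again satisfies the analogous equalities, so by induction $\ov{x}_2^{*},\dots,\ov{x}_d^{*}$ is a regular sequence on $G/x_1^{*}G$, whence $x_1^{*},\dots,x_d^{*}$ is a regular sequence on $G$ and $G$ is Cohen--Macaulay. The main difficulty is the second paragraph: the hypothesis delivers only the top equality $I\cap I_n=II_{n-1}$, whereas Cohen--Macaulayness of $\gr_{\f}(R)$ genuinely needs the nested intersections with the partial ideals $(x_1,\dots,x_i)$, and prying these loose is what requires the Koszul-type rewriting and the interleaved induction; the remaining reductions modulo $x_1$ are routine.
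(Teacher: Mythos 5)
Your proof is correct, but it takes a considerably longer route than the paper needs. Both arguments begin the same way: specializing $HI_p$ to $p=n-1$ (equivalently your $HI_{n-2}$ with $N=n-1$) yields the single family of equalities $I_{n+1}\cap I=I_n I$. At that point the paper simply cites the Valabrega--Valla criterion in its filtration form (this is Itoh's Lemma~9 in \cite{itoh88}, or \cite[Corollary~2.7]{VV}), which says precisely that for a filtration $\f$ of an ideal $I=(x_1,\dots,x_d)$ generated by a regular sequence, the initial forms $x_1^*,\dots,x_d^*$ constitute a $\gr_\f(R)$-regular sequence if and only if $I\cap I_n=II_{n-1}$ holds for all $n$. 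Your second paragraph --- the double induction that upgrades the top equality to the nested equalities $(x_1,\dots,x_i)\cap I_n=(x_1,\dots,x_i)I_{n-1}$, followed by the induction on $d$ through $G/x_1^*G\cong\gr_{\ov\f}(\ov R)$ --- is exactly a proof of that criterion from scratch. The Koszul-type rewriting you do (using regularity of $x_1,\dots,x_{i+1}$ to force $b_{i+1}\in(x_1,\dots,x_i)$ and then invoking the smaller case) is the standard mechanism inside the Valabrega--Valla proof; it works, and the $n\le 1$ base cases and the passage to $\ov\f$ are handled correctly. But your closing remark that ``the hypothesis delivers only the top equality, whereas Cohen--Macaulayness genuinely needs the nested intersections'' slightly misrepresents the situation: the whole point of the Valabrega--Valla theorem is that the top equality alone is equivalent to the conclusion, the nested intersections being an internal consequence rather than an extra hypothesis one must supply. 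In short: same entry point, but where the paper's proof is a one-line reduction to a cited classical result, yours inlines the proof of that classical result.
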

\begin{proof}
	We have $I_{n+1}\cap I^{n-p}= I_{p+1}I^{n-p}$ for all $p \le n$. Put $p=n-1$ to get for all $n\geq 0$,
	\[ I_{n+1}\cap I=I_n I. \] 
	Hence, by Valabrega-Valla (\cite[Corollary 2.7]{VV} or \cite[Lemma 9]{itoh88}), we get that $\gr_{\f}(R)$ is Cohen-Macaulay.  
\end{proof}
It follows that the condition that a filtration $\f$ satisfies the condition $HI_p$, for all $p$, is a stronger condition than $\depth \gr_{\f}(R)\geq d-1$. Thus, the generalized intersection theorems only help us to give a different proof of Huckaba's result.

\section{The Tight Hilbert Polynomial}

Let $(R, \m)$ be a $d$-dimensional analytically unramified local ring with positive prime characteristic $p$ and let $I$ be an $\m$-primary ideal. We prove that for large $n$, $H_I^*(n) = \ell(R/(I^n)^*)$ is a polynomial, $P_I^*(n)$, of degree $d$ and with coefficients in $\Q$. We write 
\[ P_I^*(x) = e_0^*(I){x+d-1 \choose d} - e_1^*(I){x+d-2 \choose d-1}+\cdots + (-1)^d e_d^*(I) \]
where $e_i^*(I)\in \Z$.

\begin{Theorem}
	Let $R$ be a d-dimensional analytically unramified local ring with positive prime characteristic $p$ and $I$ be an $\m$-primary ideal. Let $\g=\{(I^n)^*\}_{n\in \Z}$. Then\\
	{\rm(1)} $\g$ is an I-filtration.\\
	{\rm(2)} $R(\g) = \oplus_{n\in \Z} (I^n)^*t^n$ is a finite $R(I)$-module. \\
	{\rm(3)} $H_I^*(n) = \ell(R/(I^n)^*)$ is a polynomial function of degree $d$, given by $P_I^*(x) \in \Q[x]$. Moreover, $P_I^*(x)$ and $P_I(x)$ have the same leading coefficient, $e_0(I)$.	
\end{Theorem}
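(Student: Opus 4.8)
The plan is to check that $\g=\{(I^n)^*\}$ is an $I$-admissible filtration and then to quote Proposition~\ref{poly}. For (1), I would verify the three filtration axioms using standard properties of tight closure. Monotonicity of tight closure applied to $I^{n+1}\subseteq I^n$ gives $(I^{n+1})^*\subseteq (I^n)^*$; the inclusion $I^n\subseteq (I^n)^*$ is immediate (take $c=1$ in the definition of $(I^n)^*$); and for the product axiom, given $x\in (I^m)^*$ and $y\in(I^n)^*$, pick $c,c'\in R^\circ$ with $cx^q\in (I^m)^{[q]}$ and $c'y^q\in (I^n)^{[q]}$ for all large $q$, so that $cc'(xy)^q\in (I^m)^{[q]}(I^n)^{[q]}=(I^{m+n})^{[q]}$ for all large $q$ with $cc'\in R^\circ$ (as $R^\circ$ is multiplicatively closed); hence $(I^m)^*(I^n)^*\subseteq (I^{m+n})^*$. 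Here one uses the conventions $(I^a)^{[q]}=(I^{[q]})^a$ and $I^n=R=(I^n)^*$ for $n\le0$, the cases with a non-positive index reducing to the decreasing property just established. For admissibility, I would combine $(I^n)^*\subseteq\ov{I^n}$ (tight closure lies inside integral closure) with Theorem~\ref{reesau}: since $R$ is analytically unramified there is $k\in\N$ with $\ov{I^{m+k}}\subseteq I^m$ for all $m\ge0$, whence $(I^n)^*\subseteq\ov{I^n}\subseteq I^{n-k}$ for all $n$ (the inclusion being trivial when $n\le k$). This shows $\g$ is $I$-admissible.

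For (2), admissibility yields $(I^n)^*t^n\subseteq I^{n-k}t^n$ for every $n$, i.e.\ $R(\g)\subseteq t^kR(I)$ inside $R[t,t^{-1}]$, while $I^n\subseteq (I^n)^*$ gives $R(I)\subseteq R(\g)$, so $R(\g)$ is an $R(I)$-submodule of $t^kR(I)$. Since $R$ is Noetherian, $R(I)$ is a Noetherian ring and the free rank-one $R(I)$-module $t^kR(I)$ is Noetherian, so its submodule $R(\g)$ is a finite $R(I)$-module; this is exactly the general statement, recalled in Section~2, that an $I$-admissible filtration has module-finite Rees algebra.

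Part (3) then follows by applying Proposition~\ref{poly} with $\f=\g$: parts (1)--(2) of that proposition give that $\gr_\g(R)$ is a finite $\gr_I(R)$-module with $\dim\gr_\g(R)=d$, and part (3) gives that $H_I^*(n)=\ell(R/(I^n)^*)=H_\g(n)$ agrees for large $n$ with a polynomial $P_I^*(x)\in\Q[x]$ having the same leading coefficient as $P_I(x)$, namely $e_0(I)=e(I)$; in particular $\deg P_I^*(x)=d$. Writing $P_I^*(x)$ in the binomial basis $\binom{x+d-i}{d-i}$ and using that $H_I^*(n)\in\Z$ for all $n$ gives that the coefficients $e_i^*(I)$ are integers.

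There is no real difficulty: the only place the analytically unramified hypothesis enters is the admissibility step of (1), through $(I^n)^*\subseteq\ov{I^n}$ and Theorem~\ref{reesau}, and the finiteness in (2) rests only on Noetherianity of $R(I)$. The two small points to watch are that $R^\circ$ is multiplicatively closed (for the product axiom) and that the conventions for non-positive indices are respected throughout.
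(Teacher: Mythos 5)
Your argument is correct, and it reaches the same endpoint (apply Proposition~\ref{poly} to an $I$-admissible filtration) by a slightly different route, mainly in how admissibility and module-finiteness of $R(\g)$ are obtained. The paper proves module-finiteness first: it places $R(I)\hookrightarrow R(\g)\hookrightarrow R(\f)=\bigoplus\ov{I^n}t^n$, invokes Rees' theorem that $R(\f)$ is a finite $R(I)$-module over the Noetherian ring $R(I)$, and then reads off both finiteness of $R(\g)$ (submodule of a Noetherian module) and, as a consequence, the existence of $k$ with $(I^n)^*\subseteq I^{n-k}$. You reverse the implication: you establish admissibility directly from $(I^n)^*\subseteq\ov{I^n}\subseteq I^{n-k}$ using Theorem~\ref{reesau}(2), and then deduce finiteness of $R(\g)$ from $R(\g)\subseteq t^kR(I)$. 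These are essentially equivalent restatements of Rees' result; your route is a touch more elementary since it avoids quoting finiteness of the normal Rees algebra, while the paper's route makes the sandwich $R(I)\subseteq R(\g)\subseteq R(\f)$ visible, which it reuses elsewhere. One other small difference: for the product axiom $(I^m)^*(I^n)^*\subseteq(I^{m+n})^*$ you argue directly from the definition with a product of test multipliers $cc'\in R^\circ$, whereas the paper quotes the standard containment $I^*J^*\subseteq(I^*J^*)^*=(IJ)^*$; both are fine, and your note that $R^\circ$ is multiplicatively closed is the right point to flag. Part (3) is handled the same way in both.
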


\begin{proof}
	Clearly, $(I^{n+1})^* \subseteq (I^n)^*$ and $I^n \subseteq (I^n)^*$, for all $n$. 
	Observe that for any pair of ideals $I,J$ of $R$, we have $I^*J^* \subseteq (I^*J^*)^*=(IJ)^*.$ Thus, $(I^m)^* \cdot (I^n)^* \subseteq (I^{m+n})^*$, for all $m,n \in \Z.$ 
	Therefore, $\g$ is an $I$-filtration, proving (1). 
	
	Let $R(\f) = \oplus_{n\in\Z}\ov{I^n}t^n$ and $R(\g) = \oplus_{n\in\Z}(I^n)^*t^n$. We have 
	\[ R(I)\hookrightarrow R(\g) \hookrightarrow R(\f), \]
	which is an inclusion of graded rings. As $R(\f)$ is Noetherian and is a finite $R(I)$-module and as $R(\g)$ is a Noetherian submodule of $R(\f)$, we get $R(\g)$ is a finite $R(I)$-module. Thus, there exists $k\in\N$ such that $(I^n)^* \subseteq I^{n-k}$, for all $n$. Hence, $\g$ is an $I$-admissible filtration. This proves (2).
	
	Using Proposition \ref{poly}, we can conclude that $H^*_I(n)=\ell(R/(I^n)^*)$ is a polynomial, $P^*_I(x) \in \Q[x]$. We write 
	\[ P_I^*(n) = e_0^*(I){n+d-1 \choose d} - e_1^*(I){n+d-2 \choose d-1}+\cdots + (-1)^d e_d^*(I) \]
	where $e_i^*(I)\in \Z$. Moreover, $P^*_I(n)$ and $P_I(n)$ have the same leading coefficient.
\end{proof}

We now prove that in characteristic $p$, the Huneke-Itoh intersection theorem is indeed true for the  tight closure filtration of an ideal generated by a regular sequence. The following proposition was assigned as an exercise in a course offered by Huneke at Purdue University in Spring 1987. We supply a proof due to lack of a suitable reference.

For $\alpha=(\alpha_1,\alpha_2,\ldots,\alpha_d) \in \N^d$, let $|\alpha|=\alpha_1+\alpha_2+\cdots+\alpha_d$ and $x^\alpha=x_1^{\alpha_1}x_2^{\alpha_2}\cdots x_d^{\alpha_d}.$

\begin{Proposition}\label{intersectTight}
	Let $R$ be a Noetherian ring and I be an ideal generated by a regular sequence. Then for all $n \ge 0$, 
	\[ I^n \cap {(I^{n+1})^*}=I^n {I^*}. \]
\end{Proposition}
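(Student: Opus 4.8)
The plan is to mimic the classical proof of the Huneke--Itoh intersection theorem (Theorem~\ref{itohthm}), replacing integral closure by tight closure throughout, and checking that the only structural ingredient actually used --- namely that one may reduce modulo a regular element and that tight closure is controlled by a single ``test-type'' element --- survives. The containment $I^nI^* \subseteq I^n \cap (I^{n+1})^*$ is immediate since $I^nI^* \subseteq (I^{n+1})^*$ by the multiplicativity $(I^a)^*(I^b)^* \subseteq (I^{a+b})^*$ established in Section~4, and $I^nI^* \subseteq I^n$. So the content is the reverse inclusion $I^n \cap (I^{n+1})^* \subseteq I^nI^*$.

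First I would fix $u \in I^n \cap (I^{n+1})^*$ and choose $c \in R^{\circ}$ with $cu^q \in (I^{n+1})^{[q]} = (I^{[q]})^{n+1}$ for all large $q=p^e$. Since $u \in I^n$, write $u = \sum_{|\alpha|=n} r_\alpha x^\alpha$ with $I=(x_1,\dots,x_d)$ a regular sequence; then $u^q = \sum_{|\beta|=nq} s_\beta x^\beta$ for suitable $s_\beta$ (the multinomial expansion), and $cu^q$ lies in $(I^{[q]})^{n+1}$, an ideal minimally generated by the monomials $x^\gamma$ with each $\gamma_i$ a multiple of $q$ and $|\gamma|=(n+1)q$. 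The key point is a monomial/colon computation: because $x_1^q,\dots,x_d^q$ is again a regular sequence, the module $I^{nq}/(I^{[q]})^{n+1}\cdot(\text{something})$ --- more precisely the relevant colon ideal --- can be computed combinatorially. The cleanest route is to show directly that $cu^q \in (I^{[q]})^{n+1}$ forces $c u^q \in I^{nq}\cdot I^{[q]} \cdot (\text{a fixed ideal independent of }q)$; equivalently, that there is a fixed $c' \in R^{\circ}$ (one may take $c'=c$, or $c$ times a fixed element) with $c' u^q \in (I^n)^{[q]} I^{[q]} = I^{(n+1)q}$ up to the tight-closure slack, so that $c u^q \in I^{nq}I^{[q]}$ after absorbing. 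I would then conclude $(cu)^q \in c^{q-1}I^{nq}I^{[q]} \subseteq I^{nq}I^{[q]}$... no: better, argue that $u^q \in (I^{nq}I^{[q]}) :_R c$ and that, since $u \in I^n$, writing $u = \sum r_\alpha x^\alpha$ one gets each ``coefficient cluster'' landing in $I^{[q]}$ modulo the regular sequence, hence $c \cdot r_\alpha^q \in I^{[q]}$ for all $\alpha$ with $|\alpha|=n$ appearing, i.e.\ $r_\alpha \in I^*$, giving $u \in I^nI^*$.

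The genuine obstacle is the middle step: passing from ``$cu^q$ lies in the $(n+1)$-st Frobenius-power ideal'' to ``each coefficient $r_\alpha$ is tightly in $I$,'' because the decomposition $u=\sum r_\alpha x^\alpha$ is not unique and the $r_\alpha$ are only defined modulo the syzygies of the $x^\alpha$. The regular-sequence hypothesis is exactly what makes these syzygies Koszul, so I would phase it as follows: use that $x_1^q,\dots,x_d^q$ is a regular sequence to get $(I^{[q]})^{n+1} = \bigoplus (x^\gamma)R$ modulo a Koszul-type relations module that itself lies in $I^{[q]}$ times lower pieces, then do downward induction on $d$ by reducing modulo $x_d$ (which stays a nonzerodivisor, and passes $R^{\circ}$ to $R/(x_d)^{\circ}$ after the standard prime-avoidance argument since the sequence is regular), with the base case $d=1$ being the trivial identity $I^n \cap (I^{n+1})^* \subseteq (x^n) \cap (x^{n+1})^* = x^n((x)^* : \text{stuff})$ handled directly because $x$ is a nonzerodivisor. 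I expect the induction on the number of generators, together with careful bookkeeping of which element of $R^{\circ}$ one carries (one should verify a \emph{single} $c$ works, shrinking it finitely many times over the induction), to be the technical heart; everything else is multinomial expansion and colon-ideal manipulation with a regular sequence. Throughout I would avoid invoking $F$-finiteness or test elements, keeping the argument valid for an arbitrary Noetherian ring as stated.
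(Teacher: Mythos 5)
The core of your argument (fix a representation $u=\sum_{|\alpha|=n} r_\alpha x^\alpha$, deduce $cr_\alpha^q\in I^{[q]}$, conclude $r_\alpha\in I^*$) is exactly what the paper does, but the middle step is where you lose the thread, and the detours you propose are not needed and would create new problems. The key simplification you never state cleanly is the Frobenius (``freshman's dream'') identity: in characteristic $p$, $u^q=\bigl(\sum r_\alpha x^\alpha\bigr)^q=\sum_{|\alpha|=n} r_\alpha^q\,(x^\alpha)^q$ with no cross terms, because all nontrivial multinomial coefficients $\binom{q}{k_1,\dots,k_s}$ vanish mod $p$. Instead you write the general expansion $u^q=\sum_{|\beta|=nq}s_\beta x^\beta$, which discards the one structural feature that makes the proof one line: once $u^q$ is a sum of the monomials $x^{q\alpha}$ with coefficients $r_\alpha^q$, the containment $cu^q\in(I^{[q]})^{n+1}$ together with the fact that $x_1^q,\dots,x_d^q$ is again a regular sequence (so the $x^{q\alpha}$, $|\alpha|=n$, form a free basis of $(I^{[q]})^n/(I^{[q]})^{n+1}$ over $R/I^{[q]}$) immediately forces $cr_\alpha^q\in I^{[q]}$ for every $\alpha$, hence $r_\alpha\in I^*$. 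No colon manipulations, no ``fixed ideal independent of $q$.'' Note also the algebraic slip $(I^n)^{[q]}I^{[q]}=I^{(n+1)q}$: the correct identity is $(I^n)^{[q]}I^{[q]}=(I^{[q]})^{n}I^{[q]}=(I^{[q]})^{n+1}$, which is a strictly smaller ideal than $I^{(n+1)q}$.

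Two further points. First, the non-uniqueness of the decomposition $u=\sum r_\alpha x^\alpha$, which you flag as ``the genuine obstacle,'' is a red herring: one fixes \emph{one} such representation and proves that its coefficients land in $I^*$; nothing in the statement requires uniqueness. Second, the proposed downward induction on $d$ by passing to $R/(x_d)$ is both unnecessary and delicate: the image of an element of $R^{\circ}$ in $R/(x_d)$ need not lie in $(R/(x_d))^{\circ}$, and (outside the local case) the last entry of a regular sequence need not be a nonzerodivisor on $R$ itself, so the ``standard prime-avoidance'' step is not automatic. The direct argument avoids all of this. So the proposal points at the right answer but, as written, does not constitute a proof; the missing idea is precisely the explicit use of Frobenius to kill the cross terms in $u^q$.
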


\begin{proof} Let $I$ be generated by a regular sequence $x_1, x_2,\ldots, x_d.$ The inclusion $I^n {I^*} \subset I^n \cap {(I^{n+1})^*}$ is clear since $I^*J^* \subset (I^*J^*)^* = (IJ)^*$, for any ideals $I,J$ of $R.$ For the reverse inclusion, let $ a \in I^n \cap {(I^{n+1})^*}.$ Then there exists a $c \in R^o$ such that $c a^q \in (I^{n+1})^{[q]}$, for all large $q.$ 
	
	We write $a=\sum_{|\alpha|=n} y_\alpha x^\alpha$, where $y_\alpha \in R.$ Then 
	\[ ca^q= \sum_{|\alpha|=n} cy_\alpha^q x_1^{q\alpha_1}\cdots x_d^{q\alpha_d} = \sum_{\beta\in \N^{d}, \; |\beta|=n+1} z_\beta x_1^{q\beta_1}\cdots x_d^{q\beta_d} \]
	for some $z_\beta \in R.$ Using the fact that $x_1^q, \ldots,x_d^q$ is a regular sequence, we conclude that $cy^q_\alpha \in (I)^{[q]}$, for all large $q.$ This implies that $y_\alpha \in I^*$ and hence $a \in I^nI^*.$	
\end{proof}

\begin{Corollary}[Watanabe]\label{Watanabe}
	Let $R$ be a Noetherian  ring  having  positive prime characteristic $p$ and $I$ be an ideal generated by a regular sequence. If $I$ is tightly closed, then so is $I^n$, for all $n\ge 1$. 
\end{Corollary}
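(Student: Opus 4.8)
The plan is to deduce this directly from Proposition \ref{intersectTight}. Suppose $I$ is generated by a regular sequence $x_1,\dots,x_d$ and assume $I = I^*$; I want to show $(I^n)^* = I^n$ for all $n \ge 1$, arguing by induction on $n$. The base case $n=1$ is the hypothesis. For the inductive step, suppose $(I^{n})^* = I^{n}$; I would like to conclude $(I^{n+1})^* = I^{n+1}$.

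First I would record the elementary containment $(I^{n+1})^* \subseteq (I^n)^*$ (tight closure is order-preserving on the powers of $I$, and this inclusion was already noted in the excerpt), so by the inductive hypothesis $(I^{n+1})^* \subseteq (I^n)^* = I^n$. Hence
\begin{align*}
(I^{n+1})^* = (I^{n+1})^* \cap I^n.
\end{align*}
Now apply Proposition \ref{intersectTight} with the exponent $n$: it gives $I^n \cap (I^{n+1})^* = I^n I^*$. Since $I^* = I$ by hypothesis, the right-hand side is $I^n I = I^{n+1}$. Combining, $(I^{n+1})^* = I^{n+1}$, which completes the induction.

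The argument is essentially immediate once Proposition \ref{intersectTight} is in hand, so there is no real obstacle; the only point requiring a moment's care is the order of quantifiers in the induction — one must first use the (already proved) containment $(I^{n+1})^* \subseteq (I^n)^*$ to land inside $I^n$, and only then invoke the intersection theorem, rather than trying to apply it blindly. One should also note that this uses the Noetherian hypothesis on $R$ only through Proposition \ref{intersectTight} (which is stated for Noetherian $R$), and that the regular sequence generating $I$ also generates $I^{n+1}$ up to the standard fact that powers of an ideal generated by a regular sequence behave well — but in fact we do not even need that here, since we only ever apply Proposition \ref{intersectTight} to $I$ itself.
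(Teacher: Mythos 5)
Your proof is correct and is essentially the same as the paper's: induction on $n$, use $(I^{n+1})^* \subseteq (I^n)^* = I^n$ from the inductive hypothesis, and then apply Proposition \ref{intersectTight} to identify $(I^{n+1})^* = (I^{n+1})^* \cap I^n = I^n I^* = I^{n+1}$. The only difference is cosmetic indexing (the paper runs the induction from $n-1$ to $n$ rather than $n$ to $n+1$).
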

\begin{proof}
	We prove the result by induction on $n$. Clearly, the result holds for $n=1$. Suppose that $I, I^2,\dots, I^{n-1}$ are tightly closed. Let $x\in (I^n)^*.$ Since $ (I^n)^*\subset (I^{n-1})^*=I^{n-1},$ using the above proposition it follows that
	$x\in (I^n)^*\cap I^{n-1}=I^*I^{n-1}=I^n.$ Hence $I^n$ is also tightly closed.  
\end{proof}

We characterize $F$-rationality of a ring $R$ with positive prime characteristic $p$ in terms of vanishing of $e_1^*(I)$, under suitable conditions. Let $q=p^e,$ for some $e>0$.

\begin{Theorem}
	Let $(R,\m)$ be a $d$-dimensional analytically unramified Cohen-Macaulay local ring with positive prime characteristic $p$. Then $R$ is $F$-rational if and only if $e_1^*(I)=0,$ for some  ideal $I$ generated by a system of parameters.
\end{Theorem}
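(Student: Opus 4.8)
The plan is to prove the two implications separately: the ``only if'' direction follows quickly from Corollary~\ref{Watanabe}, while the ``if'' direction splits into showing first that $e_1^*(I)=0$ forces the parameter ideal $I$ itself to be tightly closed, and then invoking the classical fact that over a Cohen--Macaulay local ring of characteristic $p$ a single tightly closed parameter ideal makes the ring $F$-rational.

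For the forward direction, assume $R$ is $F$-rational and let $I=(x_1,\dots,x_d)$ be any parameter ideal. Being an ideal of principal class, $I$ is tightly closed, so Corollary~\ref{Watanabe} gives $(I^n)^*=I^n$ for all $n\ge 1$. Since $I$ is generated by a regular sequence and $R$ is Cohen--Macaulay, $\ell(R/I^n)=\ell(R/I)\binom{n+d-1}{d}$ for every $n\ge 1$, hence $H_I^*(n)=\ell(R/I)\binom{n+d-1}{d}$ and $P_I^*(x)=\ell(R/I)\binom{x+d-1}{d}$, so $e_i^*(I)=0$ for all $i\ge 1$; in particular $e_1^*(I)=0$.

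For the converse, suppose $e_1^*(I)=0$ for a parameter ideal $I=(x_1,\dots,x_d)$, so $x_1,\dots,x_d$ is an $R$-regular sequence. I would reuse the opening computation in the proof of Theorem~\ref{polyfilt} for the admissible filtration $\g=\{(I^n)^*\}$ --- that part uses only that $R$ is Cohen--Macaulay and $I^n/I^{n+1}$ is $R/I$-free, not the condition $HI_0$ --- to get, for all $n\ge 1$,
\begin{align*}
\ell\!\left(\frac{R}{(I^{n+1})^*}\right)=\ell\!\left(\frac{R}{I}\right)\binom{n+d}{d}-\ell\!\left(\frac{I^*}{I}\right)\binom{n+d-1}{d-1}-\ell\!\left(\frac{(I^{n+1})^*}{I^nI^*}\right),
\end{align*}
with $I^nI^*\subseteq(I^{n+1})^*$ because $(I^n)^*(I)^*\subseteq(I^{n+1})^*$. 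For $n\gg 0$ the left side is $P_I^*(n+1)$ and $e_0^*(I)=e(I)=\ell(R/I)$; comparing the two polynomials in $n$ and using $e_1^*(I)=0$ yields, for $n\gg 0$,
\begin{align*}
\ell\!\left(\frac{(I^{n+1})^*}{I^nI^*}\right)=-\ell\!\left(\frac{I^*}{I}\right)\binom{n+d-1}{d-1}+(\text{polynomial in }n\text{ of degree}\le d-2).
\end{align*}
The left side is a nonnegative integer for every $n$, while on the right the coefficient of $\binom{n+d-1}{d-1}$ is $-\ell(I^*/I)$ and the remaining terms have lower degree in $n$; if $\ell(I^*/I)$ were positive the right side would be negative for $n\gg 0$, so $\ell(I^*/I)=0$, i.e.\ $I^*=I$.

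It remains to pass from ``$I$ is a tightly closed parameter ideal'' to ``$R$ is $F$-rational,'' and this is the main obstacle --- the earlier steps are just bookkeeping with the Hilbert-coefficient identity above. One route is to cite the known equivalence, for Cohen--Macaulay local rings of characteristic $p$, between $F$-rationality and tight-closedness of a single parameter ideal; the self-contained argument goes through $\HH^d_{\m}(R)$: Cohen--Macaulayness makes the transition maps $R/(x_1^t,\dots,x_d^t)\xrightarrow{\,x_1\cdots x_d\,}R/(x_1^{t+1},\dots,x_d^{t+1})$ injective, so $R/J$ embeds in $\HH^d_{\m}(R)$ for every parameter ideal $J$ and $\Soc\HH^d_{\m}(R)$ is the image of $\Soc(R/I)=(I:\m)/I$; if $0^*_{\HH^d_{\m}(R)}\neq 0$ it would meet the socle, producing $v\in(I:\m)\setminus I$ with $v\in I^*=I$, a contradiction, so $0^*_{\HH^d_{\m}(R)}=0$ and hence $J^*=J$ for every parameter ideal $J$. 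Note that the Cohen--Macaulay hypothesis is used here (and for the identity above), while analytical unramifiedness is what makes $\g=\{(I^n)^*\}$ admissible and $P_I^*$ well defined in the first place.
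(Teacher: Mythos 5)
Your forward direction is essentially the same as the paper's: $F$-rationality plus Corollary~\ref{Watanabe} makes $(I^n)^*=I^n$ for all $n$, Cohen--Macaulayness gives $\ell(R/I^n)=\ell(R/I)\binom{n+d-1}{d}$, hence all higher coefficients vanish. For the converse the paper takes a shortcut: it cites the Huckaba--Marley bound $e_1^*(I)\geq e_0^*(I)-\ell(R/I^*)$ (from their Corollary~4.9 for admissible filtrations), which with $e_1^*(I)=0$ and $e_0^*(I)=e_0(I)=\ell(R/I)$ forces $\ell(R/I)\leq\ell(R/I^*)$, hence $I=I^*$. You instead derive the same inequality from scratch by reusing the Hilbert-function decomposition at the start of the proof of Theorem~\ref{polyfilt} for the filtration $\{(I^n)^*\}$ (which, as you correctly observe, does not need $HI_0$ --- only Cohen--Macaulayness and freeness of $I^n/I^{n+1}$ over $R/I$), and then deducing $\ell(I^*/I)=0$ from nonnegativity of $\ell((I^{n+1})^*/I^nI^*)$ versus the sign of the degree-$(d-1)$ term. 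Your route is more self-contained and effectively reproves the special case of Huckaba--Marley you need, at the cost of a longer argument; the paper's cite is cleaner but leaves the inequality as a black box. Both proofs rely on the known fact that a Cohen--Macaulay local ring of characteristic $p$ with one tightly closed parameter ideal is $F$-rational --- the paper asserts this in one line, while you sketch the standard local-cohomology argument via $0^*_{\HH^d_\m(R)}=0$, which is a fair thing to flag since the paper never proves it. Your proof is correct.
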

\begin{proof}
	Let $R$ be $F$-rational and $I$ be generated by a system of parameters. Then $I$ is tightly closed and so are all the powers of $I$ by Proposition \ref{Watanabe}. Thus, \[\ell(R/(I^n)^*)=\ell(R/I^n)=e_0(I) {n+d-1\choose d}. \] 
	Hence, $e_1^*(I)=0$. Conversely, let $I$ be an ideal generated by a system of parameters  of $R$ such that $e_1^*(I)=0$. We need to show that $R$ is $F$-rational. Using \cite[Corollary 4.9]{huckabaMarley}, we know that $e_1^*(I)\ge e_0^*(I)-\ell(R/I^*)$. This implies that $e_0(I) = \ell(R/I)\le \ell(R/I^*)$. But as $I\subseteq I^*$, we get $I=I^*$. Since $R$ is Cohen-Macaulay, we conclude that $R$ is $F$-rational.
\end{proof}

The following proposition can now be proved as a consequence of Theorem \ref{polyfilt} and Proposition \ref{intersectTight} in characteristic $p>0$.

\begin{Proposition} \label{itohTC}
	Let $(R,\m)$ be  a $d$-dimensional Cohen-Macaulay analytically unramified local ring with positive prime characteristic $p$. Let $I$ be an ideal generated by a system of parameters in $R$. Then $(I^{n+1})^* = (I^2)^*I^{n-1}$ for all $n \geq 1$ if and only if for all $n \ge 0$,
	\[ \ell\left(\frac{R}{(I^{n+1})^*}\right)=e_0^*(I)\binom{n+d}{d}- e_1^*(I)\binom{n+d-1}{d-1} + e_2^*(I)\binom{n+d-2}{d-2} \]
	where 
	\[ e_0^*(I)=e_0(I)=\ell\left(\frac{R}{I}\right), \, e_1^*(I)= \left[ \ell\left(\frac{I^*}{I}\right) + \ell\left(\frac{(I^2)^*}{I I^*} \right) \right], \, e_2^*(I)=\ell\left(\frac{(I^2)^*}{II^*}\right) \]
	and $e_i^*(I)=0$, for all $3\le i \le d$.
\end{Proposition}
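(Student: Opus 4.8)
The plan is to recognize Proposition~\ref{itohTC} as the specialization of Theorem~\ref{polyfilt} to the tight closure filtration $\g=\{(I^n)^*\}$, the hypothesis $HI_0$ being provided by Proposition~\ref{intersectTight}. Since $R$ is Cohen--Macaulay, the system of parameters $x_1,\dots,x_d$ generating $I$ is an $R$-regular sequence, and since $R$ is analytically unramified, the theorem opening Section~4 shows that $\g$ is an $I$-admissible filtration (in fact $\R(\g)$ is a finite $\R(I)$-module). Thus $\g$ meets all the standing hypotheses of Theorem~\ref{polyfilt}, pending only the verification of $HI_0$.

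Next I would check $HI_0$ for $\g$. In the notation of Section~3, writing $I_n=(I^n)^*$, the condition $HI_0$ demands $I_{n+1}\cap I^n = I_1 I^n$ for all $n\ge 0$, i.e.
\[
(I^{n+1})^*\cap I^n = I^*\, I^n \qquad\text{for all } n\ge 0,
\]
which is precisely Proposition~\ref{intersectTight}. Having verified this, I would apply Theorem~\ref{polyfilt} with $\f=\g$: taking $I_1=I^*$ and $I_2=(I^2)^*$, the reduction-type identity $I_{n+1}=I_2 I^{n-1}$ (for $n\ge 1$) becomes $(I^{n+1})^*=(I^2)^*I^{n-1}$ (for $n\ge 1$), and Theorem~\ref{polyfilt} asserts that this is equivalent to the equality, valid for all $n\ge 0$,
\[
\ell\left(\frac{R}{(I^{n+1})^*}\right)=e_0(\g)\binom{n+d}{d}-e_1(\g)\binom{n+d-1}{d-1}+e_2(\g)\binom{n+d-2}{d-2},
\]
with $e_0(\g)=\ell(R/I)$, $e_1(\g)=\ell(I^*/I)+\ell\big((I^2)^*/II^*\big)$, $e_2(\g)=\ell\big((I^2)^*/II^*\big)$, and $e_i(\g)=0$ for $3\le i\le d$.

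It then remains only to reconcile the notation. The Hilbert coefficients $e_i^*(I)$ are by definition those of the filtration $\g$, so $e_i^*(I)=e_i(\g)$; moreover, by the theorem opening Section~4, $P_I^*$ and $P_I$ have equal leading coefficients, whence $e_0^*(I)=e_0(I)$, and since $I$ is a parameter ideal in the Cohen--Macaulay ring $R$ we have $e_0(I)=\ell(R/I)$, matching $e_0(\g)$. Substituting these identifications into the formulas above produces the asserted expressions for $e_1^*(I)$ and $e_2^*(I)$ and the vanishing $e_i^*(I)=0$ for $3\le i\le d$. I do not expect any substantive obstacle here: the proposition is an immediate corollary once $\g$ is plugged into Theorem~\ref{polyfilt}, with all the actual work residing in Theorem~\ref{polyfilt} and in Proposition~\ref{intersectTight}, both already in hand. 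The only points requiring a moment's attention are that $HI_0$ for $\g$ is exactly the intersection identity of Proposition~\ref{intersectTight} (the case $p=0$, where $I^{n-p}=I^n$), and the harmless identification of the two names $e_0(I)$ and $\ell(R/I)$ for the multiplicity of $I$.
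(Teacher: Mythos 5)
Your proposal is correct and follows exactly the paper's intended route: the paper proves this proposition in a single sentence by observing that it is a consequence of Theorem~\ref{polyfilt} applied to the tight closure filtration, with the $HI_0$ hypothesis supplied by Proposition~\ref{intersectTight}. Your write-up fills in precisely those verification steps (regular sequence from Cohen--Macaulayness, admissibility from analytic unramifiedness, $HI_0$ from the intersection theorem) that the paper leaves implicit.
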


\section{The tight Hilbert polynomial in diagonal hypersurface rings}

In this section, we calculate the tight Hilbert polynomial of a parameter ideal in diagonal hypersurface  rings. Let $R=\F[[X,Y,Z]]/(X^N+Y^N+Z^N)$, where $N \ge 2$, $\F$ is an infinite field of characteristic $p$ and $p \nmid N$. Let $x,y,z$ denote the images of $X,Y,Z$ respectively in $R$. Let $I=(y,z)$ and $\m=(x,y,z)$. We shall find $(I^k)^*$ for all $k$ and show that for $N \ge 3$, $r^*(I)=N-2$. We recall a few definitions first. Let $q=p^e$, for some $e \in \mathbb{N}.$

\begin{Definition}	[{\rm \cite[1.4.8]{hochstertight0}}] 
	{\rm If $R$ is a finitely generated algebra over a reduced Noetherian domain $A$, we say that $R$ is \emph{generically smooth} over $A$ if there exists $a \in A^\circ$ such that $R_a$ is smooth over $A_a$. 
}\end{Definition}

\begin{Definition}{\rm
	We say that a Noetherian local ring $(A,\m)$ is \emph{formally equidimensional} if its $\m$-adic completion $\hat{A}$ is equidimensional.
}\end{Definition}

\begin{Definition}{\rm
	Let $R$ be a Noetherian ring. An element $c \in R^\circ$ is called a test element if for all ideals $I$ of $R$ and all $x \in I^*$, we have $cx^q \in I^{[q]}$, for all $q$.
}\end{Definition}

\begin{Remark}\label{HypSurfaceSec:rmk1}{\rm
	(i) Since $p \nmid N$, $X^N+Y^N+Z^N$ is an irreducible element in $\F[[X,Y,Z]]$. Indeed, set $T=Y/Z$. Then $Y^N+Z^N=Z^N(T^N+1)$. The polynomial $T^N+1$ has a non-zero derivative and hence is separable over $\ov{\F}$. This implies that $Y^N+Z^N$ is a product of $N$ distinct linear forms. Using Eisenstein's criterion, $X^N+Y^N+Z^N$ is irreducible over $\ov{\F}$ and hence over $\F$. Thus, $R$ is a domain.
		
	(ii) Observe that $\gr_{\m}(R)$ is isomorphic to $\F[X,Y,Z]/(X^N+Y^N+Z^N)$ and therefore, it is reduced. This implies that $\ov{\m^k} = \m^k$, for all $k$. As $I$ is a reduction of $\m$, we get $\overline{I^k}=\m^k$, for all $k$.
}\end{Remark}

M. Hochster discovered  a remarkable result  on test elements which is often used for calculation of tight closure. Let $A \subseteq R$ be a module-finite extension, where $A$ is a Noetherian domain, $R$ is a torsion-free $A$-module and the extension is generically smooth. Write $R \simeq A[x_1,\dots,x_n]/P$. Then $\J=\J(R/A)$ is the ideal generated in $R$ by the images of all the Jacobian determinants $\partial(g_1,\dots,g_n)/\partial(x_1,\dots,x_n)$, for $n$-tuples $g_1,\dots,g_n$ of elements of $P$. Moreover, to generate $\J(R/A)$ it suffices to take all the $n$-tuples of $g_i$ from a fixed set of generators of $P$.

\begin{Theorem}[{\cite[Corollary 8.2]{HochLeus}}]\label{testelements}
	Let $R$ be a domain that is module-finite over a regular domain $A$ of characteristic $p>0$, such that the corresponding extension of fraction fields is separable. Then every non-zero element of $\J$ is a test element, where $\J=\J(R/A)$ is the relative Jacobian ideal.
\end{Theorem}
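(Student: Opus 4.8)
\medskip\noindent\textbf{Proof strategy.}
This is \cite[Corollary~8.2]{HochLeus}, and the plan is to recall the architecture of its proof, whose engine is the Lipman--Sathaye Jacobian theorem. Fix a nonzero $c\in\J$; one must show $cx^q\in I^{[q]}$ for every ideal $I$ of $R$, every $x\in I^*$, and every $q=p^e$, together with the analogous membership after localizing and completing (so that $c$ is a \emph{completely stable} test element). The three ingredients are: a geometric statement placing the complement of $V(\J)$ inside $\Reg(R)$; a reduction, via smooth base change, to an $F$-finite situation and to the bare containment; and the Lipman--Sathaye bound, which is what lets a single element of $\J$ — rather than a high power of a generic element of the regular locus — do the job.

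First I would record the geometric input. Since $\mathrm{Frac}(R)$ is finite and separable over $\mathrm{Frac}(A)$, generic flatness shows that $R$ is smooth over $A$ on a dense open subset of $\Spec R$, and by the Jacobian criterion that open set is precisely $\Spec R\setminus V(\J)$; in particular $\J\neq 0$. Because $A$ is regular, smoothness of $R$ over $A$ at a prime $\q$ forces $R_\q$ to be regular. Hence $R_c$ is regular for every nonzero $c\in\J$; in particular $R$ is reduced (being a domain) and is normal away from $V(\J)$.

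Next come the reductions. Completion of a regular local ring is again regular, and both $\J$ and tight closure are compatible with the relevant faithfully flat base changes (inserting a $\Gamma$-construction of \cite{HochLeus} when an $F$-finiteness hypothesis is needed); this lets one assume $R$ is $F$-finite and reduces the assertion to the bare containment $cx^q\in I^{[q]}$ in $R$, since $c$ still lies in the Jacobian ideal of the base-changed extension, so local and complete stability propagate. Now the Lipman--Sathaye Jacobian theorem enters: for $R$ module-finite, torsion-free, and generically separable over a regular domain $A$, one has $\J(R/A)\cdot\ov R\subseteq R$, where $\ov R$ is the integral closure of $R$ in its fraction field; moreover, because the isomorphism $F^e$ identifies the extension $A\subseteq R$ with $A^{1/q}\subseteq R^{1/q}$ (so that $\J(R^{1/q}/A^{1/q})=\J^{1/q}$), the chain rule for Jacobian ideals along $A\subseteq R\subseteq R^{1/q}\subseteq\ov{R^{1/q}}$ yields the same type of bound at every level of the Frobenius tower. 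In the $F$-finite case $R^{1/q}$ is module-finite over $R$ and $(R^{1/q})_c$ is free over the regular ring $R_c$; the content of \cite[\S\S\,3,\,8]{HochLeus} is that the Lipman--Sathaye bound then furnishes, \emph{uniformly in $q$}, multipliers in $\J$ that carry the relevant $R^{1/q}$-level data back into $R$. Feeding this into the standard tight-closure calculation — writing, for $x\in I^*$, a relation $dx^{qq'}\in (I^{[q]})^{[q']}$ with $d\in R^\circ$ for all $q'\gg 0$, extracting $q'$-th roots, and multiplying back down by $c$, uniformly in $q'$ — converts the tight-closure membership into the honest containment $cx^q\in I^{[q]}$ upon letting $q'\to\infty$.

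The hard part will be the Lipman--Sathaye theorem itself and, above all, the uniformity of the Jacobian multipliers across all Frobenius powers: the crux is that a \emph{single} element of $\J$, not merely a large power of a generic $c$ with $R_c$ regular, simultaneously clears the obstruction for all $q$ and all $q'$, and this rests on the precise bound $\J(R/A)\cdot\ov R\subseteq R$ together with the chain-rule behaviour of Jacobian ideals along the composites above. By contrast the geometric input and the smooth-base-change reductions are comparatively formal.
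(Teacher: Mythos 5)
The paper does not prove this statement at all: it is imported verbatim from the cited source (Corollary 8.2 there) and used as a black box to produce the test element $z^{N-1}$ in Section 5, so there is no internal proof to compare your sketch against. Judged as a reconstruction of the external argument, your outline is essentially the standard one: smoothness of $R$ over $A$ off $V(\J)$ via the Jacobian criterion, reduction to the $F$-finite/complete case (the $\Gamma$-construction) for complete stability, and the Lipman--Sathaye theorem as the source of multipliers that work uniformly in $q$. One point worth sharpening: the form in which Lipman--Sathaye is actually deployed is the containment $c\,R^{1/q}\subseteq A^{1/q}[R]$ for every $q$ and every $c\in\J(R/A)$, obtained by applying the bound ``Jacobian ideal times integral closure lands in the ring'' to the extension $A^{1/q}\subseteq A^{1/q}[R]$ (whose integral closure contains $R^{1/q}$), and then one uses flatness of $A^{1/q}$ over the regular ring $A$ (Kunz) to identify $A^{1/q}[R]$ with $A^{1/q}\otimes_A R$, faithfully flat over $R$, before running the $d^{1/Q}$-root-and-multiply argument you describe; your phrase about multipliers ``carrying $R^{1/q}$-level data back into $R$'' gestures at this but does not pin down the statement that makes the uniformity work. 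Finally, be clear that what you have written is a strategy sketch, not a proof: the Lipman--Sathaye theorem and the base-change compatibilities are invoked, not established, which is perfectly appropriate here since the paper itself only cites the result.
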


Let $A=\F[X,Y]$. Then $R \simeq A[Z]/(X^N+Y^N+Z^N)$. As the extension $A\subseteq R$ satisfies all the assumptions of the above theorem, $z^{N-1} \in \J(R/A)$ is a test element. 

\begin{Lemma}\label{initialideal}
	Let $A=\F[X,Y,Z]$ be a polynomial ring. Let $k \in \N$ and $q=p^e$, for some $e$. Let $J=(X^N+Y^N+Z^N,(Y^q,Z^q)^k)$ be an ideal in $A$. If $>$ denotes the graded reverse lex ordering with $X>Y>Z$, then $\ini_{>}(J)=(X^N,(Y^q,Z^q)^k)$. 
\end{Lemma}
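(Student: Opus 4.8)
The plan is to exhibit an explicit Gr\"obner basis of $J$ with respect to the given order; the stated equality then falls out. Write $f_0=X^N+Y^N+Z^N$ and, for $0\le i\le k$, set $g_i=Y^{qi}Z^{q(k-i)}$, so that $J=(f_0,g_0,\dots,g_k)$ since $(Y^q,Z^q)^k=(g_0,\dots,g_k)$. First I would pin down the leading terms. Under graded reverse lex with $X>Y>Z$ one has $X^N>Y^N>Z^N$ (among monomials of equal degree reverse lex favours the one with the smaller exponent on the smallest variable that occurs, so $Z^N$ is least, and then $X^N$ beats $Y^N$); hence $\ini_{>}(f_0)=X^N$. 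Each $g_i$ is a monomial, so it is its own leading term. Thus $(X^N,(Y^q,Z^q)^k)\subseteq\ini_{>}(J)$ is immediate, and the content of the lemma is the reverse inclusion.

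For that, I would check that $G=\{f_0,g_0,\dots,g_k\}$ is a Gr\"obner basis of $J$ by Buchberger's criterion. The $S$-polynomial of any two of the monomials $g_i,g_j$ is $0$, so only the pairs $(f_0,g_i)$ need attention. Since $\ini_{>}(f_0)=X^N$ and $\ini_{>}(g_i)=Y^{qi}Z^{q(k-i)}$ involve disjoint variables, $\mathrm{lcm}(X^N,g_i)=X^N g_i$, and a one-line computation gives
\[
S(f_0,g_i)=g_i f_0-X^N g_i=(Y^N+Z^N)g_i=Y^N g_i+Z^N g_i .
\]
Both summands on the right are monomial multiples of $g_i\in G$, so $S(f_0,g_i)$ reduces to $0$ modulo $G$ (one may instead invoke the coprime--leading--term criterion). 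Hence $G$ is a Gr\"obner basis of $J$, so $\ini_{>}(J)$ is generated by $\ini_{>}(f_0),\ini_{>}(g_0),\dots,\ini_{>}(g_k)$, that is, by $X^N,g_0,\dots,g_k$, which is exactly $(X^N,(Y^q,Z^q)^k)$.

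I do not expect a real obstacle: the crux is simply that, because the leading monomial $X^N$ of the hypersurface relation is supported on a variable disjoint from those in the monomial generators of $(Y^q,Z^q)^k$, the relevant $S$-polynomials collapse to the manifestly reducible form $(Y^N+Z^N)g_i$, and Buchberger's criterion then closes the argument at once. As a cross-check one may argue Hilbert-theoretically instead: the image of $f_0$ in $A/(Y^q,Z^q)^k\cong\bigl(\F[Y,Z]/(Y^q,Z^q)^k\bigr)[X]$ has leading coefficient $1$ in $X$, hence is a nonzerodivisor, so the Hilbert series of $A/J$ equals $(1-t^N)$ times that of $A/(Y^q,Z^q)^k$, which in turn equals the Hilbert series of $A/(X^N,(Y^q,Z^q)^k)$; since $\ini_{>}(J)\supseteq(X^N,(Y^q,Z^q)^k)$ and $A/\ini_{>}(J)$ has the same Hilbert function as $A/J$, the two ideals coincide.
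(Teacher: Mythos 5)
Your proof is correct and takes essentially the same route as the paper: show that $\{X^N+Y^N+Z^N\}\cup\{Y^{qi}Z^{q(k-i)}\}_{i=0}^{k}$ is a Gr\"obner basis via Buchberger's criterion, observing that the only nontrivial $S$-polynomials collapse to $(Y^N+Z^N)\,Y^{qi}Z^{q(k-i)}$ and reduce to zero. You are a bit more explicit about why the reduction terminates (or, equivalently, invoke the coprime--leading--term criterion), and your Hilbert-series argument is a nice independent cross-check, but the core argument is identical to the paper's.
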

\begin{proof}
	It is sufficient to show that $G=\{X^N+Y^N+Z^N, Y^{qr}Z^{q(k-r)}, r=0,1,\dots,k \}$ forms a Gr\"{o}bner basis of $J$. For $r \in \{0,1,\dots,k \}$, consider the $S$-polynomial of polynomials $f=X^N+Y^N+Z^N$ and $g=Y^{qr}Z^{q(k-r)}$.
	\begin{align*}
	S(f, g) &= \frac{X^NY^{qr}Z^{q(k-r)}}{X^N}  (X^N+Y^N+Z^N) - \frac{X^NY^{qr}Z^{q(k-r)}}{Y^{qr}Z^{q(k-r)}}  Y^{qr}Z^{q(k-r)}\\
	&= Y^{qr}Z^{q(k-r)}(Y^N+Z^N).
	\end{align*}
	Using division algorithm, one finds that $\ov{S(X^N+Y^N+Z^N, Y^{qr}Z^{q(k-r)})}^G=0$. Now, for some $r_1,r_2 \in \{0,1,\dots,k \}$, $r_1 \not= r_2$, consider the $S$-polynomial $S(Y^{qr_1}Z^{q(k-r_1)}, Y^{qr_2}Z^{q(k-r_2)})$. As $S$-polynomial of monomials is zero, it follows that $G$ forms a Gr\"{o}bner basis of $J$ and hence $\ini_{>}(J)=(X^N,(Y^q,Z^q)^k)$.	
\end{proof}

\begin{Proposition}
	If $N=2$, then $R$ is $F$-rational. 
\end{Proposition}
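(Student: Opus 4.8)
The plan is to prove that the parameter ideal $I=(y,z)$ is tightly closed. Since $R$ is a two-dimensional Cohen--Macaulay local domain (it is a hypersurface, and a domain by Remark~\ref{HypSurfaceSec:rmk1}), once $I^{*}=I$ is established it follows that $R$ is $F$-rational exactly as in the $F$-rationality criterion proved in Section~4; alternatively, Corollary~\ref{Watanabe} then gives $(I^n)^{*}=I^n$ for all $n$, so $\ell(R/(I^n)^{*})=\ell(R/I^n)$ and hence $e_1^{*}(I)=0$. Thus the whole content is the inclusion $I^{*}\subseteq I$.

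First I would record the module structure special to $N=2$: the relation becomes $x^2=-(y^2+z^2)$, so $R=\F[[y,z]][x]/(x^2+y^2+z^2)$ is a free $\F[[y,z]]$-module with basis $1,x$, and both $I$ and $I^{[q]}$ (for $q=p^e$) become the coordinatewise submodules $(y,z)\F[[y,z]]\oplus(y,z)\F[[y,z]]\,x$ and $(y^q,z^q)\F[[y,z]]\oplus(y^q,z^q)\F[[y,z]]\,x$. Writing a general element as $a=f+gx$ with $f,g\in\F[[y,z]]$, membership $a\in I$ amounts to $f(0,0)=g(0,0)=0$. Now let $a\in I^{*}$. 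Since $z=z^{N-1}$ is a test element for $R$ (Theorem~\ref{testelements} and the discussion preceding this proposition), $za^q\in I^{[q]}$ for all $q$. As $p$, hence $q$, is odd, we have $x^q=(-1)^{(q-1)/2}(y^2+z^2)^{(q-1)/2}x$, and the Frobenius gives $a^q=f^q+g^q x^q$. Comparing the two $\F[[y,z]]$-coordinates of $za^q$ yields $zf^q\in(y^q,z^q)$ and $zg^q(y^2+z^2)^{(q-1)/2}\in(y^q,z^q)$.

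The first relation is harmless (it also follows from $I^{*}\subseteq\overline{I}=\m$, cf.\ Remark~\ref{HypSurfaceSec:rmk1}) and forces $f\in(y,z)$. The decisive point is the second. Write $g=u+g'$ with $u=g(0,0)\in\F$ and $g'\in(y,z)$; the Frobenius gives $g^q=u^q+(g')^q$ with $(g')^q\in(y^q,z^q)$, so the second relation collapses to $u^q\,z(y^2+z^2)^{(q-1)/2}\in(y^q,z^q)$. If $u\neq0$, then $u^q$ is a unit, and we would need $z(y^2+z^2)^{(q-1)/2}\in(y^q,z^q)$; but $z(y^2+z^2)^{(q-1)/2}=\sum_{k=0}^{(q-1)/2}\binom{(q-1)/2}{k}y^{2k}z^{q-2k}$, and the term with $k=(q-1)/2$ is $y^{q-1}z$, with coefficient $1\neq0$, which is not in $(y^q,z^q)$ and is not cancelled since the monomials that occur are pairwise distinct --- a contradiction. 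Hence $u=0$, so $g\in(y,z)$, $a\in I$, and $I^{*}=I$.

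I expect the main obstacle to be the bookkeeping in the second paragraph rather than any single hard estimate: one has to choose the regular subring $\F[[y,z]]$ (not $\F[[x,y]]$) so that $I$ and its Frobenius powers split along the two coordinates, and then translate the test-element membership $za^q\in I^{[q]}$ into two genuine divisibility statements in the regular ring $\F[[y,z]]$. After that, everything rests on the survival of the single monomial $y^{q-1}z$, which is exactly where the oddness of $p$ (so that $x^q$ retains one factor of $x$, producing $(y^2+z^2)^{(q-1)/2}$) and the identity $\binom{(q-1)/2}{(q-1)/2}=1$ are used; all the structural inputs --- $R$ a Cohen--Macaulay domain, $z$ a test element, $\overline{I}=\m$ --- are already in place from earlier in the paper.
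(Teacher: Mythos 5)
Your proof is correct, and it takes a genuinely different route from the paper's. The paper first observes that $\ell(\m/I)=1$, so $I$ is tightly closed iff $x\notin I^*$, and then works in the polynomial ring $\F[X,Y,Z]$: it exhibits a Gr\"obner basis of $J_q=(X^2+Y^2+Z^2,Y^q,Z^q)$ (via Lemma~\ref{initialideal}, the graded reverse lex initial ideal is $(X^2,Y^q,Z^q)$), reduces $X^qZ$ modulo $J_q$ to $(-1)^u X(Y^2+Z^2)^u Z$, and observes the leading term $XY^{q-1}Z\notin\ini_>(J_q)$. Your argument instead uses the Weierstrass splitting $R=\F[[y,z]]\oplus\F[[y,z]]\,x$ as a free module over the regular subring $\F[[y,z]]$, notes that $I$ and $I^{[q]}$ are coordinatewise extended from $(y,z)$ and $(y^q,z^q)$, and then converts $za^q\in I^{[q]}$ into two monomial-ideal conditions in a two-variable power series ring, where the survival of $y^{q-1}z$ is immediate. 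This avoids the Gr\"obner basis machinery entirely and proves $I^*\subseteq I$ for a general element at once, rather than reducing to the single coset representative $x$; the trade-off is that it exploits the degree-$2$ hypersurface structure very directly (monic in one variable, so free of rank two over the other two), whereas the initial-ideal approach in the paper is the one that scales to the $N\ge3$ cases treated in the same section. Both hinge on the same underlying computation, namely that $x^q$ reduces to $\pm(y^2+z^2)^{(q-1)/2}x$ and the binomial term $y^{q-1}z$ falls outside $(y^q,z^q)$.
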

\begin{proof}
	As $R$ is Cohen-Macaulay, it is sufficient to show that $I=(y,z)$ is tightly closed. As $\ell(\m/I)=1$, it follows that $I$ is tightly closed if and only if $x \notin I^*$. As $z$ is a test element, we get $x \notin I^*$ if and only if $zx^q \notin I^{[q]} = (y^q,z^q)$, for some $q$. Let $J_q=(X^2+Y^2+Z^2,Y^q,Z^q)$ be an ideal in the polynomial ring $\F[X,Y,Z]$. We show that for some $q$, $X^qZ \notin J_q$.
	
	Write $q=2u+1$ and $q>2$. Let $>$ denote the graded reverse lex order with $X>Y>Z$ in the ring $\F[X,Y,Z]$. Using Lemma \ref{initialideal}, it follows that $\ini_{>}(J_q)=(X^2,Y^q,Z^q)$. Then 
	\[ X^qZ=X^{2u+1}Z\equiv (-1)^uX(Y^2+Z^2)^uZ \mod J_q. \]
	This implies that $X^qZ - (-1)^uX(Y^2+Z^2)^uZ \in J_q$. Thus, $X^qZ \in J_q$ if and only if $(-1)^uX(Y^2+Z^2)^uZ \in J_q$. But 
	\[ \ini_{>} ((-1)^uX(Y^2+Z^2)^uZ)=XY^{2u}Z = XY^{q-1}Z. \]
	Since $q>2$,  $\ini_{>}((-1)^uX(Y^2+Z^2)^uZ)=XY^{q-1}Z \not\in \ini_{>}(J_q)$. Thus $ZX^q \notin J_q$ and hence $ZX^q \notin J_q\F[[X,Y,Z]]$. This implies that $zx^q\not\in I^{[q]}$. Therefore, $x\not\in (I)^*$.
\end{proof}

\begin{Remark}{\rm
		If $N=2$, then using Corollary \ref{Watanabe}, $(I^k)^* = I^k$ for all $k \ge 1$. 
}\end{Remark}

\begin{Proposition}
	{\rm (i) }$(I^k)^*=\m^{k+1}+I^k$, for all $k \ge 1$ and
	{\rm (ii)} $r^*(I)=N-2$.
\end{Proposition}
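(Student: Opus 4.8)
The plan is to handle parts (i) and (ii) together, with (i) the substantive part. For (i), the inclusion $\m^{k+1}+I^k \subseteq (I^k)^*$ is the easy direction: since $\ov{I^k}=\m^k$ by Remark \ref{HypSurfaceSec:rmk1}(ii) and $I^k\subseteq(I^k)^*$, it suffices to show $\m^{k+1}\subseteq(I^k)^*$. Every degree-$(k+1)$ monomial in $x,y,z$ either already lies in $I^k=(y,z)^k$ or is divisible by $x^j$ with $j\geq 2$; in the latter case one uses the relation $x^N=-y^N-z^N\in I^k$ (valid once $N\geq k$; for $N<k$ one iterates) together with the test element $z^{N-1}$ to push $x^{k+1}$ and its neighbors into $(I^k)^{[q]}$. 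Concretely I would show $x^{k+1}\in(I^k)^*$ by writing $z^{N-1}(x^{k+1})^q$ and reducing $x^{Nq}$ via the defining equation, exactly as in the $N=2$ proposition, and then observe that all other degree-$(k+1)$ monomials involving $x$ are multiples of $x^{k+1}$ up to elements of $I^k$ when $N\geq k+1$, with the general case following by multiplying the $N$-specific computation by appropriate powers of $y,z$.

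For the reverse inclusion $(I^k)^*\subseteq \m^{k+1}+I^k$, I would argue by the structure of the quotient $\m^k/I^k$. We have $\ell(\m^k/\m^{k+1})$ and $\ell(\m^k/I^k)$ both computable from the Hilbert series of $R=\F[[X,Y,Z]]/(X^N+Y^N+Z^N)$; a monomial count shows $\m^k = I^k + (x^k, x^{k-1}, \dots)$-type leftover classes, but modulo $I^k$ and the relation $x^N\equiv -y^N-z^N$, the only classes not in $\m^{k+1}+I^k$ are represented by $x^j$ for $1\leq j\leq \min(k,N-1)$ (when $j\geq N$ the relation puts $x^j$ back into $I^k\subseteq\m^{k+1}+I^k$ after one more multiplication). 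So it remains to show none of $x, x^2,\dots, x^{\min(k,N-1)}$ — more precisely, no $R$-linear combination $\sum_{j} c_j x^j$ with some $c_j$ a unit — lies in $(I^k)^*$. Since $z^{N-1}$ is a test element, this reduces to checking $z^{N-1}(x^j)^q\notin (I^k)^{[q]}=(y^q,z^q)^k$ in $\F[[X,Y,Z]]/(X^N+Y^N+Z^N)$ for suitable $q$, which by Lemma \ref{initialideal} becomes a Gröbner-basis computation: $\ini_>\big((Y^N+Z^N+X^N,\,(Y^q,Z^q)^k)\big)=(X^N,(Y^q,Z^q)^k)$, and one checks that after reducing $z^{N-1}x^{jq}$ modulo $X^{N}$ the resulting leading term $z^{N-1}$ times a monomial in $Y,Z$ of $Y$-degree roughly $q(N-j)$ has total $Y$-and-$Z$-degree strictly less than $qk$ (this uses $j\leq N-1$ and $k\geq 1$, and is where choosing $q$ large and of the right residue modulo $N$ matters), hence is not in $(Y^q,Z^q)^k$.

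Given (i), part (ii) is a short computation. The tight reduction number $r^*(I)$ is the least $r$ with $(I^{r+1})^* = I\,(I^r)^*$. Using (i), $(I^{r+1})^*=\m^{r+2}+I^{r+1}$ and $I(I^r)^*=I\m^{r+1}+I^{r+1}$. Now $I\m^{r+1}=(y,z)\m^{r+1}$, and $\m^{r+2}=x\m^{r+1}+I\m^{r+1}$, so $(I^{r+1})^*=I(I^r)^*$ iff $x\m^{r+1}\subseteq (y,z)\m^{r+1}+I^{r+1}$, iff $x^{r+2}\in(y,z)\m^{r+1}+I^{r+1}$. Using $x^N\equiv -y^N-z^N$, the monomial $x^{r+2}$ enters $(y,z)\m^{r+1}+I^{r+1}$ exactly when $r+2\geq N$, i.e. $r\geq N-2$; and for $r=N-3$ a monomial-degree argument (or the Gröbner computation above with $j=r+2<N$) shows $x^{r+2}=x^{N-1}\notin (y,z)\m^{N-2}+I^{N-2}$. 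Hence $r^*(I)=N-2$.

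The main obstacle I expect is the reverse inclusion in (i) — specifically, verifying that $z^{N-1}x^{jq}$, after reduction modulo the relation, really has leading monomial outside $(Y^q,Z^q)^k$ for all $1\le j\le\min(k,N-1)$ simultaneously and for a single well-chosen $q$. This requires care with the residue of $q$ modulo $N$ (so that the exponent in $(Y^N+Z^N)^{\lfloor jq/N\rfloor}$ lands correctly) and with the bookkeeping of total degree in $Y,Z$ after the substitution; the $N=2$ proposition is the template, but one must push it through for every relevant $j$ and $k$ at once rather than just $j=1$.
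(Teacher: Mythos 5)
There is a genuine gap in the reverse inclusion of part (i). You reduce to showing that the pure powers $x^j$, $1\le j\le\min(k,N-1)$, are not in $(I^k)^*$, asserting that ``the only classes not in $\m^{k+1}+I^k$ are represented by $x^j$.'' That is false. Since $\m^k=\sum_{j=0}^k x^jI^{k-j}$, the degree-$k$ monomials not already in $I^k$ are \emph{all} of $x^ay^bz^c$ with $a\ge 1$ and $a+b+c=k$ (with $a<N$ after reducing mod the relation), not just the pure powers; for example when $k=3$ and $N$ is large, $x^2y$ and $xyz$ lie in $\m^3\setminus(\m^4+I^3)$ and are not captured by your reduction. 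So you must rule out every such $x^ay^bz^c$ from $(I^k)^*$ --- this is precisely what the paper does in its claims (2) and (3), via the same Gr\"obner computation you sketch but carried out for general $(a,b,c)$ rather than $b=c=0$. Your ``$R$-linear combination $\sum_j c_jx^j$ with some $c_j$ a unit'' phrasing only detects elements with a nonzero pure-$x^k$ coefficient, so it misses exactly the monomials that cause the problem. Note also that one cannot deduce $x^ay^bz^c\notin(I^k)^*$ from $x^a\notin(I^{a})^*$: the implication $f\notin J,\ g\in K\Rightarrow fg\notin JK$ fails in general, so a reduction to pure powers is not available for this direction.

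A second, related omission: checking monomial by monomial is only legitimate because $(I^k)^*$ is itself a monomial ideal. The paper explicitly invokes this (tight closure of a monomial ideal in a hypersurface ring is a monomial ideal, citing Sullivant); you do not, and your attempted substitute --- considering $R$-linear combinations of $x^j$ --- does not encompass a general element of $(I^k)^*$. The forward inclusion has a milder version of the same issue: $x^2y^{k-1}$ is a degree-$(k+1)$ monomial involving $x$ that is not a multiple of $x^{k+1}$, so the claim that all such monomials reduce to $x^{k+1}$ is wrong as stated, though the idea of multiplying the pure-power computation by $(y^bz^c)^q$ does salvage it (since $y^{bq}z^{cq}\in(y^q,z^q)^{b+c}$, it suffices to put $z^{N-1}x^{aq}$ in $(y^q,z^q)^{a-1}$). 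Your argument for part (ii), given part (i), is correct and essentially identical to the paper's: $I\m^{N-1}=\m^N$ via $x^N=-y^N-z^N$, and $x^{N-1}\notin(y,z)\m^{N-2}+I^{N-2}$ by comparing pure $X$-powers after lifting to $\F[[X,Y,Z]]$.
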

\begin{proof}
	$(i)$: Observe that, using \cite[Lemma 3.1]{sullivant}, the tight closure of a monomial ideal in a hypersurface ring is a monomial ideal. Notice that $(I^k)^*\subseteq\overline{I^k}=\m^k$, for all $k \ge 1$ (Remark \ref{HypSurfaceSec:rmk1}(ii)). 
	We prove the following statements.\\
	(1) $\m^{k+1}\subseteq (I^k)^*$.\\
	(2) $x^ay^bz^c\not\in (I^k)^*$ for $a\geq 1$ and $a+b+c=k$.\\ 
	(3) $x^ay^bz^c\not\in (I^k)^*$ for $a+b+c < k$.
	
	Suppose the above statements have been proved. Then the conclusion $I^k + \m^{k+1} \subseteq (I^k)^*$ of $(1)$ is obvious. It also shows that any monomial in $\m^k$ whose exponent of $x$ is zero is in $(I^k)^*$. On the other hand, the statement $(2)$ shows that any monomial of degree $k$ in $\m^k$ whose exponent of $x$ is 1 or greater cannot be in $(I^k)^*$. The last part proves that the remaining monomials, i.e., monomials of degree less than $k$ in $R$ are not in $I^*$. Thus, the above statements give a proof of $(i)$.
	
	We now proceed to give a proof of the statements (1),(2) and (3) above. \\
	(1) As $I^{k+1}\subseteq (I^k)^*$, we show that $x^ay^bz^c\in (I^k)^*$, where $a+b+c=k+1$ and $a \ge 1$. We show that $z^{N-1} (x^ay^bz^c)^q\in (I^k)^{[q]}=(y^q,z^q)^k$, where $q=p^e$ for large $e$. Choose $q>N$. Let $aq=Nu+i$ where $i\in \{1,\dots,N-1\}$. Then 
	\begin{align} 
	z^{N-1}(x^ay^bz^c)^q
	&=y^{bq}z^{cq+N-1}x^{aq}=y^{bq}z^{cq+N-1}x^{Nu+i}\nonumber\\
	&=(-1)^uy^{bq}z^{cq+N-1}x^i(y^N+z^N)^u\nonumber\\
	&=(-1)^uy^{bq}z^{cq+N-1}x^i\sum_{j=0}^u{u\choose j}y^{Nj}z^{N(u-j)}. \label{hypersurfaceSectioneq1}
	\end{align}
	
	We first re-write the exponents of $y$ and $z$ in \eqref{hypersurfaceSectioneq1} in the form, namely $r_1q+s_1$ and $r_2q+s_2$ where $s_1,s_2\geq 0$ respectively. Now to check if $z^{N-1}(x^ay^bz^c)^q\in (y^q,z^q)^k$, it is enough to check if $r_1+r_2\geq k$. The exponents of $y$ in \eqref{hypersurfaceSectioneq1} are $Nj+bq$ and the exponents of $z$ in \eqref{hypersurfaceSectioneq1} are $N(u-j)+cq+N-1$, for all $j=0,1,\dots,u$.
	
	Suppose $Nj=dq+r$ where $0<r<q$. Then the exponent of $y$ is $(b+d)q+r$ and the exponent of $z$ is
	\begin{align*}
	N(u-j)+N-1+cq&=Nu-Nj+N-1+cq\\
	&=aq-i-dq-r+N-1+cq\\
	&=(a+c-d-1)q+(q-(i+r-N+1)).
	\end{align*}
	Since $i<N, r<q$, we have $i+r+1\leq N+q$ or $i+r-N+1\leq q$. If $0\leq i+r-N+1\leq q$, then the sum of the multiples of $q$ appearing in the exponent of $y$ and $z$ is 
	\[ (b+d)q+(a+c-d-1)q=(a+b+c-1)q=kq. \]
	
	Also, if $i+r-N+1<0$, then $i+r-N+1=-h$ for some $h>0$. Thus the exponent of $z$ is 
	\[ (a+c-d-1)q+(q-(i+r-N+1))=(a+c-d-1)q+q+h=(a+c-d)q+h. \] 
	Now the sum of multiples of $q$ appearing in the the exponent of $y$ and $z$ is $(b+d)q+(a+c-d)q=(k+1)q$. Therefore, $z^{N-1} (x^ay^bz^c)^q\in (I^k)^{[q]}$ and hence $x^ay^bz^c\in (I^k)^*$, where $a+b+c=k+1$ and $a\ge 1$.
	
	(2) Now we show that $x^ay^bz^c\not\in (I^k)^*$, if $a\geq 1$ and $a+b+c=k$. Observe that $x^ay^bz^c\notin (I^k)^*$ if and only if $z^{N-1}(x^ay^bz^c)^q \notin (I^k)^{[q]} = (y^q,z^q)^k$, for $q=p^e$ and $e$ large. Let $q>N$ and $aq=Nu+i$ where $i\in\{1,\dots,N-1\}$. 
	
	Let $>$ denote the graded reverse lex order with $X>Y>Z$ in $A=\F[X,Y,Z]$. Let $J=(X^N+Y^N+Z^N,(Y^q,Z^q)^k)$ be an ideal in $A$. Using Lemma \ref{initialideal}, it follows that $\ini_{>}(J)=(X^N,(Y^q,Z^q)^k)$. Now
	\begin{align*}
	Z^{N-1}(X^{aq}Y^{bq}Z^{cq})=X^{Nu}X^iY^{bq}Z^{cq+N-1} \equiv (-1)^uX^i(Y^N+Z^N)^uY^{bq}Z^{cq+N-1} \mod J.
	\end{align*}
	Let $f=(-1)^uX^i(Y^N+Z^N)^uY^{bq}Z^{cq+N-1}$. It follows that $Z^{N-1}(X^{aq}Y^{bq}Z^{cq}) - f \in J$. Therefore, $Z^{N-1}(X^{aq}Y^{bq}Z^{cq}) \in J$ if and only if $f \in J$. Observe that  \[ \ini_{>}(f)=\ini_{>}((-1)^uX^i(Y^N+Z^N)^uY^{bq}Z^{cq+N-1})=X^iY^{bq+Nu}Z^{cq+N-1}. \]
	The exponent of $Y$ in $\ini_{>}(f)$ is
	\begin{align*}
	bq+Nu=bq+aq-i=(b+a)q-i=(k-c)q-i=(k-c-1)q+(q-i).
	\end{align*}
	The exponent of $Z$ in $\ini_{>}(f)$ is $cq+N-1$. The multiples of $q$ appearing in the exponents of  $Y$ and $Z$ is $(k-c-1)q$ and $cq$ respectively. Thus the sum of the multiples of $q$ appearing in the exponents of  $Y$ and $Z$ is $(k-c-1)q+cq=(k-1)q$. Since $N<q$, $\ini_{>}(f)\not\in \ini_{>}(J)$. Thus $Z^{N-1}X^{aq}Y^{bq}Z^{cq}\not\in J$ and hence $Z^{N-1}X^{aq}Y^{bq}Z^{cq}\not\in J\F[[X,Y,Z]]$. This implies that $z^{N-1}x^{aq}y^{bq}z^{cq}\not\in (I^k)^{[q]}$ or $x^{aq}y^{bq}z^{cq}\not\in (I^k)^*$.
	
	(3) Now we show that $x^ay^bz^c\not\in (I^k)^*$, for $a+b+c<k$. Observe that $x^ay^bz^c\notin (I^k)^*$ if and only if $z^{N-1}(x^ay^bz^c)^q \notin (I^k)^{[q]} = (y^q,z^q)^k$, for $q=p^e$ and $e$ large. Let $q>N$ and $aq=Nu+i$ where $i\in\{1,\dots,N-1\}$. 
	
	Let $>$ denote the graded reverse lex order with $X>Y>Z$ in $A=\F[X,Y,Z]$. Let $J=(X^N+Y^N+Z^N,(Y^q,Z^q)^k)$ be an ideal in $A$. Using Lemma \ref{initialideal}, it follows that $\ini_{>}(J)=(X^N,(Y^q,Z^q)^k)$. Using arguments as above, we get
	\begin{align*}
	Z^{N-1}(X^{aq}Y^{bq}Z^{cq}) \equiv (-1)^uX^i(Y^N+Z^N)^uY^{bq}Z^{cq+N-1} \mod J.
	\end{align*}
	Let $f=(-1)^uX^i(Y^N+Z^N)^uY^{bq}Z^{cq+N-1}$. It follows that $Z^{N-1}(X^{aq}Y^{bq}Z^{cq}) \in J$ if and only if $f \in J$. Observe that $\ini_{>}(f)=X^iY^{bq+Nu}Z^{cq+N-1}.$ The exponent of $Y$ in $\ini_{>}(f)$ is
	\begin{align*}
	bq+Nu=(b+a)q-i=(b+a-1)q+(q-i).
	\end{align*}
	The exponent of $Z$ in $\ini_{>}(f)$ is $cq+N-1$. The multiples of $q$ appearing in the exponents of $Y$ and $Z$ are $(b+a-1)q$ and $cq$ respectively. Thus the sum of the multiples of $q$ appearing in the exponents of  $Y$ and $Z$ is $(b+a-1)q+cq<kq$. Since $N<q$, $\ini_{>}(f)\not\in \ini_{>}(J)$. Thus $Z^{N-1}X^{aq}Y^{bq}Z^{cq}\not\in J$ and hence $Z^{N-1}X^{aq}Y^{bq}Z^{cq}\not\in J\F[[X,Y,Z]]$. This implies that $z^{N-1}x^{aq}y^{bq}z^{cq}\not\in (I^k)^{[q]}$ or $x^{aq}y^{bq}z^{cq}\not\in (I^k)^*$.
	
	
	\noindent $(ii)$: We now prove that $r^*(I)=N-2$, i.e., $(I^k)^* = I(I^{k-1})^*$, for all $k \ge N-1$. In particular, we need to show that $\m^{k+1}+I^k = I(\m^k+I^{k-1}) = \m^kI+I^k$, for all $k \ge N-1$. Hence, it is enough to show that $I\m^k = \m^{k+1}$, for all $k \ge N-1$. Observe that 
	\[ x^N=-(y^N+z^N) = -y(y^{N-1})-z(z^{N-1}) \in I\m^{N-1}. \]
	This implies that $I\m^{N-1} = \m^N$. As $I$ is a reduction of $\m$ and as $I\m^{N-1} = \m^N$, we get $(I^k)^* = I(I^{k-1})^*$, for all $k \ge N-1$.
	
	Therefore, it is sufficient to show that $(I^{N-2})^* \not= I(I^{N-3})^*$. Consider
	\begin{align*}
	I(I^{N-3})^* &= I(\m^{N-2}+I^{N-3})\\
	&=I( (x^{N-2})+(x^{N-3})I+\cdots+(x^3)I^{N-5}+(x^2)I^{N-4}+I^{N-3})\\
	&= (x^{N-2})I+(x^{N-3})I^2+\cdots+(x^3)I^{N-4}+(x^2)I^{N-3}+I^{N-2}.
	\end{align*}
	Note that $(I^{N-2})^* = \m^{N-1}+I^{N-2}=(x^{N-1})+(x^{N-2})I+\cdots +(x^2)I^{N-3}+I^{N-2}$. We claim that $x^{N-1} \notin I(I^{N-3})^* = I(\m^{N-2}+I^{N-3})$. Observe that the result holds if the claim is proved. Suppose 
	$x^{N-1} \in I(\m^{N-2}+I^{N-3})$. This implies that 
	\[ X^{N-1} \in (Y,Z)(X,Y,Z)^{N-2}+(Y,Z)^{N-2}+(X^N+Y^N+Z^N). \]
	As there are no pure powers of $X$ with degree less $N$ on the right hand side, we get a contradiction. Hence, the claim is proved.
\end{proof}

\begin{Example}\label{N=3}{\rm
	(${\bf N=3}$) Consider the ring $R=\F[[X,Y,Z]]/(X^3+Y^3+Z^3)$ of dimension 2 with char $\F \not= 3$ and ideal $I=(y,z)$. We observe that for $k \ge 1$,
	\[ (I^{k})^*=(x^2)I^{k-1}+I^k \]
	and $r^*(I)=1$. Using Proposition \ref{itohTC} it follows that for all $n\ge 1$
	\begin{align*}
	\ell\left(\frac{R}{(I^{n+1})^*}\right)
	&=\ell\left(\frac{R}{I}\right){{n+2}\choose 2} -\left[ \ell\left(\frac{I^*}{I}\right) + \ell\left(\frac{(I^2)^*}{II^*}\right) \right] {{n+1}\choose 1} + \ell\left(\frac{(I^2)^*}{II^*}\right).
	\end{align*}
	As $\ell(R/I)=3, \ell(I^*/I)=1$ and $\ell((I^2)^*/II^*)=0$, for $n\ge 1$ we have
	\[ \ell(R/(I^{n+1})^*)=3{{n+2}\choose 2}-(n+1). \]
}\end{Example} 

\begin{Example}\label{N=4}{\rm
	If $N=4,$ then the tight closures of powers of $I$ are given by:
	\begin{equation*}
	(I^k)^*=
	\begin{cases}
	(x^2,y,z)=(x^2)+I & \text{if $k=1$}\\
	(x^3)I^{k-2}+(x^2)I^{k-1}+I^k & \text{if $k\geq 2$}
	\end{cases}
	\end{equation*}
	and $r^*(I)= 2$. Using Proposition \ref{itohTC} for the ring $R=\F[[X,Y,Z]]/(X^4+Y^4+Z^4)$ of dimension 2 with char $\F \not= 2$ and ideal $I=(y,z)$, it follows that for all $n\ge 1$
	\begin{align*}
	\ell\left(\frac{R}{(I^{n+1})^*}\right)
	&=\ell\left(\frac{R}{I}\right){{n+2}\choose 2} -\left[ \ell\left(\frac{I^*}{I}\right) + \ell\left(\frac{(I^2)^*}{II^*}\right) \right] {{n+1}\choose 1} + \ell\left(\frac{(I^2)^*}{II^*}\right).
	\end{align*}
	As $\ell(R/I)=4, \ell(I^*/I)=2$ and $\ell((I^2)^*/II^*)=1$, for $n \ge 1$ we have
	\[ \ell(R/(I^{n+1})^*) = 4{{n+2}\choose 2}-3(n+1) + 1. \]
}\end{Example}

\section{The tight Hilbert polynomial in Stanley-Reisner rings}

In this section, we find the tight Hilbert polynomial of  a linear system of parameters in the Stanley-Reisner ring of a simplicial complex. We recall a few basic facts about Stanley-Reisner rings of simplicial complexes. Let $\Delta$ be a $(d-1)$-dimensional simplicial complex on $n$ vertices and $\kk$ be a field of a  positive prime characteristic $p.$ Let $I_{\Delta}$ be the ideal of $\Delta.$ Then $R= \kk[\Delta]=\kk[X_1,\dots ,X_n]/I_\Delta$ is a $d$-dimensional ring. Let $x_1,\dots,x_n$ denote the images of $X_1,\dots,X_n$ respectively in $R$, $\m=(x_1,\dots ,x_n)$ be the unique maximal homogeneous ideal of $\kk[\Delta]$ and $I$ be an ideal generated by a linear system of parameters.
The maximal faces under inclusion are called the facets of the simplicial complex. For any facet $F$ of $\Delta$, set $P_F=(x_i \mid i\notin F).$ Then $P_F$ is a minimal prime of $R$. Also, for any facet $F$ of $\Delta$, $\kk[F]=\kk[x_i \mid i \in F]$ is a polynomial ring and $\m_F=(x_i \mid i \in F)$ is its maximal homogeneous ideal. Note that $\kk[F]$ can also be seen as a residue class ring of $\kk[\Delta]$ in a natural way. Using \cite[Theorem 5.1.16]{brunsHerzog}, one can conclude that an ideal $I$ is generated by a linear system of parameters in $\kk[\Delta]$ if and only if for all facets $F$ of $\Delta$, $(I+P_F)/P_F \simeq \m_F$ in $\kk[F].$

Observe that as $R=\kk[\Delta]$ is a standard graded ring, it is isomorphic to the associated graded ring $\gr_\m(R)$. Hence $\gr_\m(R)$ is reduced. This implies that $\overline{\m^n} = (\m^n)^* = \m^n$, for all $n \ge 1$. In order to find the tight Hilbert function, $\ell(R/(I^n)^*)$, we first find the tight closure of $I$ and its powers.
\begin{Theorem}
	Let $\Delta$ be a $(d-1)$-dimensional simplicial complex on $n$ vertices. Let $R=\kk[\Delta]$ be the corresponding Stanley-Reisner ring, $\m$ be its unique maximal homogeneous ideal and $I_1, I_2,\dots, I_g$ be ideals generated by a linear system of parameters in $R$.\\
	{\rm(1)} Then $I_i^* = \m$, for all $i=1,\dots,g$.\\
	{\rm(2)} Let $s_1,s_2,\dots, s_g\in \mathbb{N}$. Then $(I_1^{s_1}I_2^{s_2}\cdots I_g^{s_g})^* = \m^{s_1+s_2+\cdots+s_g}.$
\end{Theorem}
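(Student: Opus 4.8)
The plan is to produce a single element $c\in R^\circ$ that serves as a tight-closure multiplier for all of these ideals and all Frobenius powers simultaneously, and to obtain it from the normalization of $R$. Since $R=\kk[\Delta]$ is reduced with minimal primes $\{P_F: F\ \text{a facet of}\ \Delta\}$ and each $R/P_F\cong\kk[F]$ is a polynomial ring, hence normal, the normalization of $R$ is $\ov{R}=\prod_F\kk[F]$; each $R/P_F$ is a cyclic $R$-module, so $\ov{R}$ is module-finite over $R$, and $R\to\ov{R}$ is an isomorphism after localizing at any minimal prime of $R$. Hence the conductor $\mathfrak{c}=(R:_R\ov{R})$ is contained in no $P_F$, and by prime avoidance I may fix $c\in\mathfrak{c}\cap R^\circ$; then $c\ov{R}\subseteq R$. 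I will also use two facts recalled in the text: $\gr_{\m}(R)\cong R$ is reduced, so $(\m^n)^*=\ov{\m^n}=\m^n$ for all $n$; and the Bruns--Herzog criterion $(I_j+P_F)/P_F\cong\m_F$, which says exactly that the image of $I_j$ under the surjection $R\twoheadrightarrow\kk[F]$ is the maximal ideal $\m_F$ of $\kk[F]$.

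For (1) the inclusion $I_i^*\subseteq\ov{I_i}\subseteq\ov{\m}=\m$ is immediate, so it remains to show $\m\subseteq I_i^*$. Let $x\in\m$ and $q=p^e$. In each factor $\kk[F]$ the image of $x$ lies in $\m_F$, so it is a $\kk[F]$-linear combination of the generators of $\m_F$; applying the characteristic-$p$ identity $(a+b)^q=a^q+b^q$, the image of $x^q$ becomes a combination of $q$-th powers of those generators, hence lies in $\m_F^{[q]}$. Since $\m_F$ is the image of $I_i$, this gives $x^q\in(I_i\ov{R})^{[q]}=I_i^{[q]}\ov{R}$. Multiplying by $c$ and using $c\ov{R}\subseteq R$ yields $cx^q\in I_i^{[q]}(c\ov{R})\subseteq I_i^{[q]}$ as ideals of $R$, for every $q$; therefore $x\in I_i^*$, and (1) follows.

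For (2) put $s=s_1+\cdots+s_g$ and $K=I_1^{s_1}\cdots I_g^{s_g}$. Since $K\subseteq\m^s$ we get $K^*\subseteq(\m^s)^*=\m^s$, and I would prove $\m^s\subseteq K^*$ by repeating the previous argument: the image of $K$ in $\kk[F]$ is $\m_F^{s_1}\cdots\m_F^{s_g}=\m_F^s$, and the image of any $x\in\m^s$ in $\kk[F]$ is a combination of products of $s$ generators of $\m_F$; raising to the $q$-th power via $(a+b)^q=a^q+b^q$ turns this into a combination of products of $s$ elements of $\m_F^{[q]}$, hence into an element of $(\m_F^{[q]})^s=(\m_F^s)^{[q]}$. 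Thus $x^q\in(K\ov{R})^{[q]}=K^{[q]}\ov{R}$, and multiplying by $c$ places $cx^q$ in $K^{[q]}\subseteq R$, so $x\in K^*$. (One may instead deduce the general case from the case $g=1$ via $\m^s=\m^{s_1}\cdots\m^{s_g}=(I_1^{s_1})^*\cdots(I_g^{s_g})^*\subseteq(I_1^{s_1}\cdots I_g^{s_g})^*$, using $A^*B^*\subseteq(AB)^*$.)

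The one genuinely nontrivial point is the construction of the universal multiplier $c$: when $\bigcap_F F=\emptyset$ no monomial of $R$ avoids every minimal prime, so one really must invoke the conductor of the normalization (equivalently, a test element of the reduced excellent ring $R$). Everything else is bookkeeping with the characteristic-$p$ binomial identity and the compatibility of Frobenius powers with finite products of rings and with ordinary powers of ideals.
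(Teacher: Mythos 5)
Your proof is correct, and it takes a genuinely different and more self-contained route than the paper. The paper's proof of (1) invokes, as a black box, the standard but non-trivial fact that an element $x$ lies in $I^*$ if and only if its image lies in the tight closure of the extended ideal in $R/p$ for every minimal prime $p$ of $R$; since each $R/P_F$ is the polynomial ring $\kk[F]$ in which the image of $I_i$ is the maximal ideal $\m_F$ (tightly closed because $\kk[F]$ is regular), this gives $\m \subseteq I_i^* \subseteq \ov{\m} = \m$ at once. For (2) the paper then applies $(IJ)^* = (I^*J^*)^*$ repeatedly to reduce to $(\m^s)^* = \m^s$. You instead re-prove the needed instance of the minimal-prime criterion from scratch by manufacturing a single multiplier $c$ from the conductor of the normalization $\ov{R} = \prod_F \kk[F]$: the conductor avoids every $P_F$, so prime avoidance supplies $c \in \mathfrak{c} \cap R^\circ$ with $c\ov{R} \subseteq R$, and the Frobenius identity $(a+b)^q = a^q + b^q$ together with the compatibility of $(-)^{[q]}$ with the surjections $R \twoheadrightarrow \kk[F]$ pushes $c x^q$ into $I_i^{[q]}$ (resp.\ $K^{[q]}$). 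This buys explicitness and avoids citing the localization/persistence machinery behind the minimal-prime criterion, at the modest cost of checking that $\ov{R}$ has the stated product form and that $\mathfrak{c}$ is not contained in any $P_F$. Both proofs rest on the same combinatorial input, namely the Bruns--Herzog criterion that $(I_i + P_F)/P_F = \m_F$, and the reducedness of $\gr_\m(R)$ for the containment $I_i^* \subseteq \m$; and your parenthetical alternative for (2) via $A^*B^* \subseteq (AB)^*$ is precisely the paper's argument.
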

\begin{proof}
	(1) Fix $i \in \{1,\ldots,g\}.$ For an element $x \in R$, we know that $x \in I^*$ if and only if the residue class of $x$ lies in $((I+p)/p)^*,$ for all minimal prime ideals $p$ of $R.$ Thus,
	\[ I_i^*=\{ x \in R \mid \ov{x} \in ((I_i+P_F)/P_F)^* = \m_F^*\kk[F], \text{ for every facet $F$ of } \Delta \}. \]
	Thus $x_1,\dots,x_n \in I_i^*$. This implies that $\m \subseteq I_i^* \subseteq \m$. Hence, $I_i^* = \m$.
	
	(2)	Consider
	\[ (I_1^{s_1}I_2^{s_2}\cdots I_g^{s_g})^* = ((I_1^*)^{s_1}(I_2^*)^{s_2}\cdots (I_g^*)^{s_g})^* = (\m^{s_1+s_2+\cdots+s_g})^* = \m^{s_1+s_2+\cdots+s_g}. \]
\end{proof}

We are now ready to calculate the tight Hilbert polynomial of power product of ideals generated by a  linear system of parameters. Let $h(\Delta)=(h_0, h_1,\dots ,h_d)$ denote the $h$-vector of $\kk[\Delta]$. The Hilbert series of $R$ is of the form
\[ H(\kk[\Delta],\lambda) = \frac{h_0+h_1\lambda+\cdots +h_d\lambda^d}{(1-\lambda)^d}. \]
Set $\h(\lambda)=h_0+h_1\lambda+\cdots +h_d\lambda^d$. Let $\h^{(i)}(\lambda)$ denote the $i^{th}$ derivative of $\h(\lambda)$ with respect to $\lambda$.

\begin{Theorem}\label{faceringpoly}
	Let $\Delta$ be a $(d-1)$-dimensional simplicial complex on $n$ vertices and $\kk$ be field of positive prime characteristic $p$. Let $R=\kk[\Delta]$ be the corresponding Stanley-Reisner ring, $\m$ be its unique maximal homogeneous ideal and $I_1, I_2,\dots, I_g$ be the ideals generated by a  linear system of parameters in $R$. Let $h(\Delta)=(h_0, h_1,\dots ,h_d)$ denote the $h$-vector of $R$. Let $s_1,s_2,\dots, s_g\in \mathbb{N}$ with $s_1+s_2+\cdots+s_g = n+1$. Then 
	\[ \ell\left(\frac{R}{(I_1^{s_1}I_2^{s_2}\cdots I_g^{s_g})^*}\right) = \ell\left(\frac{R}{\m^{n+1}}\right) = \sum_{i=0}^{d}(-1)^i\frac{\h^{(i)}(1)}{i!} {n+d-i\choose d-i}. \]
	In particular, $e_d^*(I_1^{s_1}I_2^{s_2}\dots I_g^{s_g}) = h_d$, for any $s_1,s_2,\dots,s_g\in \mathbb{N}$.
\end{Theorem}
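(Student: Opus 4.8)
The plan is to reduce the whole statement to a single generating-function manipulation with the Hilbert series of $R=\kk[\Delta]$, after first using the preceding theorem to replace tight closures by powers of $\m$. Since $s_1+\cdots+s_g=n+1$, the preceding theorem gives $(I_1^{s_1}I_2^{s_2}\cdots I_g^{s_g})^*=\m^{n+1}$, so $\ell\!\left(R/(I_1^{s_1}\cdots I_g^{s_g})^*\right)=\ell(R/\m^{n+1})$, which is the first asserted equality; it remains to evaluate $\ell(R/\m^{n+1})$. Because $R$ is standard graded, $\ell(R/\m^{n+1})=\sum_{j=0}^{n}\dim_\kk R_j$, hence the generating function satisfies
\[
\sum_{n\ge 0}\ell(R/\m^{n+1})\,\lambda^n=\frac{1}{1-\lambda}\,H(\kk[\Delta],\lambda)=\frac{\h(\lambda)}{(1-\lambda)^{d+1}}.
\]

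Next I would expand $\h$ in its (finite and exact, since $\deg\h\le d$) Taylor series about $\lambda=1$:
\[
\h(\lambda)=\sum_{i=0}^{d}\frac{\h^{(i)}(1)}{i!}(\lambda-1)^i=\sum_{i=0}^{d}(-1)^i\frac{\h^{(i)}(1)}{i!}(1-\lambda)^i .
\]
Substituting this into the displayed generating function and using $(1-\lambda)^{-(d+1-i)}=\sum_{n\ge 0}\binom{n+d-i}{d-i}\lambda^n$, one reads off the coefficient of $\lambda^n$ to obtain the exact identity $\ell(R/\m^{n+1})=\sum_{i=0}^{d}(-1)^i\frac{\h^{(i)}(1)}{i!}\binom{n+d-i}{d-i}$, valid for every $n\ge 0$. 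This is precisely the displayed formula.

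For the final ("in particular") assertion, fix $s_1,\dots,s_g$ and put $J=I_1^{s_1}\cdots I_g^{s_g}$ and $S=s_1+\cdots+s_g$. Applying the preceding theorem to $J^m=I_1^{ms_1}\cdots I_g^{ms_g}$ gives $(J^m)^*=\m^{mS}$, so the computation above (with $n=mS-1$) yields
\[
\ell\!\left(R/(J^m)^*\right)=\ell(R/\m^{mS})=\sum_{i=0}^{d}(-1)^i\frac{\h^{(i)}(1)}{i!}\binom{mS-1+d-i}{d-i}\qquad(m\ge 1).
\]
The right-hand side is a polynomial in $m$ agreeing with $H^*_J(m)$ for all $m\ge 1$, hence it is the tight Hilbert polynomial $P^*_J(m)$. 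Evaluating this polynomial at $m=0$, every term with $i<d$ dies because $\binom{d-1-i}{d-i}=0$, leaving $P^*_J(0)=(-1)^d\frac{\h^{(d)}(1)}{d!}=(-1)^d h_d$ (using $\h^{(d)}(1)=d!\,h_d$). On the other hand, evaluating the normalized form $P^*_J(m)=e_0^*(J)\binom{m+d-1}{d}-\cdots+(-1)^d e_d^*(J)$ at $m=0$ kills every term except the last, giving $P^*_J(0)=(-1)^d e_d^*(J)$; comparison gives $e_d^*(J)=h_d$, independently of the $s_i$.

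All the computations are routine, the genuine input being entirely the preceding theorem $(I_1^{s_1}\cdots I_g^{s_g})^*=\m^{s_1+\cdots+s_g}$. The only place calling for a little care is the bookkeeping in the last step: one must check that the polynomial extracted from the generating function really does coincide with the tight Hilbert polynomial $P^*_J$ (so that substituting $m=0$ is legitimate even though $P^*_J(0)\ne H^*_J(0)$ in general), and that the constant terms on the two sides are matched with the correct signs.
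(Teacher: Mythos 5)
Your proof is correct. The first paragraph follows the paper exactly: the preceding theorem converts the tight closure into $\m^{n+1}$, so the problem reduces to computing $\ell(R/\m^{n+1})$. The difference lies in how you carry out that computation and the last step.

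For the length formula, the paper invokes \cite[Proposition 4.1.9]{brunsHerzog} to get $\ell(\m^j/\m^{j+1})$ in terms of $\h^{(i)}(1)$, adds these up for $j=1,\dots,n$ together with $\ell(R/\m)=1$, and rearranges. You instead observe that since $R$ is standard graded, $\sum_{n\ge0}\ell(R/\m^{n+1})\lambda^n = H(\kk[\Delta],\lambda)/(1-\lambda) = \h(\lambda)/(1-\lambda)^{d+1}$, and then expand $\h$ in its (exact, degree-$d$) Taylor series at $\lambda=1$ and extract coefficients. Both routes ultimately rest on the Taylor expansion of $\h$ at $\lambda=1$, so they are morally the same; your generating-function version is a bit slicker and also directly yields validity of the identity for all $n\ge 0$ (the paper's summation is phrased for $n\ge 1$, though the $n=0$ case follows as well).

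For the ``in particular'' assertion, your argument is actually more careful and complete than the paper's. The paper identifies $e_d$ of the $\m$-adic Hilbert polynomial with $h_d$ by substituting $\lambda=0$ in the Taylor identity and then says this gives the claim, leaving implicit the passage from $e_d(\m)=h_d$ to $e_d^*(J)=h_d$ for $J=I_1^{s_1}\cdots I_g^{s_g}$. You fill in that gap explicitly: $(J^m)^*=\m^{m|S|}$ makes $H^*_J(m)$ a known polynomial in $m$ for all $m\ge1$, so it must equal $P^*_J$, and evaluating at $m=0$ (where $\binom{d-1-i}{d-i}=0$ for $i<d$) singles out $(-1)^d e_d^*(J)=(-1)^d h_d$. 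This is the right way to make the paper's closing sentence rigorous.
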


\begin{proof}
	As $R\simeq \gr_\m(R) = \oplus_{n\ge 0}\m^n/\m^{n+1}$ and as Hilbert function and Hilbert polynomial of $R$ coincide for all $n \ge 1$. Using {\cite[Proposition 4.1.9]{brunsHerzog}} for all $n\ge 1$, we have
	
	\begin{equation*}
	l\left(\frac{\m^n}{\m^{n+1}}\right)=\sum_{i=0}^{d-1}(-1)^i\frac{\h^{(i)}(1)}{i!} {n+d-1-i\choose d-1-i}.
	\end{equation*}
	This implies that for all $n\ge 1$,
	
	\begin{align*}
	\ell\left(\frac{R}{\m^{n+1}}\right)
	&=\ell\left(\frac{R}{\m}\right)+\sum_{j=1}^{n}\ell\left(\frac{\m^j}{\m^{j+1}}\right)\\
	&=1+\sum_{j=1}^{n}\sum_{i=0}^{d-1}(-1)^i\frac{\h^{(i)}(1)}{i!} {j+d-1-i\choose d-1-i}\\
	&=1+\sum_{i=0}^{d-1}(-1)^i\frac{\h^{(i)}(1)}{i!} \sum_{j=1}^{n} {j+d-1-i\choose j}\\
	&=1+\sum_{i=0}^{d-1}(-1)^i\frac{\h^{(i)}(1)}{i!} \left[{n+(d-1-i)+1\choose n}-1\right]\\
	&=1+\sum_{i=0}^{d-1}(-1)^i\frac{\h^{(i)}(1)}{i!} \left[{n+d-i\choose d-i}-1\right].
	\end{align*}
	Re-arranging the terms, we get
	\begin{align*}
	\ell\left(\frac{R}{\m^{n+1}}\right)
	&=1-\left(\sum_{i=0}^{d-1}(-1)^i\frac{\h^{(i)}(1)}{i!}\right)+\sum_{i=0}^{d-1}(-1)^i\frac{\h^{(i)}(1)}{i!} {n+d-i\choose d-i}.\\
	\end{align*}
	Thus for $0\le i\le d-1$, $e_i=\h^{(i)}(1)/i!$ and 
	\[(-1)^de_d = 1-\left(\sum_{i=0}^{d-1}(-1)^i\frac{\h^{(i)}(1)}{i!}\right). \]
	Observe that 
	\[ h_0+h_1\lambda+\cdots +h_d\lambda^d = \sum_{i=0}^{d}\frac{(-1)^i(1-\lambda)^i\h^{(i)}(1)}{i!}. \]
	Substituting $\lambda=0$, we get 
	\[ h_0-(-1)^dh_d= \sum_{i=0}^{d-1}\frac{(-1)^i\h^{(i)}(1)}{i!}= 1-(-1)^de_d. \]
	As $h_0=1$, it follows that $e_d(\m)=h_d$ giving us the required result.
\end{proof}
Recall the following result of Rees \cite[Theorem 2.5]{rees1981hilbert}.
\begin{Theorem}[Rees] 
	Let $(R,\m)$ be a 2-dimensional Cohen-Macaulay local ring, $\kk=R/\m$ be infinite. Let $I,J$ be $\m$-primary ideals. Then for any good joint reduction $(a,b)$ of the filtration $\I=\{ \ov{I^rJ^s} \}$, the following are equivalent:\\
	{\rm(1)} $\overline{e_2}(IJ)=\overline{e_2}(I)+\overline{e_2}(J)$.\\
	{\rm(2)} $a\overline{I^{r-1}J^s}+b\overline{I^rJ^{s-1}}=\overline{I^rJ^s}$, for all $r,s\ge 1$.
\end{Theorem}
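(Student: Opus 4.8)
The plan is to reduce the equivalence to a single length identity that expresses the ``defect'' $\ov{e_2}(IJ)-\ov{e_2}(I)-\ov{e_2}(J)$ as a nonnegative combination of the failures of the joint-reduction equality at individual bidegrees. Set $\mathcal{I}=\{\ov{I^rJ^s}\}_{r,s\ge 0}$, a $\Z^2$-graded filtration. Since $R$ is analytically unramified, Rees' theorem (\cite{reesAUrings}) applied to $\m$ forces the multi-Rees algebra $\bigoplus_{r,s\ge 0}\ov{I^rJ^s}u^rv^s$ to be module-finite over $R[Iu,Jv]$, so $\ell(R/\ov{I^rJ^s})$ agrees for $r,s\gg 0$ with a polynomial $P(r,s)\in\Q[r,s]$ of total degree $2$. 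As $\kk$ is infinite, a good joint reduction $(a,b)$ of $\mathcal{I}$ exists, with $a\in I$, $b\in J$ and $\ov{I^rJ^s}=a\ov{I^{r-1}J^s}+b\ov{I^rJ^{s-1}}$ for $r,s\gg 0$; since $R$ is Cohen--Macaulay of dimension $2$ and $a,b$ is a system of parameters, $a,b$ is a regular sequence. For $r,s\ge 1$ put
\[ M(r,s)=\ell\!\left(\frac{\ov{I^rJ^s}}{a\ov{I^{r-1}J^s}+b\ov{I^rJ^{s-1}}}\right), \]
a finite length which is $0$ for $r,s\gg 0$ (note $a\ov{I^{r-1}J^s}+b\ov{I^rJ^{s-1}}\subseteq\ov{I^rJ^s}$ always, since $I\ov K\subseteq\ov{IK}$). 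Condition (2) is exactly ``$M(r,s)=0$ for all $r,s\ge 1$'', so the theorem follows once we establish
\[ \ov{e_2}(IJ)-\ov{e_2}(I)-\ov{e_2}(J)=\sum_{r,s\ge 1}c_{r,s}\,M(r,s) \]
for suitable positive integers $c_{r,s}$: the left side vanishes if and only if every $M(r,s)$ does, because the right side is a sum of nonnegative integers and, by the identity itself, is finite.

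To prove the identity I would combine three ingredients. \emph{(i) The diagonal.} Since $\ov{I^nJ^n}=\ov{(IJ)^n}$, the one-variable normal Hilbert polynomial $\ov P_{IJ}(n)$ equals the restriction $P(n,n)$ for $n\gg0$, which expresses $\ov{e_0}(IJ)$, $\ov{e_1}(IJ)$, $\ov{e_2}(IJ)$ through the coefficients of $P$; similarly $\ov{e_2}(I)$, $\ov{e_2}(J)$ are read off from the restrictions of a suitably telescoped form of $P$ to the two axes. \emph{(ii) Bivariate telescoping.} Writing $\ell(R/\ov{I^rJ^s})=\ell\bigl(R/(a\ov{I^{r-1}J^s}+b\ov{I^rJ^{s-1}})\bigr)-M(r,s)$ and applying the first-difference operator in the $r$-direction, then in the $s$-direction --- using that $a$, respectively $b$, is a nonzerodivisor and that lengths of the form $\ell(R/(a))$, $\ell(R/(b))$ collapse against the multiplicity by Cohen--Macaulayness, exactly as in the one-dimensional argument behind Itoh's formula (Proposition \ref{itoh}) and its filtered generalizations (Theorems \ref{polyfilt} and \ref{HSpoly}, the latter with $d=1$) --- recovers $P(r,s)$ up to an explicit sum of the $M(i,j)$ with $i\le r$, $j\le s$, weighted by binomial coefficients. \emph{(iii) Matching constants.} Evaluating the telescoped expression along the diagonal and along each axis and comparing the degree-zero (constant) coefficients isolates $\ov{e_2}(IJ)$, $\ov{e_2}(I)$, $\ov{e_2}(J)$ and leaves precisely $\sum_{r,s\ge 1}c_{r,s}M(r,s)$ as the diagonal-minus-axes discrepancy.

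The step I expect to be the main obstacle is the bivariate telescoping in (ii), more precisely the evaluation of $\ell\bigl(R/(a\ov{I^{r-1}J^s}+b\ov{I^rJ^{s-1}})\bigr)$. The difficulty is that $\ov{I^{r-1}J^s}$ and $\ov{I^rJ^{s-1}}$ are not nested and integral closure does not commute with passing to $R/(a)$ or $R/(b)$, so the length cannot be split naively. The remedy is a bigraded Valabrega--Valla argument in the spirit of \cite[Corollary 2.7]{VV} and \cite[Lemma 9]{itoh88} (as already used in the paper): one reduces to the intersection equalities
\[ \ov{I^rJ^s}\cap(a)=a\ov{I^{r-1}J^s},\qquad \ov{I^rJ^s}\cap(b)=b\ov{I^rJ^{s-1}}, \]
equivalently to a depth-one statement for the bigraded $\gr$ of $\mathcal{I}$ along the $a$- and $b$-directions; these hold automatically for $r,s\gg 0$ because $(a,b)$ is good, while at small bidegrees they are governed by the very quantities $M(i,j)$ being computed, so that the telescoping and the identity must be proved together by a simultaneous induction on $r+s$. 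The positivity input needed to run this --- and to conclude $\ov{e_2}(IJ)\ge\ov{e_2}(I)+\ov{e_2}(J)$ --- is a mixed Itoh inequality obtained by exhibiting the module of total length $\sum_{r,s\ge 1}c_{r,s}M(r,s)$. An alternative packaging of the same content computes the three $\ov{e_2}$'s and the defect as graded pieces near bidegree $(0,0)$ of a bigraded local cohomology module $\HH^2$ of the multi-Rees algebra, and invokes a multigraded analogue of Itoh's vanishing theorem (Theorem \ref{itohvanish}) together with a Valabrega--Valla-type criterion to detect when that cohomology vanishes.
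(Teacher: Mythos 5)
This statement is quoted in the paper from Rees \cite{rees1981hilbert}; the paper itself gives no proof of it, so your proposal can only be judged on its own terms, and as it stands it has a genuine gap. Everything hinges on the asserted identity
\[
\ov{e}_2(IJ)-\ov{e}_2(I)-\ov{e}_2(J)=\sum_{r,s\ge 1}c_{r,s}\,M(r,s),\qquad c_{r,s}>0,
\]
which you never establish. That identity (even its weaker consequence $\ov{e}_2(IJ)\ge \ov{e}_2(I)+\ov{e}_2(J)$) is essentially the whole theorem plus a positivity statement, so stating it as the target and deferring its proof to a ``bivariate telescoping'' is not a reduction but a restatement. Moreover, the telescoping you describe cannot be run as sketched: to evaluate $\ell\bigl(R/(a\ov{I^{r-1}J^s}+b\ov{I^rJ^{s-1}})\bigr)$ by splitting along $a$ and $b$ you need the intersection/colon equalities $\ov{I^rJ^s}\cap(a)=a\ov{I^{r-1}J^s}$ and $\ov{I^rJ^s}\cap(b)=b\ov{I^rJ^{s-1}}$ at \emph{all} bidegrees, and you concede that at small bidegrees these ``are governed by the very quantities $M(i,j)$ being computed.'' That is exactly the circularity the proof must break, and the ``simultaneous induction on $r+s$'' that is supposed to break it is announced but not carried out; nothing in the proposal explains why the induction closes, nor what the coefficients $c_{r,s}$ are, nor why they are positive. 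Without positivity the direction $(1)\Rightarrow(2)$ does not follow even granting an identity of the displayed shape.

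The missing input is precisely what the hypothesis ``\emph{good} joint reduction'' is for in Rees's treatment: beyond generating $\ov{I^rJ^s}$ in high bidegrees, a good joint reduction carries superficiality/colon conditions (of the type $\ov{I^rJ^s}:b=\ov{I^rJ^{s-1}}$ within the relevant range) that make the one-step reductions $\ell(R/\ov{I^rJ^s})-\ell(R/\ov{I^rJ^{s-1}})$ computable and let one descend from the asymptotic polynomial to all $r,s\ge 1$, which is where the sum of the $M(r,s)$ actually appears. You use ``good'' only to get the asymptotic equality $\ov{I^rJ^s}=a\ov{I^{r-1}J^s}+b\ov{I^rJ^{s-1}}$ for $r,s\gg0$, so the extra structure that is designed to break the circularity is never invoked. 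The alternative ``packaging'' via a bigraded $\HH^2$ of the multi-Rees algebra and a multigraded analogue of Itoh's vanishing theorem is likewise only named, not proved, and such an analogue is not available in the paper you cite. To turn this into a proof you would need either (i) to formulate and use the full definition of good joint reduction to prove the colon conditions at all bidegrees and then derive the identity with explicit coefficients, or (ii) to prove the bigraded vanishing statement you allude to; at present neither is done.
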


\noindent
An analogue of this theorem in dimension $3$ was proved in \cite{mpv}. However, no such  analogue is known in dimension $d\geq 4.$   We generalize this theorem  for ideals generated by linear systems of parameters in Stanley-Reisner rings of  simplicial complexes.

\begin{Theorem}
	Let $\kk[\Delta]$ be a $d$-dimensional Cohen-Macaulay Stanley-Reisner ring of a  simplicial complex $\Delta$. Let $\m$ be its unique maximal homogeneous ideal and $I_1,I_2,\dots ,I_d$ be ideals generated by linear systems of parameters. Then for any joint reduction $(a_1,a_2,\dots ,a_d)$ of the $d$-tuple $(I_1,I_2,\dots,I_d)$, the following are equivalent:\smallskip\\
	{\rm(1)} $\Delta$ is Eulerian.\\
	{\rm(2)} $r(\m)\le d-1.$\\
	{\rm(3)} $a_1 (\underline{I}^{S_1})^* + a_2(\underline{I}^{S_2})^* + \cdots + a_d(\underline{I}^{S_d})^* = (\underline{I}^S)^*$, for all $s_1,s_2,\dots ,s_d\ge 1$.\\
	{\rm(4)} $\displaystyle e^*_d(I_1I_2\cdots I_d)-\sum_{i=1}^{d}\left(e^*_d \big( \prod_{j\not=i}I_j \big)\right)+\sum_{1\le p<q\le d}\left(e^*_d \big(\prod_{j\not=p,q}I_j \big)\right)-\cdots+(-1)^{d-1} \sum_{i=1}^{d} e^*_d(I_i)=0.$
\end{Theorem}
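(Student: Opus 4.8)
The plan is to show that conditions (1), (2) and (4) are each equivalent to the single numerical condition $h_d=0$, and then to match (3) with these; the only place where the joint reduction $(a_1,\dots,a_d)$ really enters is condition~(3).

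\emph{Rewriting the tight closures.} First I would use the preceding theorem on tight closure in $\kk[\Delta]$: for every nonempty $S\subseteq\{1,\dots,d\}$, writing $J_S=\prod_{j\in S}I_j$, one has $(J_S^{\,n})^{*}=\m^{n|S|}$; in particular $(\underline I^S)^{*}=\m^{\,s_1+\cdots+s_d}$ and $(\underline I^{S_i})^{*}=\m^{\,s_1+\cdots+s_d-1}$. Let $P(m)$ be the Hilbert--Samuel polynomial of $\m$, so $P(m)=\ell(R/\m^{m})$ for $m\gg0$; by Theorem \ref{faceringpoly}, $e_d(\m)=h_d$, hence $P(0)=(-1)^d h_d$. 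Since $\ell(R/(J_S^{\,n})^{*})=\ell(R/\m^{n|S|})=P(n|S|)$ for $n\gg0$, and the value of the tight Hilbert polynomial of $J_S$ at $0$ is $(-1)^d e_d^{*}(J_S)$ (all other binomial terms vanish at $0$), we get $e_d^{*}(J_S)=h_d$ for every nonempty $S$. Consequently the alternating sum in (4) equals $h_d\sum_{k=0}^{d-1}(-1)^k\binom{d}{k}=(-1)^{d-1}h_d$, so (4) is equivalent to $h_d=0$. For (1), the standard identity $\h(\lambda)=\sum_{j=0}^{d}f_{j-1}\lambda^{j}(1-\lambda)^{d-j}$ (with $f_{-1}=1$) gives, on reading off the coefficient of $\lambda^d$, $h_d=(-1)^{d}(1-\chi(\Delta))$, so (1) holds iff $h_d=0$. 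For (2): $\kk[\Delta]$ is Cohen--Macaulay and standard graded, so $\gr_\m(R)\cong R$; modulo a general linear system of parameters $J$ the ring $R/J$ is Artinian with Hilbert function the $h$-vector, hence $r_J(\m)$ is the top degree of $\h(\lambda)$, and since $\gr_\m(R)$ is Cohen--Macaulay this is the reduction number of every minimal reduction, so $r(\m)=\deg\h(\lambda)$; thus (2) holds iff $\deg\h(\lambda)\le d-1$, i.e.\ iff $h_d=0$. (If $\kk$ is finite, one first passes to $\kk(t)$, which changes none of these invariants.) This gives $(1)\Leftrightarrow(2)\Leftrightarrow(4)$.

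\emph{The equivalence of (3).} Setting $s_1=\cdots=s_d=1$ in (3) and using $(\underline I^{S_i})^{*}=\m^{d-1}$, $(\underline I^S)^{*}=\m^{d}$ shows that (3) implies $(a_1,\dots,a_d)\m^{d-1}=\m^{d}$; since $(a_1,\dots,a_d)\subseteq\m$ this says $(a_1,\dots,a_d)$ is a reduction of $\m$ with reduction number $\le d-1$, so $r(\m)\le d-1$, giving $(3)\Rightarrow(2)$. For $(2)\Rightarrow(3)$: since $\gr_\m(R)\cong R$ is reduced, $\ov{\m^{k}}=\m^{k}$ for all $k$, and since each $I_i$ is generated by a linear system of parameters it is a reduction of $\m$, so $\ov{I_i}=\m$. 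Hence $\ov{\prod_j I_j}=\ov{\prod_j\ov{I_j}}=\ov{\m^{d}}=\m^{d}$; as $(a_1,\dots,a_d)$ is a joint reduction of $(I_1,\dots,I_d)$, the ideal $\sum_i a_i\prod_{j\ne i}I_j$ is a reduction of $\prod_j I_j$, and the inclusions $\sum_i a_i\prod_{j\ne i}I_j\subseteq(a_1,\dots,a_d)\m^{d-1}\subseteq\m^{d}=\ov{\sum_i a_i\prod_{j\ne i}I_j}$ force $(a_1,\dots,a_d)\m^{d-1}$ to be a reduction of $\m^{d}$, i.e.\ $(a_1,\dots,a_d)$ is a joint reduction of the $d$-tuple $(\m,\dots,\m)$, hence a minimal reduction of $\m$. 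Since $\gr_\m(R)$ is Cohen--Macaulay, every minimal reduction of $\m$ has reduction number $r(\m)$, which by (2) is $\le d-1$; thus $(a_1,\dots,a_d)\m^{d-1}=\m^{d}$, and multiplying by $\m^{\,s-d}$ (where $s=s_1+\cdots+s_d$) gives $(a_1,\dots,a_d)\m^{\,s-1}=\m^{s}$. Rewriting $\m^{\,s-1}=(\underline I^{S_i})^{*}$ and $\m^{s}=(\underline I^S)^{*}$ turns this into (3).

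\emph{Main obstacle.} The delicate point will be $(2)\Rightarrow(3)$, and within it the passage from ``$r(\m)\le d-1$'' to ``the given arbitrary joint reduction $(a_1,\dots,a_d)$ satisfies $(a_1,\dots,a_d)\m^{d-1}=\m^{d}$''. Two inputs carry the weight and must be justified carefully: first, that the joint reduction of $(I_1,\dots,I_d)$ is actually a minimal reduction of $\m$, which relies essentially on $\ov{I_1}=\cdots=\ov{I_d}=\m$; and second, that in the Cohen--Macaulay standard graded ring $\kk[\Delta]$ the reduction number of $\m$ is independent of the choice of minimal reduction. A more self-contained route for the latter is to pass to leading forms in $\gr_\m(R)\cong R$: the leading forms of $a_1,\dots,a_d$ constitute a linear system of parameters of $R$, so $r_{(a_1,\dots,a_d)}(\m)$ equals the top nonvanishing degree of $R$ modulo this l.s.o.p., namely $\deg\h(\lambda)$, which is $\le d-1$ exactly when $h_d=0$.
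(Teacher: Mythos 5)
Your proof is correct and follows the paper's overall strategy (reduce all four conditions to the single numerical statement $h_d=0$), but it differs in two noteworthy ways. For $(1)\Leftrightarrow(2)$, the paper cites Marley's theorem relating the postulation number to the reduction number; you instead argue directly that in the Cohen--Macaulay standard graded ring $\kk[\Delta]\cong\gr_\m(R)$ the reduction number of $\m$ (with respect to any linear minimal reduction) equals $\deg\h(\lambda)$, so $r(\m)\le d-1$ iff $h_d=0$ --- a more self-contained route that avoids an external citation. For $(2)\Leftrightarrow(3)$, the paper presents a terse chain of biconditionals and in particular passes from ``$r(\m)\le d-1$'' to ``$(a_1,\dots,a_d)\m^{|S|-1}=\m^{|S|}$ for all $s_i\ge 1$'' without comment; this direction actually requires knowing that the specific $d$-tuple $(a_1,\dots,a_d)$ generates a minimal reduction of $\m$ \emph{and} that the reduction number is independent of the chosen minimal reduction. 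You make both points explicit: using $\ov{I_i}=\m$ you show $(a_1,\dots,a_d)\m^{d-1}$ is a reduction of $\m^d$, hence $(a_1,\dots,a_d)$ is a minimal reduction of $\m$, and you then appeal to the Cohen--Macaulayness of $\gr_\m(R)$ to conclude its reduction number is $r(\m)$. This is a genuine improvement in rigor over the paper's presentation. The $(1)\Leftrightarrow(4)$ argument (via $e_d^*(J_S)=h_d$ and the alternating binomial sum $(-1)^{d-1}h_d$) matches the paper's.
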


\begin{proof}
	$(1\Leftrightarrow 2)$ We know that $\Delta$ is Eulerian if and only if the Hilbert function and the Hilbert polynomial of $\kk[\Delta]$ agree for all $n\ge 0$. This is true if and only if the postulation number of $\kk[\Delta]$ is $\le -1$. Using Marley's result \cite[Corollary 3.8]{MarleyThesis}, as $\kk[\Delta]$ is Cohen-Macaulay, it is true if and only if reduction number of $\kk[\Delta]$ is $\le d-1$. \\
	$(2\Leftrightarrow 3)$ Under the given conditions, 
	\begin{align*}
	& \ a_1(\underline{I}^{S_1})^*+a_2(\underline{I}^{S_2})^*+\cdots +a_d(\underline{I}^{S_d})^*=(\underline{I}^S)^*, \text{ for all } s_1,s_2,\dots ,s_d\ge 1\\ 
	\Leftrightarrow 
	& \ a_1\m^{|S_1|}+a_2\m^{|S_2|}+\cdots +a_d\m^{|S_d|}=\m^{|S|}, \text{ for all } s_1,s_2,\dots ,s_d\ge 1\\
	\Leftrightarrow 
	& \ (a_1,a_2,\dots ,a_d)\m^{|S|-1}=\m^{|S|}, \text{ for all } s_1,s_2,\dots ,s_d\ge 1\\
	\Leftrightarrow 
	& \ r(\m)\le d-1.
	\end{align*}
	$(1\Leftrightarrow 4)$ As $e^*_d(I^rJ^s)=h_d$, for any ideals $I,J$ generated by linear systems of parameters and for any $r,s\in \mathbb{N}$, we get
	\begin{align*}
	& \ e^*_d(I_1\cdots I_d)
	-\sum_{i=1}^{d}\left[e^*_d(\prod_{j\not=i}I_j)\right]+\sum_{1\le p<q\le d}\left[e^*_d(\prod_{j\not=p,q}I_j)\right]-\cdots +(-1)^{d-1} \sum_{i=1}^{d}e^*_d(I_i)=0\\
	\Leftrightarrow & \ h_d-dh_d+{d \choose 2}h_d-\cdots +(-1)^{d-1}dh_d=0\\
	\Leftrightarrow & \ \left[1-{d \choose 1}+{d \choose 2}-\cdots + (-1)^{d-1}{d \choose 1} + (-1)^d+(-1)^{d-1}\right]h_d=0\\
	\Leftrightarrow & \ (-1)^{d-1}h_d=0\\
	\Leftrightarrow & \ h_d=0\\
	\Leftrightarrow & \ \chi(\Delta)=1.
	\end{align*}
\end{proof}

\begin{Remark}{\rm
	We observe here that if $I_1, I_2,\dots, I_g$ are the ideals generated by linear systems of parameters in $R=\kk[\Delta]$ and  $s_1,s_2,\dots, s_g\in \mathbb{N}$, then $\m^{s_1+s_2+\cdots+s_g} = \ov{I_1^{s_1}I_2^{s_2}\cdots I_g^{s_g}}$ irrespective of the characteristic. For all $i=1,\dots,g$, $I_i$ is a reduction of $\m$ implies that $\ov{I^{s_i}_i} = \ov{\m^{s_i}} = \m^{s_i}$. This implies that 
	\[ \ov{I_1^{s_1}I_2^{s_2}\cdots I_g^{s_g}} = \ov{\ov{I_1^{s_1}}\ \ov{I_2^{s_2}}\cdots \ov{I_g^{s_g}}} = \ov{\m^{s_1+s_2+\cdots+s_g}} = \m^{s_1+s_2+\cdots+s_g}. \]
	Thus in case of integral closure, calculations in the Theorem \ref{faceringpoly} are independent of the characteristic. In particular, $\ov{e}_d (I_1^{s_1}I_2^{s_2}\cdots I_g^{s_g}) = h_d$, for any $s_1,s_2,\dots,s_g\in \mathbb{N}$. Hence, the above proposition is true when tight closure is replaced by integral closure.
}\end{Remark}

\bibliographystyle{plain}
\bibliography{gmvrevised}     

\end{document}